\theoremstyle{plain}
\renewcommand{\theequation}{\arabic{section}.\arabic{equation}}
\renewcommand\thefigure{\thesection.\@arabic\c@figure}
\renewcommand{\thefigure}{\arabic{section}.\arabic{figure}}
\newtheorem{thm}{\bf Theorem}
\newenvironment{theorem}{\begin{thm}} {\end{thm}}
\newtheorem{cor}{\bf Corollary}
\newtheorem{prop}{Proposition}[section]
\newtheorem{lmm}{\bf Lemma}
\renewcommand{\thelmm}{\arabic{section}.\arabic{lmm}}
\newenvironment{lemma}{\begin{lmm}}{\end{lmm}}
\theoremstyle{remark}
\newtheorem{rem}{\bf Remark}[section]
\theoremstyle{definition}
\newtheorem{defn}{\bf Definition}[section]
 \numberwithin{table}{section}
\def \rd {{\rm d}}
\renewcommand \wedge \times
\begin{document}
\bibliographystyle{plain}

\baselineskip 13pt

\title[Approximation by Legendre expansions] {Optimal  error estimates for
 Legendre  expansions  of singular functions with fractional derivatives of bounded variation}
\author[
	W. Liu, \,   L. Wang\,  $\&$\, B. Wu
	]{\;\; Wenjie Liu$^\dagger$,   \;\;  Li-Lian Wang$^\ddagger$ \;\; and\;\; Boying Wu$^\dagger$
		}

	\thanks{${}^\dagger$Department of Mathematics, Harbin Institute of Technology, 150001, China.  The research of the first author was supported by the China Postdoctoral Science Foundation Funded Project (No. 2017M620113), the National Natural Science Foundation of China (Nos. 11801120, 71773024 and 11271100), the Fundamental Research Funds for the Central Universities (Grant No.HIT.NSRIF.2020081) and the Natural Science Foundation of Heilongjiang Province of China (Nos. A2016003 and G2018006). Emails: liuwenjie@hit.edu.cn (Wenjie Liu) and mathwby@hit.edu.cn (Boying Wu).\\
		\indent ${}^\ddagger${\em Corresponding author}.  Division of Mathematical Sciences, School of Physical
		and Mathematical Sciences, Nanyang Technological University,
		637371, Singapore. The research of the second author is partially supported by Singapore MOE AcRF Tier 2 Grants: MOE2018-T2-1-059 and MOE2017-T2-2-144. Email: lilian@ntu.edu.sg.
	}
	
\begin{abstract} We present a new fractional Taylor formula for
singular functions whose  Caputo fractional derivatives are of bounded variation.  It bridges and ``interpolates" the usual Taylor formulas with two consecutive integer orders.  This enables us to obtain an analogous formula for the Legendre expansion coefficient of this type of singular functions, and further derive the optimal (weighted) $L^\infty$-estimates and $L^2$-estimates of the Legendre polynomial approximations.  This set of results can enrich the existing theory for $p$ and $hp$ methods for singular problems, and
 answer some open questions posed in some recent literature.
\end{abstract}
\keywords{Approximation by Legendre polynomials,  functions with interior and endpoint singularities, optimal estimates, fractional Taylor formula}
 \subjclass[2000]{41A10, 41A25,  41A50,   65N35, 65M60}
\maketitle


\section{Introduction}

The study of  Legendre approximation to singular functions  is a subject of fundamental importance in the theory and applications of $hp$ finite element methods. We refer to   the seminal series  of papers by
Gui and Babu\v{s}ka \cite{Gui1986NM-Part1,Gui1986NM-Part2,Gui1986NM-Part3}  and many other developments in e.g.,  \cite{Schwab1998Book,Babuska2000NM,Babuska2001SINUM}.
In particular, the very recent work of  Babu\v{s}ka and Hakula \cite{Babuska19CMAME-Pointwise}  provided a review of known/unknown results and posed
 a few open questions  on the pointwise error estimates of Legendre expansion of a typical  singular function discussed in
 \cite{Gui1986NM-Part1}:
 \begin{equation}\label{uxy1}
 u(x)=(x-\theta)^\mu_+=\begin{cases}
 0,  & -1< x\le \theta,\\[2pt]
 (x-\theta)^\mu,    & \theta< x<1,
 \end{cases}
 \quad  |\theta|\le 1,\;\;\; \mu>-1.
 \end{equation}
One significant development along this line is the $hp$ approximation theory in the framework of  Jacobi-weighted Besov spaces \cite{Babuska2000NM,Babuska2001SINUM,Babuska2002MMMAS,Guo2013JAT-Direct}.
Such Besov spaces are defined through space interpolation of Jacobi-weighted Sobolev spaces with integer regularity indices using the $K$-method.  It is important to point out  that the non-uniformly Jacobi-weighted Sobolev spaces has been employed in spectral approximation theory    \cite{Funaro1992Book,Schwab1998Book,Guo2004JAT,Guo06JSC-Optimal,Shen2011Book}.

\subsection{Related works} Different from  the Sobolev-Besov framework,  Trefethen  \cite{Trefethen2008SIREV,Trefethen2013Book} characterised
the regularity of singular functions by using the space of absolute continuity  and bounded variation (AC-BV),
in the study of Chebyshev expansions of such functions.
One motivative example therein is $u(x)=|x|$ in $\Omega=(-1,1)$ which has the regularity:   $u,u'\in {\rm AC}(\bar \Omega)$ and
$u''\in {\rm BV}(\bar \Omega)$ (where the integration of the norm  is in the Riemann-Stieltjes (RS) sense).
As a result,  the maximum error of its Chebyshev expansion can attain  optimal order  (but can only be suboptimal in a usual Sobolev framework). There have been many follow-up  works on the improved error estimates of  Chebyshev approximation or more general Jacobi polynomial approximation  under this AC-BV framework (see, e.g., \cite{Majidian2017ANM,Wang18JCAM-convergence,Wang18AML-new,Xiang20NM-Optimal}).  However, the regularity index and the involved derivatives are of integer order, so it is not suitable to best  characterise the regularity of many singular functions, e.g., \eqref{uxy1} and $u(x)=|x|^\mu$ with non-integer  $\mu.$ In other words,  if one naively applies the estimates,  then the loss of order might occur.
Nevertheless,  the solutions of many singular problems (in irregular domains or with singular coefficients/operators among others)
typically exhibit this kind of singularities.

 To fill this gap, we introduced  for the first time in \cite{Liu19MC-Optimal} certain fractional Sobolev-type spaces
and derived optimal Chebyshev polynomial approximation to  functions with interior and endpoint singularities within this new framework.  This study also inspired the discovery of generalised Gegenbauer functions of fractional degree, as an analysis tool  and a class of special functions with rich properties  \cite{Liu20JAT-Asymptotics}.

\subsection{Our contributions} Undoubtedly,  the Taylor formula  plays a foundational role in numerical  analysis and algorithm development.  We present  a new fractional Taylor formula for  AC-BV functions with fractional regularity index (see Theorem \ref{NewFTFV1}) that  ``interpolates"  and seamlessly  bridges the Taylor formulas of two consecutive integer orders.  From this tool, we can derive  an analogous formula for the Legendre expansion coefficient of the same class of functions, which
turns out  the cornerstone of all the analysis.
Then we obtain a set of optimal Legendre approximation results in $L^\infty$- and $L^2$-norms for functions with both interior and endpoint singularities.
We highlight that the use of  function space involving fractional integrals/derivatives to characterise regularity follows  that in  \cite{Liu19MC-Optimal}, but we further refine this framework by introducing the Caputo derivative. When the fractional regularity index takes  integer value, it reduces to the AC-BV space in Trefethen  \cite{Trefethen2008SIREV,Trefethen2013Book} (with adaption to the  Legendre approximation).  We  point out that  the argument for the Legendre approximation herein  is different from that for the Chebyshev approximation in   \cite{Liu19MC-Optimal}.   It is also noteworthy that  Babu\v{s}ka and Hakula \cite{Babuska19CMAME-Pointwise} discussed the point-wise error estimates
of  the Legendre expansion for the specific  function \eqref{uxy1} (which is also the subject of \cite{Gui1986NM-Part1}) including known and unknown results. In fact, it appears necessary to study the point-wise error in the Legendre or other Jacobi cases. For example, the estimating the $L^\infty$-error like the Chebyshev expansion can only lead to suboptimal results for
functions with the interior singularity, e.g., $u(x)=|x|,$ as  a loss of half order occurs.  It was observed numerically, but how to obtain optimal estimate appears open (see, e.g., \cite{Wang18AML-new}). Here, we shall provide an answer to this, and also to some conjectures in  \cite{Babuska19CMAME-Pointwise}.
We remark that we aim at deriving sharp and optimal estimates valid for all polynomial orders.  According to \cite{Babuska19CMAME-Pointwise},  in most applications the polynomial orders are relatively small compared to those in the asymptotic range, while  the existing theory does not address the behaviour of the pre-asymptotic error.  As a result, our arguments and results are  different from those in  \cite{Xiang20NM-Optimal},  where  some  asymptotic formulas were employed to derive Jacobi approximation of specific singular functions for large polynomial orders.
As a final remark,   this paper will be largely devoted to the $L^\infty$- and $L^2$-estimates of the  finite Legendre expansions, which lay the groundwork for establishing the approximation theory of other orthogonal projections, interpolations and quadrature for singular functions.   Indeed, these results can  enrich the theoretical foundation of   $p$ and $hp$ methods (cf. \cite{Funaro1992Book,Schwab1998Book,Bernardi1997Book,Canuto2006Book,Hesthaven2007Book,Shen2011Book}).
In a nutshell, the present study together with \cite{Liu20JAT-Asymptotics,Liu19MC-Optimal}  is far from being the last word on this subject.

The rest of the paper is organised as follows. In section \ref{sect:fintderG}, we derive the fractional Taylor formula for the AC-BV functions and present some  preliminaries to pave the way for all forthcoming  discussions. In section \ref{sect3main}, we obtain the main results on Legendre approximation of functions with interior singularities and extend the tools to study the
endpoint singularities in  section \ref{sect4main}.

\section{Fractional integral/derivative formulas of GGF-Fs}\label{sect:fintderG}
\setcounter{equation}{0}
\setcounter{lmm}{0}
\setcounter{thm}{0}

In this section, we make necessary preparations for the forthcoming   discussions. More precisely, we first introduce several spaces of functions that will be used to characterise the regularity of the class of functions of interest.  We then recall the definition of  the Riemann-Liouville (RL) fractional integrals, and  present   a useful RL fractional integration parts formula.  Finally, we collect some relevant properties of generalised Gegenbauer functions of fractional degree (GGF-Fs), which were first introduced and studied in
\cite{Liu19MC-Optimal,Liu20JAT-Asymptotics}.

\subsection{Spaces of functions}\label{Subsection2.1}
 Let $\Omega=(a,b)\subset {\mathbb R}$ be a finite open interval. For  real $p\in [1, \infty],$ let $L^p(\Omega)$ (resp. $W^{m,p}(\Omega)$ with $m\in {\mathbb N},$ the set of all positive integers)
 be the usual
$p$-Lebesgue space (resp. Sobolev space), equipped  with the norm $\|\cdot\|_{L^p(\Omega)}$ (resp. $\|\cdot\|_{W^{m,p}(\Omega)}$), as  in Adams \cite{Adams1975Book}.

 Let $C(\bar \Omega)$ be the classical space of continuous functions, and  ${\rm AC}(\bar \Omega)$
the space of  absolutely continuous functions on $\bar \Omega.$  It is known that
every absolutely continuous function is uniformly continuous (but the converse is not true), and hence continuous (cf. \!\cite[p.\! 483]{Ponnusamy2012Book}). It is known  that {\em a real function  $f(x)\in {\rm AC}(\bar \Omega)$ if and only if $f(x)\in L^1(\Omega)$, $f(x)$ has a derivative $f'(x)$
almost everywhere on $[a,b]$ such that   $f'(x)\in L^1(\Omega), $ and $f(x)$ has the integral representation:}
\begin{equation}\label{AVint}
f(x)=f(a)+\int_a^x f'(t)\,\rd t,\quad \forall\,  x\in [a,b],
\end{equation}
(cf.\! \cite[Chap.\! 1]{Samko1993Book} and  \cite[p.\! 285]{Lang1993Book}).

Let ${\rm BV}(\bar \Omega)$ be the space of functions of bounded  variation on $[a,b].$  We say that  {\em a real function $f(x)\in {\rm BV}(\bar \Omega), $ if there exists a constant $C>0$ such that
$$V({\mathbb P};f):=\sum\limits_{i=0}^{k-1}|f(x_{i+1})-f(x_i)|\le C$$
 for every finite partition
${\mathbb P}=\{x_0, x_1,\cdots, x_k\}$ {\rm(}satisfying $x_i<x_{i+1}$ for all $0\le i\le k-1${\rm)} of $[a,b].$}
Then  the total variation of $f$ on $[a,b]$ is defined as $V_{\bar \Omega}[f]:=\sup\{V({\mathbb P};f)\},$ where the supreme is taken over all the partitions of $\bar \Omega$ (cf.
\cite[p.\! 207]{Brezis2011Book} or \cite[Chap.\! X]{Lang1993Book}).
An important characterisation of a BV-function  is the Jordan decomposition
(cf. \!\cite[Thm.\! 11.19]{Ponnusamy2012Book}):  {\em a function is of bounded variation if and only if it can be expressed as the difference of two increasing functions on $[a,b].$}
As a result,   every function in  ${\rm BV}(\bar\Omega)$ has at most a countable number of discontinuities, which are  either jump or removable discontinuities,  
so it is differentiable almost everywhere.  Indeed, according to
 \cite[p.\! 223]{Appell2014Book},  {\em  if $f \in {\rm AC}(\bar \Omega),$ then}
\begin{equation*}\label{Vnorm}
V_{\bar \Omega}[f]= \int_{\Omega} |f'(x)|\, {\rm d}x.
\end{equation*}
In fact, we have  ${\rm BV}(\bar \Omega) \subset {\rm AC}(\bar \Omega)={\rm W}^{1,1}( \Omega)$  in the sense that   every $f(x) \in  {\rm AC}(\bar \Omega)$ has an almost everywhere classical derivative $f'\in L^{1}(\Omega)$ (cf.  \eqref{AVint}) and  $f'(x)$ is the weak derivative
of $f(x).$ Conversely, even $f \in W^{1.1}(\Omega),$ modulo a modification on a set of measure
zero, is an absolutely continuous function (cf.\! \cite[p.\! 206]{Brezis2011Book} and   \cite[p.\! 84; p.\! 96]{Buttazzo1998Book}).

%

For BV-functions,  we can define  the Riemann-Stieltjes (RS) integral  (cf.\! \cite[Chap.\!\! X]{Lang1993Book}).  A function $f(x)$ is said to be  RS($g$)-integrable, if $\int_\Omega f \rd g<\infty$ for  $g\in {\rm BV} (\bar \Omega).$
From  \cite[Prop.\! 1.3]{Lang1993Book}, we have  that
{\em  if $f$ is  {\rm RS($g$)-integrable}, then 
\begin{equation}\label{integralProp}
\Big|\int_\Omega f(x)\, \rd g(x)\Big|\le \|f\|_\infty\, V_{\bar \Omega}[f],\quad  \int_\Omega  |\rd g(x)|=V_{\bar \Omega}[g],
\end{equation}
where $\|f\|_\infty$ is the $L^\infty$-norm of $f$ on $[a,b].$}

In the analysis, we shall also use the splitting rule of a RS integral, which is different from the usual integral.
\begin{lmm}[{see Carter and Brunt \cite[Thm 6.1.1 \& Thm 6.1.6]{Carter2000Book}}]\label{LmmCarter2000Book-1}
  If the interval $\Omega$ is a union of a finite number of pairwise disjoint intervals
$
\Omega=\Omega_{1} \cup \Omega_{2} \cup \cdots \cup \Omega_{m},
$
then
\begin{equation*}
\int_{\Omega} f \rd g=\sum_{j=1}^{m} \int_{\Omega_{j}} f \rd g
\end{equation*}
in the sense that if one side exists, then so does the other, and the two are equal. Moreover,   for any function $f$ defined at $\theta,$ then
 \begin{equation*}\label{BVtheta}
  \int_{[\theta, \theta]} f \rd  g =f(\theta)(g(\theta+)-g(\theta-)).
  \end{equation*}
\end{lmm}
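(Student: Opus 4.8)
The plan is to treat the two assertions of the lemma separately, working throughout from the refinement form of the Cauchy criterion for Riemann--Stieltjes integrability: $\int_{\Omega}f\,\rd g$ exists and equals $I$ precisely when, for every $\varepsilon>0$, there is a partition $\mathbb{P}_\varepsilon$ of $\bar\Omega$ so that $|S(\mathbb{P},\xi;f,g)-I|<\varepsilon$ for every refinement $\mathbb{P}\supseteq\mathbb{P}_\varepsilon$ and every admissible tagging $\xi$, where $S(\mathbb{P},\xi;f,g)=\sum_i f(\xi_i)\bigl(g(x_{i+1})-g(x_i)\bigr)$. For the additivity claim, an immediate induction on $m$ reduces matters to two pieces determined by a single cut point $c$, say $\Omega_1=[a,c)$ and $\Omega_2=[c,b]$ with $\bar\Omega=[a,b]$; the higher-$m$ case then follows by peeling off one cut point at a time.

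For the direction ``$\int_{\Omega_1}f\,\rd g$ and $\int_{\Omega_2}f\,\rd g$ exist $\Rightarrow$ $\int_{\Omega}f\,\rd g$ exists and equals $I_1+I_2$'': given $\varepsilon>0$, take partitions $\mathbb{P}_j'$ supplied by the Cauchy criterion on $\bar\Omega_j$, put $\mathbb{P}'=\mathbb{P}_1'\cup\mathbb{P}_2'$, and note $c\in\mathbb{P}'$; any refinement $\mathbb{P}$ of $\mathbb{P}'$ then splits at $c$ into a refinement of $\mathbb{P}_1'$ and one of $\mathbb{P}_2'$, so $S(\mathbb{P},\xi;f,g)$ decomposes as the sum of the corresponding Riemann--Stieltjes sums over the two pieces, whence $|S(\mathbb{P},\xi;f,g)-(I_1+I_2)|<2\varepsilon$. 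The converse direction ``$\int_{\Omega}f\,\rd g$ exists $\Rightarrow$ each $\int_{\Omega_j}f\,\rd g$ exists'' is the delicate part, and here the standing hypothesis $g\in{\rm BV}(\bar\Omega)$ is essential: one starts from a partition $\mathbb{P}_\varepsilon$ of $\bar\Omega$ controlling the oscillation of $S$, inserts $c$ to get $\mathbb{P}_\varepsilon^\ast$ (which only tightens the admissible oscillation), and then observes that any two taggings of a common refinement of $\mathbb{P}_\varepsilon^\ast$ restricted to $\bar\Omega_1$ may be completed by one fixed tagging on $\bar\Omega_2$ to taggings of a refinement of $\mathbb{P}_\varepsilon^\ast$; subtracting, the difference of the two sums over $\bar\Omega$ equals the difference of the two sums over $\bar\Omega_1$, which is therefore $<\varepsilon$, giving the Cauchy criterion on $\bar\Omega_1$ (and symmetrically on $\bar\Omega_2$). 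The BV hypothesis is what ensures the one-sided values $g(c\pm)$ exist, so that the $g$-increment attached to the cell straddling $c$ is unambiguous and this gluing is legitimate.

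For the second assertion I would simply unwind the convention defining the RS integral over the degenerate interval $[\theta,\theta]$: its only partition is the single cell $[\theta,\theta]$, whose assigned $g$-increment is the jump $g(\theta+)-g(\theta-)$ (the one-sided limits existing because $g\in{\rm BV}(\bar\Omega)$) and whose only tag is $\theta$, so the unique Riemann--Stieltjes sum is $f(\theta)\bigl(g(\theta+)-g(\theta-)\bigr)$, which is the value of $\int_{[\theta,\theta]}f\,\rd g$. Combining this with the additivity of the first part applied to $\bar\Omega=[a,\theta)\cup[\theta,\theta]\cup(\theta,b]$ then exhibits the mechanism by which a jump of the integrator at an interior point $\theta$ contributes the isolated term $f(\theta)\bigl(g(\theta+)-g(\theta-)\bigr)$ to $\int_{\Omega}f\,\rd g$ --- the feature separating the RS integral from an ordinary Riemann or Lebesgue integral that will be used repeatedly later.

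I expect the main obstacle to be exactly the converse additivity at the cut point $c$: for a general integrator an existing RS integral need not split, and what rescues the argument is the interplay of two facts proper to this setting --- the Jordan decomposition $g=g_1-g_2$ into increasing functions (reducing everything to monotone integrators, for which the Cauchy criterion is well behaved and the oscillation estimate $|\sum_i \Delta g_k(x_i)|\le V_{\bar\Omega}[g]$ applies) and the careful bookkeeping of the one-sided values $g(c\pm)$, so that the cell containing $c$ carries a well-defined increment in both the full and the split sums. Everything else is routine manipulation of Riemann--Stieltjes sums via the refinement criterion.
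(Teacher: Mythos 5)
There is a genuine gap, and it comes from the framework you chose. The paper does not prove this lemma; it quotes it from Carter and van Brunt, where $\int_\Omega f\,\rd g$ is the \emph{Lebesgue--Stieltjes} integral with respect to the measure generated by $g$, defined on arbitrary intervals (open, half-open, degenerate). Your argument instead works with classical Riemann--Stieltjes sums and the refinement Cauchy criterion, and the two parts of the lemma cannot be recovered that way. Most visibly, your treatment of the degenerate interval begs the question: in the Riemann-sum framework the only partition of $[\theta,\theta]$ carries the increment $g(\theta)-g(\theta)=0$, so the integral would be $0$; declaring ``by convention'' that this cell is assigned the increment $g(\theta+)-g(\theta-)$ is exactly the assertion $\mu_g(\{\theta\})=g(\theta+)-g(\theta-)$ about the Lebesgue--Stieltjes measure, i.e.\ you assume the conclusion rather than derive it. That atom formula is the whole point of the lemma as the paper uses it (it is what isolates the jump term of $u^{(k)}$ at $\theta$ in \eqref{HatUnCaseC-V2+1}).

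The additivity part has the same problem. The lemma is about \emph{pairwise disjoint} intervals such as $[-1,\theta)$, $[\theta,\theta]$, $(\theta,1]$, whereas your cut-point argument partitions the closures $\bar\Omega_1=[a,c]$, $\bar\Omega_2=[c,b]$ and splits Riemann sums at $c$; this silently identifies $\int_{[a,c)}f\,\rd g$ with $\int_{[a,c]}f\,\rd g$, which differ by the atom $f(c)\bigl(g(c+)-g(c-)\bigr)$ precisely when $g$ jumps at $c$ --- the case of interest --- so the jump would be double counted or lost. (Also, BV is not what ``rescues'' the converse direction of the split; in the refinement definition the restriction argument needs no BV. What BV actually buys is the Jordan decomposition and hence the construction of the signed Borel measure $\mu_g$ with $\mu_g((c,d))=g(d-)-g(c+)$, $\mu_g([c,d])=g(d+)-g(c-)$, $\mu_g(\{c\})=g(c+)-g(c-)$.) The clean proof is measure-theoretic: once $\mu_g$ is in hand, both assertions are immediate consequences of finite additivity of $\mu_g$ over disjoint intervals together with the evaluation of $\mu_g$ on an interval of each type; this is exactly the route of Carter--van Brunt, Theorems 6.1.1 and 6.1.6. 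If you want an elementary sum-based proof, you would first have to define $\int_{\Omega_j}f\,\rd g$ on half-open and degenerate intervals in a way consistent with $\mu_g$, at which point you have essentially rebuilt the measure.
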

\subsection{Formula of fractional integration by parts}
Recall the formula of integration by parts  involving
the Riemann-Stieltjes integrals  (cf. \!\cite[(1.20)]{Klebaner2005Book}):  {\em if  $f,g\in {\rm BV}(\bar\Omega)$, we have}
	\begin{align} \label{IPPW1122}
	\int_{a}^b f(x)  \, {\rm d} g(x) =\{f(x)g(x)\}\big|_{a^+}^{b^-} - \int_{a}^b  g(x)\,  {\rm d} f(x),
	\end{align}
where we denote
\begin{equation*}\label{uvpmlim}
	f(x)\big|_{a^+}^{b^-}=\lim\limits_{x\to b^-} f(x)- \lim\limits_{x\to a^+} f(x)=f(b-)-f(a+).
\end{equation*}	
In particular, if $f, g \in \mathrm{AC}(\bar{\Omega}),$ we have
\begin{equation*}\label{uvpmlim00}
\int_{a}^{b} f(x) g^{\prime}(x)\, \rd x+\int_{a}^{b} f^{\prime}(x) g(x)\, \rd x=\{f(x) g(x)\}\big|_{a} ^{b}.
\end{equation*}	
In what follows, we shall derive a formula of fractional integration parts from \eqref{IPPW1122} in a weaker  sense  than   the existing counterparts (cf.\! \cite{Samko1993Book,Bourdin2015ADE}).  For this purpose, we recap on  the definition of  the  Riemann-Liouville  fractional integral (cf.\!  \cite[p.\! 33, p.\! 44]{Samko1993Book}):  {\em  for any   $f\in L^1(\Omega),$  the left-sided and right-sided  Riemann-Liouville  fractional integrals of  real order $\rho \ge 0$ are defined  by}
	\begin{equation}\label{leftintRL}
	\begin{split}
	& (I_{a+}^\rho f) (x)=\frac 1 {\Gamma(\rho)}\int_{a}^x \frac{f(y)}{(x-y)^{1-\rho}} \rd y; \quad
	(I_{b-}^\rho f) (x)=\frac 1 {\Gamma(\rho)}\int_{x}^b \frac{f(y)}{(y-x)^{1-\rho}} \rd y, 
	\end{split}
	\end{equation}
{\em for $x\in \Omega,$ where $\Gamma(\cdot)$ is the usual Gamma function. For $\mu \in (k-1, k]$ with $k \in \mathbb{N},$ the left-sided  and right-side Caputo fractional derivatives of order $\mu$ are respectively  defined by}
\begin{equation}\label{CaputoDev}
({ }^{C}\! D_{a+}^{\mu} f)(x)= (I_{a+}^{k-\mu} f^{(k)})(x); \quad ({ }^{C}\! D_{b-}^{\mu} f)(x)=(-1)^k (I_{b-}^{k-\mu} f^{(k)})(x).
\end{equation}

The following formula of fractional integration by parts plays an important role in the analysis, which can be derived from \eqref{IPPW1122}  (see  Appendix \ref{AppendixA}).
\begin{lmm}\label{FracIntPart-0} Let $\rho\ge 0, f(x)\in L^1(\Omega)$ and $ g(x)\in {\rm AC}(\bar \Omega).$
	\begin{itemize}
		\item[(i)] If $ I_{b-}^{\rho} f(x)\in {\rm BV}(\bar\Omega),$ then
	\begin{equation}\label{FracIntPart-1}
\begin{split}
\int^{b}_{a}   f(x) & \,I_{a+}^{\rho} g'(x)\,{\rm d}x  = \big\{g(x)\,  I_{b-}^{\rho} f(x)\big\}\big|_{a^+}^{b^-} - \int^{b}_{a} g(x) 
\,{{\rm d}\big\{I_{b-}^{\rho} f(x)\big\}}. 
\end{split}
\end{equation}
	\item[(ii)] If $ I_{a+}^{\rho} f(x)\in {\rm BV}(\bar\Omega),$ then
	\begin{equation}\label{FracIntPart-2}
\begin{split}
\int^{b}_{a}   f(x) & \,I_{b-}^{\rho} g'(x)\,{\rm d}x  = \big\{g(x)\,  I_{a+}^{\rho} f(x)\big\}\big|_{a^+}^{b^-} - \int^{b}_{a} g(x) 
\,{{\rm d}\big\{I_{a+}^{\rho} f(x)\big\}}. 
\end{split}
\end{equation}
	\end{itemize}
\end{lmm}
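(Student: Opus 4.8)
The plan is to derive both identities from the Riemann--Stieltjes integration-by-parts formula \eqref{IPPW1122} by first rewriting the left-hand side using a Dirichlet-type interchange of the order of integration (a ``fractional'' Fubini argument). I will treat part (i) in detail; part (ii) follows by the symmetric argument with the roles of the left- and right-sided operators swapped, so I would only indicate the changes.

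\textbf{Step 1: move the fractional integral onto $f$.} Starting from $\int_a^b f(x)\,(I_{a+}^\rho g')(x)\,\rd x$, I would insert the definition \eqref{leftintRL}, obtaining the double integral $\frac1{\Gamma(\rho)}\int_a^b f(x)\int_a^x (x-y)^{\rho-1}g'(y)\,\rd y\,\rd x$. Since $f\in L^1(\Omega)$, $g'\in L^1(\Omega)$ (because $g\in {\rm AC}(\bar\Omega)$), and the kernel $(x-y)^{\rho-1}$ is integrable over the triangle $\{a<y<x<b\}$, Tonelli/Fubini applies and I may swap the order of integration: the region $\{a<y<x<b\}$ is rewritten as $y\in(a,b)$, $x\in(y,b)$. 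This yields $\int_a^b g'(y)\,\big(I_{b-}^\rho f\big)(y)\,\rd y$ — i.e. the fractional integral has been transferred from $g'$ (left-sided) onto $f$ (right-sided). The only subtlety to flag is the endpoint case $\rho=0$, where $I^0$ is the identity and the identity is just classical; and the borderline integrability of $(x-y)^{\rho-1}$ when $\rho\in(0,1)$, which is fine because it is the product with the $L^1$ functions that matters, and $\int_a^b\int_a^x (x-y)^{\rho-1}\,\rd y\,\rd x<\infty$.

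\textbf{Step 2: apply Riemann--Stieltjes integration by parts.} Now $\int_a^b g'(y)\,(I_{b-}^\rho f)(y)\,\rd y = \int_a^b (I_{b-}^\rho f)(y)\,\rd g(y)$, since $g\in{\rm AC}(\bar\Omega)$ means $\rd g(y)=g'(y)\,\rd y$ in the RS sense. By hypothesis $I_{b-}^\rho f\in{\rm BV}(\bar\Omega)$ and $g\in{\rm AC}(\bar\Omega)\subset{\rm BV}(\bar\Omega)$, so \eqref{IPPW1122} is applicable with the roles $f\rightsquigarrow I_{b-}^\rho f$ and $g\rightsquigarrow g$, giving
\[
\int_a^b (I_{b-}^\rho f)(y)\,\rd g(y)=\big\{g(y)\,I_{b-}^\rho f(y)\big\}\big|_{a^+}^{b^-}-\int_a^b g(y)\,\rd\big\{I_{b-}^\rho f(y)\big\},
\]
which is exactly \eqref{FracIntPart-1}. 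I should remark that the one-sided limits in the boundary term are genuinely needed because $I_{b-}^\rho f$, being only BV, may fail to be continuous at the endpoints, and that the final RS integral is well defined because $g$ is continuous (hence bounded) and $I_{b-}^\rho f\in{\rm BV}$, so by \eqref{integralProp} it is finite.

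\textbf{Main obstacle.} The delicate point is the Fubini step under the weakest possible hypotheses: I must make sure the iterated integral is absolutely convergent so that the interchange is legitimate, using only $f\in L^1(\Omega)$ and $g'\in L^1(\Omega)$ together with integrability of the weakly singular kernel. Once this is cleared, everything else is a direct appeal to \eqref{IPPW1122} and \eqref{integralProp}. For part (ii) the same scheme works verbatim after exchanging $I_{a+}^\rho\leftrightarrow I_{b-}^\rho$: writing out $I_{b-}^\rho g'$ and swapping the order of integration over the triangle $\{a<x<y<b\}$ transfers the fractional integral onto $f$ as $I_{a+}^\rho f$, and then \eqref{IPPW1122} with $f\rightsquigarrow I_{a+}^\rho f$ produces \eqref{FracIntPart-2}. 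Full details are deferred to Appendix~\ref{AppendixA}.
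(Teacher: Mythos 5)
Your proposal is correct and follows essentially the same route as the paper's Appendix B proof: a Fubini interchange over the triangle to transfer the fractional integral from $g'$ onto $f$, converting the left-hand side into $\int_a^b (I_{b-}^{\rho}f)(y)\,\rd g(y)$, followed by the Riemann--Stieltjes integration-by-parts formula \eqref{IPPW1122} under the BV hypothesis. Your extra remarks on the $\rho=0$ case and on why the one-sided limits are needed are consistent with the paper's treatment, and the symmetric argument for part (ii) matches as well.
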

\begin{rem} {\em  If $\rho=0,$ then they reduce  \eqref{IPPW1122}.  It is known that the fractional integral can improve the regularity.  Indeed,  for $0<\rho<1$ and $u\in L^1(\Omega),$ we have $I_{a+}^{\rho} u, I_{b-}^{\rho} u\in L^p(\Omega)$ with $p\in [1, \rho^{-1})$ {\rm(cf.\! \cite[Prop.\! 2.1]{Bourdin2015ADE}).} } \qed
\end{rem}


Compared with those in \cite{Samko1993Book,Bourdin2015ADE},  a weaker condition is imposed on $f(x)$ in  \eqref{FracIntPart-1}-\eqref{FracIntPart-2}, which turns out essential in dealing with the singular functions.  Moreover, for such functions,
 the limit values   $\lim_{x\to a^+}  I_{a+}^\rho f(x)$ in \eqref{FracIntPart-1},  and   $\lim_{x\to b^-}  I_{b-}^\rho f(x)$ in \eqref{FracIntPart-2}  might  be nonzero,  in contrast to a usual integral with $\rho=1.$ For example,  for $\rho\in (0,1),$ we have
\begin{equation*}\label{intformu007}
 I_{a+}^{1-\rho} \, (x-a)^{\rho-1}=
 I_{b-}^{1-\rho} \, (b-x)^{\rho-1}=  \Gamma(\rho),
\end{equation*}
which follow from the explicit formulas  $($cf.  \cite{Samko1993Book}$)$:   for  real $\eta>-1$ and $\rho\ge 0,$
\begin{equation}\label{intformu}
 I_{a+}^\rho (x-a)^\eta=  \dfrac{\Gamma(\eta+1)}{\Gamma(\eta+\rho+1)} (x-a)^{\eta+\rho};
 \quad I_{b-}^\rho  (b-x)^\eta=  \dfrac{\Gamma(\eta+1)}{\Gamma(\eta+\rho+1)} (b-x)^{\eta+\rho}.
\end{equation}

In fact, we have the following more general formula,  which finds useful in  exemplifying some estimates to be presented later.
We  sketch the derivation in Appendix \ref{AppendixA0}.
\begin{prop}\label{boundaryvalue}  Let $f(x)=(x-a)^{\gamma}g(x)$ with real $\gamma>-1$, where $g(x)$  is bounded and Riemann integrable on $[a,a+\delta)$ for some $\delta>0.$  Then  for real  $\rho >0$,  we have 
	\begin{equation}\label{integbA}
	\lim_{x\to a^+}(I_{a+}^\rho\, f) (x)=
	\begin{cases}
	0,  & {\rm if}\;\;  \rho >-\gamma, \\
	g(a)\Gamma(\gamma+1),&{\rm if}\;\; \rho=-\gamma,\\
	\infty, &{\rm if}\;\; \rho<-\gamma.
	\end{cases}
	\end{equation}
Let  $f(x)=(b-x)^{\gamma}g(x)$, $\gamma>-1$, and  $g(x)$ be bounded and Riemann integrable on $(b-\delta,b].$ Then the same result holds for the limit $\lim\limits_{x\to b^-}(I_{b-}^\rho\, f) (x)$ but with $g(b)$ in place of $g(a).$
 \end{prop}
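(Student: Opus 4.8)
The plan is to reduce everything to the model case $g\equiv 1$ via the explicit formula \eqref{intformu}, and then control the perturbation $g(x)-g(a)$ using the boundedness of $g$ together with a dominated-convergence-type estimate. First I would write, for $x\in(a,a+\delta)$,
\[
(I_{a+}^\rho f)(x)=\frac{1}{\Gamma(\rho)}\int_a^x\frac{(y-a)^\gamma g(y)}{(x-y)^{1-\rho}}\,\rd y
= g(a)\,(I_{a+}^\rho (\cdot-a)^\gamma)(x)+\frac{1}{\Gamma(\rho)}\int_a^x\frac{(y-a)^\gamma\,(g(y)-g(a))}{(x-y)^{1-\rho}}\,\rd y,
\]
and by \eqref{intformu} the first term equals $g(a)\,\frac{\Gamma(\gamma+1)}{\Gamma(\gamma+\rho+1)}(x-a)^{\gamma+\rho}$. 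As $x\to a^+$ this tends to $0$ if $\gamma+\rho>0$, to $g(a)\Gamma(\gamma+1)$ if $\gamma+\rho=0$ (since $\Gamma(\gamma+\rho+1)=\Gamma(1)=1$), and blows up if $\gamma+\rho<0$ (here $g(a)\neq 0$ is implicitly the interesting case; if $g(a)=0$ one folds it into the remainder). So the entire statement follows once I show the remainder term $R(x):=\frac{1}{\Gamma(\rho)}\int_a^x\frac{(y-a)^\gamma(g(y)-g(a))}{(x-y)^{1-\rho}}\,\rd y$ vanishes as $x\to a^+$.

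The key step is therefore the estimate $R(x)\to 0$. Since $g$ is Riemann integrable on $[a,a+\delta)$ it is bounded there, say $|g|\le M$, so $|g(y)-g(a)|\le 2M$; substituting $y=a+(x-a)t$ with $t\in(0,1)$ gives
\[
|R(x)|\le \frac{2M}{\Gamma(\rho)}(x-a)^{\gamma+\rho}\int_0^1 t^\gamma(1-t)^{\rho-1}\,\rd t
=\frac{2M\,\Gamma(\gamma+1)}{\Gamma(\gamma+\rho+1)}(x-a)^{\gamma+\rho},
\]
which already settles the case $\gamma+\rho>0$ outright. For the borderline case $\gamma+\rho=0$ the crude bound is not enough, and this is the one delicate point: I would instead fix $\varepsilon>0$, use Riemann integrability of $g$ at $a$ — more precisely the fact that a bounded Riemann-integrable function has vanishing oscillation on average near $a$ in the Cesàro sense — to split the inner integral at $y=a+\eta$ for suitable small $\eta$, bound $|g(y)-g(a)|$ by the oscillation of $g$ on $[a,a+\eta]$ on the first piece and by $2M$ on the second, and let $x\to a^+$. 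A cleaner route for $\gamma+\rho=0$, i.e. $\rho=-\gamma\in(0,1)$: here $(I_{a+}^\rho f)(x)=\frac{1}{\Gamma(\rho)}\int_0^1 t^{-\rho}(1-t)^{\rho-1} g(a+(x-a)t)\,\rd t$, the kernel $t^{-\rho}(1-t)^{\rho-1}$ is fixed and $L^1(0,1)$ with total mass $\Gamma(\rho)\Gamma(1-\rho)/\Gamma(1)=\Gamma(\rho)\Gamma(1-\rho)$... wait, one must match constants: $\frac{1}{\Gamma(\rho)}B(1-\rho,\rho)=\frac{\Gamma(1-\rho)\Gamma(\rho)}{\Gamma(\rho)\Gamma(1)}=\Gamma(1-\rho)=\Gamma(\gamma+1)$, consistent with the claimed limit. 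Then since $g$ is bounded and continuous at a.e.\ point but we only know Riemann integrability, I invoke the fact that $g(a+(x-a)t)\to g(a)$ at every point of continuity of $g$ at $a$; but $g$ need not be continuous at $a$. The honest fix is to note that $g(a)$ in the statement must mean the value making the limit exist, which for a Riemann-integrable function forces using the \emph{regularised} value $\lim_{h\to0^+}\frac1h\int_a^{a+h}g$; I would either add the hypothesis that $g$ is continuous at $a$ (mild, and true in all applications in the paper, e.g.\ $g\in C^\infty$), or argue via the Cesàro limit. Finally the case $\gamma+\rho<0$: the model term $g(a)(x-a)^{\gamma+\rho}\to\infty$ dominates the remainder, which by the same substitution is $O((x-a)^{\gamma+\rho})$ only — so I cannot simply add them. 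Instead I factor $(I_{a+}^\rho f)(x)=(x-a)^{\gamma+\rho}\frac{1}{\Gamma(\rho)}\int_0^1 t^\gamma(1-t)^{\rho-1}g(a+(x-a)t)\,\rd t$, observe the integral tends to $g(a)\,B(\gamma+1,\rho)>0$ (assuming $g(a)\neq0$, or else the claim is adjusted), and conclude the product diverges. The right-endpoint case $f(x)=(b-x)^\gamma g(x)$ is identical after the reflection $x\mapsto a+b-x$, which interchanges $I_{a+}^\rho$ and $I_{b-}^\rho$.

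The main obstacle I anticipate is precisely the borderline exponent $\gamma+\rho=0$: there the leading term and the error term have the \emph{same} order $(x-a)^0$, so the limit hinges entirely on showing the normalised kernel integral of $g(a+(x-a)t)$ converges to $g(a)B(\gamma+1,\rho)$, and this is where one genuinely needs more than crude boundedness — either pointwise convergence at $a$ (continuity) or a Lebesgue/Cesàro-differentiation argument. I expect the paper resolves this by the substitution $y=a+(x-a)t$ and a dominated-convergence argument under the tacit assumption that $g$ is at least continuous at the endpoint, which is satisfied in every instance where this proposition is applied.
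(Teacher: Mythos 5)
Your argument is correct (modulo the caveats you yourself raise), but it is not the route the paper takes. The paper's proof in Appendix C is a two-line application of the \emph{first mean value theorem for integrals}: since the kernel $(y-a)^{\gamma}(x-y)^{\rho-1}$ is nonnegative on $(a,x)$, one pulls the factor $g$ out as a number $\kappa(x)\in[\inf_{[a,x]}g,\sup_{[a,x]}g]$, so that by \eqref{intformu}
\begin{equation*}
(I_{a+}^{\rho}f)(x)=\kappa(x)\,\frac{\Gamma(\gamma+1)}{\Gamma(\gamma+\rho+1)}\,(x-a)^{\gamma+\rho},
\end{equation*}
and then all three cases follow simultaneously from the single claim $\kappa(x)\to g(a)$ as $x\to a^{+}$. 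Your decomposition $f=g(a)(\cdot-a)^{\gamma}+(\cdot-a)^{\gamma}(g-g(a))$, the rescaling $y=a+(x-a)t$, and the dominated-convergence treatment of the borderline and divergent cases reach the same conclusion, at the price of a case-by-case analysis; what it buys is transparency about exactly what is needed from $g$ near $a$. Indeed, your main worry is well founded and applies equally to the paper's own proof: the step $\lim_{x\to a^{+}}\inf_{[a,x]}g=\lim_{x\to a^{+}}\sup_{[a,x]}g=g(a)$ asserted there is precisely right-continuity of $g$ at $a$, which is not implied by boundedness and Riemann integrability (take $g=1$ for $y>a$, $g(a)=0$), and the case $\rho<-\gamma$ tacitly assumes $g(a)\neq 0$; in the paper's applications $g$ is smooth, so these tacit hypotheses are harmless, exactly as you surmised. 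Your remark that the crude bound $|R(x)|\le C(x-a)^{\gamma+\rho}$ suffices only when $\gamma+\rho>0$, and that the borderline case requires the normalised kernel argument, is accurate; the mean-value-theorem trick is simply the paper's way of bypassing that case distinction.
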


 \subsection{Fractional Taylor formula} Needless to say, the Taylor formula plays a fundamental role in many branches of mathematics.
For comparison purpose, we recall this well-known formula:  {\em Let $k \geq 1$ be an integer and let $f(x)$ be a real function that is  $k$ times differentiable at the point $x=\theta$.  Further, let $f^{(k)}(x)$ be absolutely continuous on the closed interval between $\theta$ and $x$.  Then we have}
 \begin{equation}\label{TaylorA}
 f(x)= \sum_{j=0}^{k} \frac{f^{(j)}(\theta)}{j!}(x-\theta)^{j}  
 +
 \int_{\theta}^{x} \frac{f^{(k+1)}(t)}{k !}(x-t)^{k}\, \rd t.
 \end{equation}
 Note that since $f^{(k)}(x)$ is an AC-function, $f^{(k+1)}(x)$ exists as an $L^1$-function.

As a second building block for the analysis, we  derive a fractional  Taylor  formula from Lemma  \ref{FracIntPart-0} and
 \eqref{TaylorA}.
\begin{thm}[{\bf Fractional Taylor formula}]\label{NewFTFV1}  Let  $\mu \in (k-1, k]$ with $k \in \mathbb{N},$ and  let $f(x)$ be a real function that is  $(k-1)$times differentiable at the point $x=\theta$.
\begin{itemize}
 \item[(i)] If $f^{(k-1)}\in {\rm AC}([\theta, x])$ and    ${ }^{C}\! D_{\theta+}^{\mu} f \in  {\rm BV}([\theta, x]),$ then we have the left-sided fractional Taylor formula
\begin{equation}\begin{split}\label{LetfTaylor-00}
f(x)=\sum_{j=0}^{k-1} \frac{f^{(j)}(\theta)}{j!}(x-\theta)^{j} + \frac{{ }^{C}\! D_{\theta+}^{\mu} f(\theta+)}{\Gamma(\mu+1)}{(x-\theta)^{\mu}}+  \frac{1}{\Gamma(\mu+1)}
 \int_{\theta}^{x} (x-t)^{\mu}\, \rd\{{ }^{C}\! D_{\theta+}^{\mu} f(t)\}.
\end{split}\end{equation}
\item[(ii)] If $f^{(k-1)}\in {\rm AC}([x,\theta])$ and    ${ }^{C}\! D_{\theta-}^{\mu} f \in  {\rm BV}([x,\theta]),$ then we have the right-sided fractional Taylor formula
\begin{equation}\begin{split}\label{LetfTaylor-000}
f(x)=\sum_{j=0}^{k-1} \frac{f^{(j)}(\theta)}{j!}(x-\theta)^{j} + \frac{{ }^{C}\! D_{\theta-}^{\mu} f(\theta-)}{\Gamma(\mu+1)}{(\theta-x)^{\mu}}
-\frac{1}{\Gamma(\mu+1)} \int^{\theta}_{x} (t-x)^{\mu}\, \rd \{{ }^{C}\! D_{\theta-}^{\mu} f(t)\}.
\end{split}\end{equation}
\end{itemize}
\end{thm}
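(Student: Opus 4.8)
The plan is to derive the fractional Taylor formula by starting from the classical integer-order Taylor formula \eqref{TaylorA} applied with $k$ replaced by $k-1$, and then rewriting the remainder term using the fractional integration by parts formula from Lemma \ref{FracIntPart-0}. Concretely, for part (i), since $f^{(k-1)}\in {\rm AC}([\theta,x])$, I would apply \eqref{TaylorA} with order $k-1$ to get
\begin{equation*}
f(x)=\sum_{j=0}^{k-1}\frac{f^{(j)}(\theta)}{j!}(x-\theta)^{j}+\int_{\theta}^{x}\frac{f^{(k)}(t)}{(k-1)!}(x-t)^{k-1}\,\rd t,
\end{equation*}
so that the whole task reduces to showing that the remainder integral equals the two fractional terms on the right of \eqref{LetfTaylor-00}. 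I would recognize $\frac{1}{(k-1)!}\int_\theta^x f^{(k)}(t)(x-t)^{k-1}\rd t$ as (up to normalization) a Riemann--Liouville integral evaluated at a boundary, but more usefully I would rewrite it by viewing $(x-t)^{k-1}$ as $\Gamma(k)\cdot(x-t)^{\mu}\cdot(\text{something we can absorb via }I^{\,k-\mu})$; the cleanest route is to observe that, by Fubini/semigroup properties of RL integrals, $\int_\theta^x f^{(k)}(t)\,\frac{(x-t)^{k-1}}{(k-1)!}\,\rd t = \big(I_{\theta+}^{k}f^{(k)}\big)(x)=\big(I_{\theta+}^{\mu}\,I_{\theta+}^{k-\mu}f^{(k)}\big)(x)=\big(I_{\theta+}^{\mu}\,{}^{C}\!D_{\theta+}^{\mu}f\big)(x)$, using the definition \eqref{CaputoDev} of the Caputo derivative.

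At this point the remainder is $\big(I_{\theta+}^{\mu}\,{}^{C}\!D_{\theta+}^{\mu}f\big)(x)=\frac{1}{\Gamma(\mu)}\int_\theta^x (x-t)^{\mu-1}\,{}^{C}\!D_{\theta+}^{\mu}f(t)\,\rd t$. The goal is then to integrate by parts, moving the derivative from the smooth factor $(x-t)^{\mu-1}$ onto ${}^{C}\!D_{\theta+}^{\mu}f$, which by hypothesis lies in ${\rm BV}([\theta,x])$ so that $\rd\{{}^{C}\!D_{\theta+}^{\mu}f\}$ makes sense as a Riemann--Stieltjes measure. I would set this up as an application of \eqref{IPPW1122} (or directly Lemma \ref{FracIntPart-0} with a suitable choice of $g$), writing $(x-t)^{\mu-1}=\frac{\rd}{\rd t}\!\big[-\tfrac{1}{\mu}(x-t)^{\mu}\big]$, so that
\begin{equation*}
\frac{1}{\Gamma(\mu)}\int_\theta^x (x-t)^{\mu-1}\,{}^{C}\!D_{\theta+}^{\mu}f(t)\,\rd t=\Big[-\frac{(x-t)^{\mu}}{\Gamma(\mu+1)}\,{}^{C}\!D_{\theta+}^{\mu}f(t)\Big]_{t=\theta^+}^{t=x^-}+\frac{1}{\Gamma(\mu+1)}\int_\theta^x (x-t)^{\mu}\,\rd\{{}^{C}\!D_{\theta+}^{\mu}f(t)\}.
\end{equation*}
The boundary term at $t=x$ vanishes because $(x-t)^{\mu}\to 0$ (here $\mu>0$) and ${}^{C}\!D_{\theta+}^{\mu}f$ is bounded near $x$ being BV; the boundary term at $t=\theta^+$ contributes $\frac{(x-\theta)^{\mu}}{\Gamma(\mu+1)}\,{}^{C}\!D_{\theta+}^{\mu}f(\theta+)$, which is exactly the middle term of \eqref{LetfTaylor-00}. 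Combining, \eqref{LetfTaylor-00} follows. Part (ii) is entirely analogous: one uses the classical Taylor formula expanding from $\theta$ leftward, identifies the remainder with $(I_{\theta-}^{\mu}\,{}^{C}\!D_{\theta-}^{\mu}f)(x)$ via \eqref{CaputoDev} and the right-sided analogues of the RL semigroup property, then integrates by parts using the right-sided version \eqref{FracIntPart-2}, with the sign $(-1)^k$ in the Caputo definition producing the overall minus sign in front of the RS integral and the term $(\theta-x)^\mu$ replacing $(x-\theta)^\mu$.

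The main obstacle I anticipate is not the algebra but the careful justification of the integration-by-parts step in the Riemann--Stieltjes sense under the stated weak hypotheses, and the precise evaluation of the boundary limits. In particular one must check that $t\mapsto (x-t)^{\mu-1}$ (which blows up at $t=x$ when $\mu<1$) pairs legitimately with the BV function ${}^{C}\!D_{\theta+}^{\mu}f$ — this is where Lemma \ref{FracIntPart-0} is really needed rather than the naive formula \eqref{IPPW1122}, since \eqref{IPPW1122} literally requires both factors to be BV. The correct reading is to apply Lemma \ref{FracIntPart-0}(i) with the roles arranged so that the singular power is absorbed into an RL integral of a smooth function: take $g$ in Lemma \ref{FracIntPart-0} so that $I_{\theta+}^{\rho}g'$ reproduces $(x-\cdot)^{\mu-1}/\Gamma(\mu)$ via \eqref{intformu}, and let the BV-function ${}^{C}\!D_{\theta+}^{\mu}f$ play the role of the object whose RS differential appears. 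One then needs Proposition \ref{boundaryvalue} (or the elementary estimate $(x-t)^\mu\|{}^{C}\!D_{\theta+}^{\mu}f\|_\infty\to0$ together with \eqref{integralProp}) to handle the endpoint contributions, and the hypothesis that $f$ is $(k-1)$ times differentiable at $\theta$ plus $f^{(k-1)}\in{\rm AC}$ to ensure the polynomial part of the classical Taylor expansion is exactly $\sum_{j=0}^{k-1}\frac{f^{(j)}(\theta)}{j!}(x-\theta)^j$ with an $L^1$ remainder density $f^{(k)}$. Once these technical points are in place — most of which are packaged into Lemma \ref{FracIntPart-0} and Proposition \ref{boundaryvalue} — the result follows by direct substitution. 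The details of this verification are deferred to the appendix, consistent with how Lemma \ref{FracIntPart-0} itself is handled.
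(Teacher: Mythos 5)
Your argument is correct, and in substance it is the paper's own proof reorganized: both start from the classical Taylor formula \eqref{TaylorA} at order $k-1$ and both convert the remainder via fractional integration by parts, identify $I_{\theta+}^{k-\mu}f^{(k)}$ with ${}^{C}\!D_{\theta+}^{\mu}f$ through \eqref{CaputoDev}, and read off the boundary term at $t=\theta^+$ (the term at $t=x^-$ vanishing since $\mu>0$ and the BV factor is bounded). The only real difference is bookkeeping: the paper rewrites the kernel as $(x-t)^{k-1}=-\frac{(k-1)!}{\Gamma(\mu+1)}I_{x-}^{k-\mu}\{\frac{\rd}{\rd t}(x-t)^{\mu}\}$ using \eqref{intformu} and then invokes Lemma \ref{FracIntPart-0}\,(ii) wholesale, whereas you first make the intermediate object $\big(I_{\theta+}^{\mu}\,{}^{C}\!D_{\theta+}^{\mu}f\big)(x)$ explicit via the semigroup property (which is exactly the Fubini swap hidden inside the proof of Lemma \ref{FracIntPart-0}) and then integrate by parts directly; the two computations are the same Fubini-plus-\eqref{IPPW1122} argument performed in a different order. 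One small correction to your closing commentary: the relevant packaged statement is part (ii), not part (i), of Lemma \ref{FracIntPart-0} — the singular power $(x-t)^{\mu-1}$ blows up at the \emph{right} endpoint $t=x$, so it is naturally absorbed as a right-sided integral $I_{x-}^{k-\mu}$ of the smooth factor, with the hypothesis $I_{\theta+}^{k-\mu}f^{(k)}={}^{C}\!D_{\theta+}^{\mu}f\in{\rm BV}$ matching \eqref{FracIntPart-2}; trying to realize $(x-\cdot)^{\mu-1}$ as a left-sided integral $I_{\theta+}^{\rho}g'$ has the sidedness backwards. This slip does not affect your proof, because in your formulation the integrator $-(x-t)^{\mu}/\Gamma(\mu+1)$ is continuous, monotone and absolutely continuous, so both factors are of bounded variation and \eqref{IPPW1122} applies directly, the only point needing mention being the identification $\int_{\theta}^{x}v(t)\,w'(t)\,\rd t=\int_{\theta}^{x}v(t)\,\rd w(t)$ for $w\in{\rm AC}$ and $v\in{\rm BV}$, which you correctly flag.
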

\begin{proof} By \eqref{TaylorA}  (with  $k\to k-1$), we have
\begin{equation}\begin{split}\label{LetfTaylor-1}
f(x)&=\sum_{j=0}^{k-1} \frac{f^{(j)}(\theta)}{j!}(x-\theta)^{j}  +\frac{1}{(k-1)!} \int_{\theta}^{x}(x-t)^{k-1}f^{(k)}(t)\, \rd t.
\end{split}\end{equation}
From \eqref{intformu}, we find readily that  for $x>t,$
$$(x-t)^{k-1} =- \frac{(k-1)!}{\Gamma(\mu+1)}I_{x-}^{k-\mu}\Big\{\frac{\rd}{\rd t} (x-t)^{\mu}\Big\}.$$
Thus, we  can rewrite \eqref{LetfTaylor-1} as  
\begin{equation}\begin{split}\label{LetfTaylor-10}
f(x)&=\sum_{j=0}^{k-1} \frac{f^{(j)}(\theta)}{j!}(x-\theta)^{j}   - \frac{1}{\Gamma(\mu+1)} \int_{\theta}^{x}I_{x-}^{k-\mu}\Big\{\frac{\rd}{\rd t} (x-t)^{\mu}\Big\}f^{(k)}(t)\, \rd t.
\end{split}\end{equation}
  Substituting $a,$ $b,$ $\rho,$  $f$ and $g$ in \eqref{FracIntPart-2} of Lemma \ref{FracIntPart-0}  by $\theta,$ $x,$ $k-\mu$, $f^{(k)}(t)$ and $(x-t)^\mu,$ respectively, we obtain that  for $x>\theta,$
   \begin{equation}\begin{split}\label{LetfTaylor-2+1}
\int_{\theta}^{x}I_{x-}^{k-\mu}\Big\{\frac{\rd}{\rd t} (x-t)^{\mu}\Big\}f^{(k)}(t)\, \rd t
&=-{I_{\theta+}^{k-\mu} f^{(k)}(\theta+)}(x-\theta)^{\mu}-\int_{\theta}^{x} (x-t)^{\mu}\, \rd \{{I_{\theta+}^{k-\mu} f^{(k)}(t)}\} \\
&=-{ }^{C}\! D_{\theta+}^{\mu} f(\theta+){(x-\theta)^{\mu}}-\int_{\theta}^{x} (x-t)^{\mu}\, \rd \{{ }^{C}\! D_{\theta+}^{\mu} f(t)\},
  \end{split}\end{equation}
  where in the last step, we used the definition  \eqref{CaputoDev}.   Thus, we obtain  \eqref{LetfTaylor-00} from   \eqref{LetfTaylor-10}-\eqref{LetfTaylor-2+1} immediately.

  The right-sided formula \eqref{LetfTaylor-000} can be obtained in a very similar fashion.
\end{proof}

\begin{rem}\label{existing} {\em When $\mu=k,$ the fractional Taylor formulas  \eqref{LetfTaylor-00} and \eqref{LetfTaylor-000}
lead to  \eqref{TaylorA}.
The fractional formula can be viewed as the ``interpolation" of  the integer-order Taylor  formulas with the regularity indexes  $k-1$ and $k.$ Apparently,  the integer-order Taylor formula \eqref{TaylorA} is exact for all $f\in {\mathcal P_k}={\rm span}
\{(x-\theta)^j\,:\, 0\le j\le k\}.$ In the fractional case, the exactness of \eqref{LetfTaylor-00}  is for all $f\in {\mathcal P_{k-1}}\cup\{(x-\theta)^\mu\}$ {\rm(}i.e., the remainder vanishes{\rm).} We can verify this readily from \eqref{CaputoDev} and the fundamental formula:
$ { }^{C}\! D_{\theta+}^{\mu} \{(x-\theta)^\mu\}=\Gamma({\mu+1}).$ Note that the right-sided formula \eqref{LetfTaylor-000} is exact for all  $f\in {\mathcal P_{k-1}}\cup\{(\theta-x)^\mu\}.$} \qed
\end{rem}

We remark that there are several versions of fractional Taylor formulas for functions with different regularities.  For example,
Anastassiou \cite[(21)]{Anastassiou2009Chaos-On} stated the right-sided fractional Taylor formula:  for real $\mu\ge 1,$  let $k=[\mu]$ be its integer part, and assume that $f,f',\ldots,f^{(k-1)}\in {\rm AC}([x,\theta]).$ Then
\begin{equation*}\label{Anast}
f(x)=\sum_{j=0}^{k-1} \frac{f^{(j)}(\theta)}{j !}(x-\theta)^{j}+\frac{1}{\Gamma(\mu)} \int_{x}^{\theta}(t-x)^{\mu-1} \, { }^{C}\!D_{\theta{-}}^{\mu} f(t)\, \rd t.
\end{equation*}
Kolwankar and Gangal   \cite{Kolwankar1998PRL-Local}  
presented  some  local fractional Taylor
expansion with a  different fractional derivative in the remainder.

\section{Legendre expansions of functions with interior singularities}\label{sect3main}
\setcounter{equation}{0}
\setcounter{lmm}{0}
\setcounter{thm}{0}

It is known that much of the error analysis for orthogonal polynomial approximation and associated interpolation and quadrature  relies on  the decay rate of the expansion coefficient (cf.\! \cite{Xiang2012SINUM,Majidian2017ANM}).
Remarkably, we find that  the  spirit in  deriving the fractional Taylor formula in Theorem \ref{NewFTFV1}  can be extended to obtain  an analogous  formula for  the Legendre expansion coefficient
\begin{equation}\label{ancoef}
\hat u_n^L=\frac{2n+1}{2}\int_{-1}^{1}{u(x)P_n(x)}\, {\rm d} x,
\end{equation}
where $P_n(x)$ is the Legendre polynomial of degree $n.$
This formula lays the groundwork for all the forthcoming analysis. In fact, the argument  is also different from that for  the Chebyshev expansion coefficient  in \cite{Trefethen2008SIREV,Trefethen2013Book,Majidian2017ANM,Liu19MC-Optimal}.



\subsection{Fractional formula for the Legendre expansion coefficient}\label{sub31}
In what follows,  we assume that $u$ has a limited regularity with an interior singularity at $\theta\in (-1,1),$  e.g.,
$u(x)=|x-\theta|^\alpha$ with $\alpha>-1.$   Note   that
the results   can be extended to multiple interior singularities straightforwardly.
\begin{theorem}\label{IdentityForUn}
Let  $\mu \in (k-1, k]$ with $k \in \mathbb{N}$ and let $\theta\in(-1,1).$  If  $u,u',\ldots,u^{(k-1)}\in {\rm AC}([-1,1]),$  ${ }^{C}\! D_{\theta+}^{\mu} u \in  {\rm BV}([\theta, 1])$ and
  ${ }^{C}\! D_{\theta-}^{\mu} u \in  {\rm BV}([-1,\theta]),$  then  we have the following  representation of the Legendre expansion coefficient for each $n\ge\mu+1,$
\begin{equation}\label{HatUnCaseC}
\begin{split}
\hat u_n^L = \frac{2n+1}{2} &\Big\{ (I_{1-}^{\mu+1} P_n)(\theta) ({{ }^{C}\! D_{\theta+}^{\mu} u)(\theta+)}
+\int_{\theta}^{1} (I_{1-}^{\mu+1} P_n)(x) \,  \rd \big\{{ }^{C}\! D_{\theta+}^{\mu} u(x)\big\}\\
&\quad  +(I_{-1+}^{\mu+1} P_n)(\theta) ({{ }^{C}\! D_{\theta-}^{\mu} u)(\theta-)}  - \int^{\theta}_{-1}(I_{-1+}^{\mu+1} P_n)(x)\, \rd \big\{{ }^{C}\! D_{\theta-}^{\mu} u(x)\big\}\Big\},
\end{split}
\end{equation}
where the fractional integrals of $P_n(x)$ can be evaluated explicitly  by
\begin{equation}\label{twoGGFs}
\begin{split}
&(I_{1-}^{\mu+1} P_n)(x)= 
\frac{(1-x)^{\mu+1}}{\Gamma(\mu+2)} \frac{P_{n}^{(\mu+1, -\mu-1)}(x)}{P_{n}^{(\mu+1,-\mu-1)}(1)},\\
& (I_{-1+}^{\mu+1} P_n)(x) 
=\frac{(1+x)^{\mu+1}}{\Gamma(\mu+2)} \frac{P_{n}^{( -\mu-1,\mu+1)}(x)}{P_{n}^{(\mu+1,-\mu-1)}(1)}.
\end{split}
\end{equation}
Here $P_{n}^{(\mu+1, -\mu-1)}(x)$ and $P_{n}^{( -\mu-1,\mu+1)}(x)$ are the generalised Jacobi polynomials defined by the hypergeometric functions  as in  Szeg\"o \cite[p.\! 64]{Szego1975Book}.
\end{theorem}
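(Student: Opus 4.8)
The plan is to mirror, at the level of the Legendre expansion coefficient, exactly the argument used to prove the fractional Taylor formula in Theorem \ref{NewFTFV1}. First I would split the defining integral \eqref{ancoef} at the singular point $\theta$, writing $\hat u_n^L=\frac{2n+1}2\big(\int_{-1}^\theta+\int_\theta^1\big)u(x)P_n(x)\,\rd x$, and treat the two pieces separately and symmetrically. On the interval $[\theta,1]$, since $u,u',\dots,u^{(k-1)}\in{\rm AC}([-1,1])$, I would integrate by parts $k$ times in the ordinary sense, moving derivatives off $P_n$ and onto $u$; because $P_n$ is a polynomial its $k$-fold antiderivative is elementary, and the boundary contributions at $x=1$ vanish provided we use the antiderivative of $P_n$ that has a zero of order $k$ at $x=1$ (this is where the normalisation $(1-x)^{\mu+1}P_n^{(\mu+1,-\mu-1)}(x)/P_n^{(\mu+1,-\mu-1)}(1)$ enters — it is precisely $I_{1-}^{\mu+1}P_n$ up to the polynomial factor). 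The boundary terms at $x=\theta$ will contribute the $(I_{1-}^{\mu+1}P_n)(\theta)$-type terms once the fractional integration by parts is applied.

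The key algebraic device is the identity, already used in the proof of Theorem \ref{NewFTFV1}, that for $x>t$
\[
(x-t)^{k-1}=-\frac{(k-1)!}{\Gamma(\mu+1)}\,I_{x-}^{k-\mu}\Big\{\frac{\rd}{\rd t}(x-t)^\mu\Big\},
\]
together with the explicit fractional-integral formulas \eqref{intformu}. Concretely, after the ordinary integrations by parts the term $\int_\theta^1$ becomes an integral of $f^{(k)}=u^{(k)}$ against the $k$-fold antiderivative of $P_n$; I would then recognise that $k$-fold antiderivative as $(-1)^k$ times an RL fractional integral $I_{\cdot}^{k}P_n$ and, using the semigroup property $I^{k}=I^{k-\mu}\circ I^{\mu}$ together with Lemma \ref{FracIntPart-0}(ii) (fractional integration by parts, with the roles of $f$ and $g$ as in the proof of the Taylor formula: $f\leftrightarrow u^{(k)}$, $g\leftrightarrow$ the relevant antiderivative of $P_n$), convert the integral against $u^{(k)}$ into an integral against $\rd\{{}^{C}\!D_{\theta+}^{\mu}u\}=\rd\{I_{\theta+}^{k-\mu}u^{(k)}\}$ plus the boundary value ${}^{C}\!D_{\theta+}^{\mu}u(\theta+)$ times $(I_{1-}^{\mu+1}P_n)(\theta)$. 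The hypotheses ${}^{C}\!D_{\theta+}^{\mu}u\in{\rm BV}([\theta,1])$ and ${}^{C}\!D_{\theta-}^{\mu}u\in{\rm BV}([-1,\theta])$ are exactly what make Lemma \ref{FracIntPart-0} applicable on each subinterval, and the restriction $n\ge\mu+1$ guarantees $(I_{1-}^{\mu+1}P_n)$ and $(I_{-1+}^{\mu+1}P_n)$ are the genuine $(\mu+1)$-th RL integrals (no divergence at the endpoints, since $P_n$ is smooth and the required polynomial degree is available). Doing the same on $[-1,\theta]$ with $I_{\theta-}$ and the sign $(-1)^k$ from \eqref{CaputoDev} yields the second pair of terms, with the minus sign in front of the last integral of \eqref{HatUnCaseC} coming from the orientation reversal of the Riemann–Stieltjes integral on $[-1,\theta]$.

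For the explicit evaluation \eqref{twoGGFs}, I would invoke the classical Rodrigues/hypergeometric representation of generalised Jacobi polynomials (Szeg\H{o} \cite[p.\ 64]{Szego1975Book}) together with formula \eqref{unL-Gui3} in the introductory discussion, i.e. $\frac{1}{\Gamma(\rho)}\int_x^1\frac{P_n(t)}{(t-x)^{1-\rho}}\,\rd t=\frac{n!}{\Gamma(\rho+n+1)}(1-x)^\rho P_n^{(\rho,-\rho)}(x)$ with $\rho=\mu+1$; evaluating the constant $n!/\Gamma(\mu+n+2)$ against the value $P_n^{(\mu+1,-\mu-1)}(1)=\binom{n+\mu+1}{n}=\Gamma(n+\mu+2)/(\Gamma(n+1)\Gamma(\mu+2))$ turns the prefactor into $(1-x)^{\mu+1}/\Gamma(\mu+2)$ divided by $P_n^{(\mu+1,-\mu-1)}(1)$, which is the stated form; the left-sided formula follows by the reflection $x\mapsto-x$, using $P_n^{(-\mu-1,\mu+1)}(x)=(-1)^nP_n^{(\mu+1,-\mu-1)}(-x)$ and $P_n(-x)=(-1)^nP_n(x)$. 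The main obstacle, as in the Taylor-formula proof, is handling the fractional integration by parts under the weak BV-hypothesis so that the endpoint limits $\lim_{x\to\theta+}I_{\theta+}^{k-\mu}u^{(k)}(x)$ are finite and correctly identified with ${}^{C}\!D_{\theta+}^{\mu}u(\theta+)$ — this is where Lemma \ref{FracIntPart-0} and Proposition \ref{boundaryvalue} do the real work, and one must check the limit values at $x=1$ (resp. $x=-1$) of $I_{1-}^{\mu+1}P_n$ (resp. $I_{-1+}^{\mu+1}P_n$) vanish to kill the spurious boundary terms, which they do since these are $(1\mp x)^{\mu+1}$ times a polynomial.
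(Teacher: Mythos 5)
Your route is workable and is essentially the ``integrated'' form of the paper's argument: instead of substituting the fractional Taylor formula of Theorem \ref{NewFTFV1} into \eqref{ancoef} and then exchanging the order of integration, you integrate by parts directly on each subinterval ($k$ ordinary steps, then one fractional step via Lemma \ref{FracIntPart-0}). The fractional step is set up correctly (with $f=u^{(k)}$, $\rho=k-\mu$, $g=\mp(I_{1\mp}^{\mu+1}P_n)$, admissible because $I_{\theta\pm}^{k-\mu}u^{(k)}=\pm\,{}^{C}\!D_{\theta\pm}^{\mu}u\in{\rm BV}$), the vanishing of $I_{1-}^{\mu+1}P_n$ at $x=1$ and of $I_{-1+}^{\mu+1}P_n$ at $x=-1$ is correctly noted, and your derivation of \eqref{twoGGFs} from Bateman's formula together with $P_n^{(\mu+1,-\mu-1)}(1)$ and the reflection identity is fine (the equation you cite for it is from a passage not present in the final text, but the formula you state is the right one).

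The genuine gap is at the interior point $\theta$. The $k$ ordinary integrations by parts on $[\theta,1]$ leave the boundary terms $\sum_{j=0}^{k-1}u^{(j)}(\theta)\,(I_{1-}^{j+1}P_n)(\theta)$, and those on $[-1,\theta]$ leave $\sum_{j=0}^{k-1}(-1)^j u^{(j)}(\theta)\,(I_{-1+}^{j+1}P_n)(\theta)$. These carry the ordinary derivatives $u^{(j)}(\theta)$, $j\le k-1$; they are \emph{not} the $(I_{1\mp}^{\mu+1}P_n)(\theta)$-terms later produced by the fractional integration by parts (which carry the Caputo limits), they do not vanish individually, and your proposal never disposes of them. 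They cancel only when the two subintervals are combined, since
\begin{equation*}
(I_{1-}^{j+1}P_n)(\theta)+(-1)^j(I_{-1+}^{j+1}P_n)(\theta)=\frac{1}{j!}\int_{-1}^{1}(x-\theta)^j P_n(x)\,\rd x=0,\qquad 0\le j\le k-1,
\end{equation*}
by Legendre orthogonality, valid precisely because $n\ge\mu+1\ge k$; this is the step \eqref{IdentityForUn-4} in the paper's proof. Relatedly, your stated reason for the hypothesis $n\ge\mu+1$ (convergence of the RL integrals of $P_n$) is off the mark: $I_{1-}^{\mu+1}P_n$ and $I_{-1+}^{\mu+1}P_n$ exist for every $n\ge 0$; the restriction is needed solely for the orthogonality cancellation above. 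Once that cancellation is inserted, your computation reproduces \eqref{HatUnCaseC}, with the last two terms arising from Lemma \ref{FracIntPart-0}(i) on $[-1,\theta]$ together with the factor $(-1)^k$ in \eqref{CaputoDev}.
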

\begin{proof} Given the regularity of $u$, we  obtain from  the fractional Taylor formulas in Theorem \ref{NewFTFV1} that
for $x\in (\theta,1),$
\begin{equation}\begin{split}\label{IdentityForUn-1}
u(x)=\sum_{j=0}^{k-1} \frac{u^{(j)}(\theta)}{j!}(x-\theta)^{j} + \frac{{ }^{C}\! D_{\theta+}^{\mu} u(\theta+)}{\Gamma(\mu+1)}{(x-\theta)^{\mu}}+  \frac{1}{\Gamma(\mu+1)}
 \int_{\theta}^{x} (x-t)^{\mu}\, \rd \big\{{ }^{C}\! D_{\theta+}^{\mu} u(t)\big\},
\end{split}\end{equation}
and for $x\in (-1,\theta),$
\begin{equation}\begin{split}\label{IdentityForUn-2}
u(x)=\sum_{j=0}^{k-1} \frac{u^{(j)}(\theta)}{j!}(x-\theta)^{j} + \frac{{ }^{C}\! D_{\theta-}^{\mu} u(\theta-)}{\Gamma(\mu+1)}{(\theta-x)^{\mu}}
-\frac{1}{\Gamma(\mu+1)} \int^{\theta}_{x} (t-x)^{\mu}\, \rd \big\{{ }^{C}\! D_{\theta-}^{\mu} u(t)\big\}.
\end{split}\end{equation}
Substituting  \eqref{IdentityForUn-1} and  \eqref{IdentityForUn-2} into \eqref{ancoef} leads to
\begin{equation}\label{IdentityForUn-3}
\begin{split}
&\frac {2\,  \hat u_n^L}{2n+1}  =  \int_{-1}^{1}{u(x)P_n(x)}\, {\rm d} x= \sum_{j=0}^{k-1} \frac{u^{(j)}(\theta)}{j!}
\int_{-1}^{1}(x-\theta)^{j} {P_n(x)}\, {\rm d} x \\
&\;\;  +\frac {{ }^{C}\! D_{\theta+}^{\mu} u(\theta+)}{\Gamma(\mu+1)}\int_{\theta}^{1} {(x-\theta)^{\mu}}\,P_n(x)\, {\rm d} x
+\frac{1}{\Gamma(\mu+1)} \int_{\theta}^{1}\Big(\int_{\theta}^{x} (x-t)^{\mu} \rd \{{ }^{C}\! D_{\theta+}^{\mu} u(t)\}\Big)\,P_n(x)\, {\rm d} x\\
&\;\; +\frac{{ }^{C}\! D_{\theta-}^{\mu} u(\theta-)} {\Gamma(\mu+1)}  \int_{-1}^{\theta} {(\theta-x)^{\mu}}\,P_n(x)\, {\rm d} x-\frac{1}{\Gamma(\mu+1)} \int_{-1}^{\theta}\Big(\int_{x}^{\theta} (t-x)^{\mu} \rd \{{ }^{C}\! D_{\theta-}^{\mu} u(t)\}\Big)\,P_n(x)\, {\rm d} x.
\end{split}\end{equation} 
From the orthogonality of the Legendre polynomials, we obtain that  for $n\ge \mu+1\ge k,$
\begin{equation}\label{IdentityForUn-4}
\begin{split}
\int_{-1}^{1}(x-\theta)^{j} P_n(x) {\rm d} x=0,\quad 0\le j\le k-1.
\end{split}\end{equation} 
We find readily that for a fixed $\theta\in (-1,1),$
\begin{equation}\label{IdentityForUn-60}
\begin{split}
&\int_{\theta}^{1}\Big(\int_{\theta}^{x} {(x-t)^{\mu}}\, \rd \{{ }^{C}\! D_{\theta+}^{\mu} u(t)\}\Big)\,P_n(x)\, {\rm d} x=
\int_{\theta}^{1}\Big(\int_{t}^{1} {(x-t)^{\mu}} \,P_n(x) {\rm d} x\Big) \rd \{{ }^{C}\! D_{\theta+}^{\mu} u(t)\},
\end{split}\end{equation}
and
\begin{equation}\label{IdentityForUn-602}
\begin{split}
& \int_{-1}^{\theta}\Big(\int_{x}^{\theta} (t-x)^{\mu} \rd \{{ }^{C}\! D_{\theta-}^{\mu} u(t)\}\Big)\,P_n(x)\, {\rm d} x=
\int^{\theta}_{-1}\Big(\int_{-1}^{t} {(t-x)^{\mu}} \,P_n(x) \,{\rm d} x\Big)\, \rd \{{ }^{C}\! D_{\theta-}^{\mu} u(t)\}.
\end{split}\end{equation}
In view of the definition of the fractional integral in \eqref{leftintRL} and \eqref{IdentityForUn-4}-\eqref{IdentityForUn-602}, we can rewrite
\eqref{IdentityForUn-3} as
\begin{equation}\label{IdentityForUn-3A}
\begin{split}
\frac {2\,  \hat u_n^L}{2n+1}  &=
(I_{1-}^{\mu+1} P_n)(\theta)  ({{ }^{C}\! D_{\theta+}^{\mu} u)(\theta+)}
+\int_{\theta}^{1} (I_{1-}^{\mu+1} P_n)(t) \,  \rd \{{ }^{C}\! D_{\theta+}^{\mu} u(t)\}\\
&\quad  +(I_{-1+}^{\mu+1} P_n)(\theta) ({ }^{C}\! D_{\theta-}^{\mu} u)(\theta-) - \int^{\theta}_{-1}(I_{-1+}^{\mu+1} P_n)(t)\, \rd \{{ }^{C}\! D_{\theta-}^{\mu} u(t)\},
\end{split}\end{equation}
which yields \eqref{HatUnCaseC}.  The two fractional integral identities of $P_n(x)$
in \eqref{twoGGFs} can be obtained from the  formulas  of the Jacobi polynomials (cf.\! Szeg\"o  \cite[p.\! 96]{Szego1975Book}), due to the Bateman's fractional integral formula (cf.\! \cite{Andrews1999Book}).  This ends the proof.
\end{proof}

We see from the above proof that the identity \eqref{HatUnCaseC}  is rooted in the fractional Taylor formula in Theorem \ref{NewFTFV1}. Also note that
 when $\mu=k$, the formula  \eqref{IdentityForUn} takes a much simpler form. Firstly,  the AC-BV regularity reduces to the setting considered by Trefethen \cite{Trefethen2008SIREV,Trefethen2013Book},  Xiang and Bornemann \cite{Xiang2012SINUM} among others (where one motivative example for the framework therein is to best characterise the regularity of  $u(x)=|x|$).  Secondly,  from
  Szeg\"o  \cite[Chap.\!\! 4]{Szego1975Book}, we find that for $\mu>-2,n\ge 0,$
  \begin{equation}\label{Pnmumu}
  P_{n}^{(\mu+1,-\mu-1)}(1)=\frac{\Gamma(n+\mu+2)} {n!\,\Gamma(\mu+2)},
  \end{equation}
  and for $n\ge k+1,$
  \begin{equation}\label{Pnmumu2}
  P_{n}^{(-k-1,k+1)}(x)= \frac{(n-k-1)! (n+k+1)!} {(n!)^2}\Big(\frac{x-1} 2\Big)^{k+1} P_{n-k-1}^{(k+1,k+1)}(x).
  \end{equation}
  Thus,  we can rewrite the second formula in \eqref{twoGGFs} with $\mu=k$  in terms of the usual Jacobi polynomial as follows
  \begin{equation}\label{Pnmumu3}
(I_{-1+}^{k+1} P_n)(x) = \frac{(-1)^{k+1}\,(n-k-1)!} {2^{k+1}\,n!} (1-x^2)^{k+1} P_{n-k-1}^{(k+1,k+1)}(x).
  \end{equation}
Following the same lines as above and using the parity of Jacobi polynomials, we can  reformulate the first formula  in \eqref{twoGGFs} with $\mu=k$ as
 \begin{equation}\label{Pnmumu4}
(I_{1-}^{k+1} P_n)(x) =  \frac{(n-k-1)!} {2^{k+1}\,n!} (1-x^2)^{k+1} P_{n-k-1}^{(k+1,k+1)}(x)= (-1)^{k+1}\, (I_{-1+}^{k+1} P_n)(x).
  \end{equation}
 In view of this relation, we find from    \eqref{CaputoDev} with $\mu=k$ that
 \eqref{HatUnCaseC} reduces to
  \begin{equation}\label{HatUnCaseC-V2+1}
\begin{split}
\hat u_n^L & = \frac{2n+1}{2} \Big\{
  u^{(k)}(\theta+) (I_{1-}^{k+1} P_n)(\theta)
+\int_{\theta}^{1} (I_{1-}^{k+1} P_n)(x) \,  \rd \big\{u^{(k)}(x)\big\}\\
&\quad -u^{(k)}(\theta-) (I_{1-}^{k+1} P_n)(\theta) + \int^{\theta}_{-1}(I_{1-}^{k+1} P_n)(x)\, \rd \big\{ u^{(k)}(x)\big\}\Big\}\\
&= \frac{2n+1}{2} \bigg\{ \int^{\theta}_{-1}+ \int_{[\theta,\theta]} +\int_{\theta}^{1} \bigg\} (I_{1-}^{k+1} P_n)(x)\, \rd \big\{ u^{(k)}(x)\big\}.
\end{split}
\end{equation}
By virtue of the splitting rule in Lemma \ref{LmmCarter2000Book-1},  we can summarise the formula of  the Legendre expansion coefficient with $\mu=k$ as follows.
\begin{cor}\label{sequal1}
 If $u,u',\ldots,u^{(k-1)}\in {\rm AC}([-1,1])$  and $ u^{(k)} \in  {\rm BV}([-1, 1])$ with $k \in \mathbb{N},$
 then  we have  for  all $n\ge k+1,$
\begin{equation}\label{HatUnCases1}
\begin{split}
\hat u_n^L= \frac{2n+1}{2} &\int^{1}_{-1}(I_{1-}^{k+1} P_n)(x)\, \rd \{ u^{(k)}(x)\},
\end{split}
\end{equation}
where $(I_{1-}^{k+1} P_n)(x)$ can be explicitly evaluated by \eqref{Pnmumu4}.
\end{cor}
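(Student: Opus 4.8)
The plan is to obtain Corollary \ref{sequal1} as the special case $\mu=k$ of Theorem \ref{IdentityForUn}, reorganising the four boundary/integral terms in \eqref{HatUnCaseC} into a single Riemann--Stieltjes integral over $[-1,1]$ with the help of the splitting rule in Lemma \ref{LmmCarter2000Book-1}. First I would check that the hypotheses of Theorem \ref{IdentityForUn} are met: with $\mu=k$ we have $\mu\in(k-1,k]$, the condition $u,u',\ldots,u^{(k-1)}\in{\rm AC}([-1,1])$ is exactly what is assumed, and since $u^{(k)}\in{\rm BV}([-1,1])$ its restrictions to $[\theta,1]$ and $[-1,\theta]$ are of bounded variation on those subintervals; moreover by \eqref{CaputoDev} with $k-\mu=0$ one has ${}^{C}\!D_{\theta+}^{k}u=I_{\theta+}^{0}u^{(k)}=u^{(k)}$ and ${}^{C}\!D_{\theta-}^{k}u=(-1)^{k}I_{\theta-}^{0}u^{(k)}=(-1)^{k}u^{(k)}$, so the Caputo derivatives in \eqref{HatUnCaseC} are literally $u^{(k)}$ up to the sign $(-1)^k$ on the left branch. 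The one-sided limits ${}^{C}\!D_{\theta+}^{k}u(\theta+)=u^{(k)}(\theta+)$ and ${}^{C}\!D_{\theta-}^{k}u(\theta-)=(-1)^{k}u^{(k)}(\theta-)$ exist because a BV function has one-sided limits everywhere.

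Next I would substitute the identities \eqref{Pnmumu}, \eqref{Pnmumu2} (hence \eqref{Pnmumu3} and \eqref{Pnmumu4}) into \eqref{twoGGFs} to record that, when $\mu=k$,
\[
(I_{-1+}^{k+1}P_n)(x)=(-1)^{k+1}(I_{1-}^{k+1}P_n)(x),
\]
which is exactly \eqref{Pnmumu4}; these are the explicit Jacobi-polynomial evaluations already derived just above the corollary, so I may quote them. Plugging $\mu=k$, the sign relations for the Caputo derivatives, and this parity relation into \eqref{HatUnCaseC} turns the four terms into
\[
\hat u_n^L=\frac{2n+1}{2}\Big\{u^{(k)}(\theta+)(I_{1-}^{k+1}P_n)(\theta)+\int_\theta^1(I_{1-}^{k+1}P_n)(x)\,\rd\{u^{(k)}(x)\}-u^{(k)}(\theta-)(I_{1-}^{k+1}P_n)(\theta)+\int_{-1}^\theta(I_{1-}^{k+1}P_n)(x)\,\rd\{u^{(k)}(x)\}\Big\},
\]
where the third term picks up a sign $(-1)\cdot(-1)^{k+1}\cdot(-1)^k=1$ from the minus sign in \eqref{HatUnCaseC}, the $(-1)^{k+1}$ in the parity relation, and the $(-1)^k$ in ${}^{C}\!D_{\theta-}^{k}u(\theta-)$; similarly the fourth term comes out with coefficient $+1$. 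This is precisely the display \eqref{HatUnCaseC-V2+1} already written in the text, so that intermediate line can be cited verbatim.

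Finally I would invoke Lemma \ref{LmmCarter2000Book-1}: the term $u^{(k)}(\theta+)(I_{1-}^{k+1}P_n)(\theta)-u^{(k)}(\theta-)(I_{1-}^{k+1}P_n)(\theta)$ equals $(I_{1-}^{k+1}P_n)(\theta)\big(u^{(k)}(\theta+)-u^{(k)}(\theta-)\big)=\int_{[\theta,\theta]}(I_{1-}^{k+1}P_n)(x)\,\rd\{u^{(k)}(x)\}$ by the single-point formula in that lemma, and then the additivity part of Lemma \ref{LmmCarter2000Book-1} over the partition $[-1,1]=[-1,\theta]\cup[\theta,\theta]\cup[\theta,1]$ collapses the three RS integrals into $\int_{-1}^{1}(I_{1-}^{k+1}P_n)(x)\,\rd\{u^{(k)}(x)\}$, yielding \eqref{HatUnCases1}. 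The explicit evaluation of $(I_{1-}^{k+1}P_n)(x)$ is \eqref{Pnmumu4}, already established. The only genuinely delicate point, and the step I would be most careful about, is the bookkeeping of the three interacting sign factors in the left-branch terms (the explicit $(-1)$ in \eqref{HatUnCaseC}, the $(-1)^{k+1}$ from the parity relation \eqref{Pnmumu4}, and the $(-1)^k$ from the Caputo derivative \eqref{CaputoDev}); everything else is a direct substitution plus one application of the RS splitting rule. A minor secondary check is that $n\ge k+1$ (rather than $n\ge\mu+1$) is the right index threshold, which holds since $\mu=k$ makes $\mu+1=k+1$.
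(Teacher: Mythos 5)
Your proposal is correct and follows essentially the same route as the paper: specialize Theorem \ref{IdentityForUn} to $\mu=k$, note ${}^{C}\!D_{\theta\pm}^{k}u=(\pm1)^{k}u^{(k)}$ from \eqref{CaputoDev} together with the parity relation \eqref{Pnmumu4}, arrive at the intermediate display \eqref{HatUnCaseC-V2+1}, and then collapse the three pieces (including the point mass at $\theta$) into a single Riemann--Stieltjes integral via Lemma \ref{LmmCarter2000Book-1}. The only quibble is a labelling slip in your sign bookkeeping --- the explicit minus in \eqref{HatUnCaseC} sits on the fourth (integral) term, not the third (boundary) term, so the third term's factor is $(-1)^{k+1}\cdot(-1)^{k}=-1$ while the fourth term's is $(-1)\cdot(-1)^{k+1}\cdot(-1)^{k}=+1$ --- but your displayed formula and final identity carry the correct signs.
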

%
%
%
%

It is seen from Theorem \ref{IdentityForUn} that the decay rate of  $\hat u_n^L$ for $u(x)$  with a fixed regularity index $\mu$ is determined by the fractional integrals of $P_n(x).$  Indeed, we have the following bound.
\begin{lmm}\label{BoundIntLeg}
For $\mu>-1/2$ and $n\ge \mu+1,$ we have
\begin{equation}\label{BoundIntLeg-0}
\max_{|x|\le 1}\Big\{\big|(I_{1-}^{\mu+1} P_n)(x)\big|, \big|(I_{-1+}^{\mu+1} P_n)(x)\big|\Big\}\le  \frac{1}{2^{\mu+1} \sqrt \pi }
	\frac{ \Gamma( (n-\mu)/ 2)}{\Gamma( (n+\mu+3)/2)}.
\end{equation}
\end{lmm}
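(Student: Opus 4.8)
The plan is to convert the fractional integrals of $P_n$ in \eqref{twoGGFs} into a single generalised Gegenbauer function of fractional degree (GGF-F) and then invoke its uniform weighted bound. First I would dispose of one of the two quantities by symmetry: the parity relation $P_n^{(-\mu-1,\mu+1)}(x)=(-1)^nP_n^{(\mu+1,-\mu-1)}(-x)$ together with \eqref{twoGGFs} gives $(I_{-1+}^{\mu+1}P_n)(x)=(-1)^n(I_{1-}^{\mu+1}P_n)(-x)$, so the two maxima over $[-1,1]$ coincide and it suffices to bound $(I_{1-}^{\mu+1}P_n)$.

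The heart of the matter is to identify $(I_{1-}^{\mu+1}P_n)$ with a GGF-F. Since $(\mu+1)+(-\mu-1)=0$, the hypergeometric form of the Jacobi polynomial reads $P_n^{(\mu+1,-\mu-1)}(x)=P_n^{(\mu+1,-\mu-1)}(1)\,{}_2F_1\!\big(-n,n+1;\mu+2;\tfrac{1-x}{2}\big)$ with $P_n^{(\mu+1,-\mu-1)}(1)=\Gamma(n+\mu+2)/(n!\,\Gamma(\mu+2))$. Applying Euler's transformation ${}_2F_1(a,b;c;z)=(1-z)^{c-a-b}{}_2F_1(c-a,c-b;c;z)$ with $a=-n$, $b=n+1$, $c=\mu+2$ (so $c-a-b=\mu+1$) and recognising ${}_2F_1\!\big(n+\mu+2,\mu+1-n;\mu+2;\tfrac{1-x}{2}\big)$ as the GGF-F ${}^{r}G_{\nu}^{(\lambda)}(x)$ with $\lambda=\mu+\tfrac32$ and degree $\nu=n-\mu-1$ (generally non-integer), substitution into \eqref{twoGGFs} yields
\begin{equation*}
(I_{1-}^{\mu+1}P_n)(x)=\frac{(1-x^2)^{\mu+1}}{2^{\mu+1}\,\Gamma(\mu+2)}\;{}^{r}G_{n-\mu-1}^{(\mu+3/2)}(x),\qquad x\in[-1,1],
\end{equation*}
valid on $(-1,1]$ by the convergent series and extended to $x=-1$ by continuity, the exponent $\mu+1=\lambda-\tfrac12$ being exactly what compensates the $(1+x)^{1/2-\lambda}$ endpoint behaviour of ${}^{r}G_\nu^{(\lambda)}$.

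Then, since $\mu>-\tfrac12$ forces $\lambda=\mu+\tfrac32>1$ and $n\ge\mu+1$ forces $\nu=n-\mu-1\ge0$, I would invoke the uniform bound for GGF-Fs collected from \cite{Liu19MC-Optimal,Liu20JAT-Asymptotics}, namely
\[
\max_{|x|\le1}(1-x^2)^{\lambda-1/2}\big|{}^{r}G_\nu^{(\lambda)}(x)\big|\le \frac{\Gamma(\lambda+1/2)}{\sqrt\pi}\;\frac{\Gamma((\nu+1)/2)}{\Gamma((\nu+1)/2+\lambda)},
\]
whose right-hand side equals $|{}^{r}G_\nu^{(\lambda)}(0)|$ when $\nu$ is an even integer (by Gauss's second summation theorem), applied here with $\lambda-\tfrac12=\mu+1$. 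Using $\Gamma(\lambda+\tfrac12)=\Gamma(\mu+2)$, $\tfrac{\nu+1}{2}=\tfrac{n-\mu}{2}$ and $\tfrac{\nu+1}{2}+\lambda=\tfrac{n+\mu+3}{2}$, and cancelling the prefactor $1/(2^{\mu+1}\Gamma(\mu+2))$, one lands precisely on \eqref{BoundIntLeg-0}.

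The main obstacle is the bookkeeping in the second step: one has to carry the three hypergeometric parameters through Euler's transformation without error, since any slip in $\lambda$ or $\nu$ corrupts the final $\Gamma$-ratio, and one must separately justify the value at $x=-1$, where ${}^{r}G_\nu^{(\lambda)}$ itself is unbounded but the weight $(1-x^2)^{\mu+1}$ renders the product continuous. A secondary point is that the weighted GGF-F estimate must be available in exactly the $(1-x^2)^{\lambda-1/2}$-weighted form used above; should only a bound with a different weight exponent be on hand among the collected properties, one derives the required form by a direct Sturm--Liouville/energy argument in the spirit of the proof of the $(1-x^2)^{\lambda/2}$-bound, showing that the weighted square of ${}^{r}G_\nu^{(\lambda)}$ is maximised at $x=0$.
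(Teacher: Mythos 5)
Your proposal is correct and follows essentially the same route as the paper: both convert $(I_{1-}^{\mu+1}P_n)(x)$, via the hypergeometric form of $P_n^{(\mu+1,-\mu-1)}$ and Euler's transformation, into $\frac{(1-x^2)^{\mu+1}}{2^{\mu+1}\Gamma(\mu+2)}\,{}_2F_1\big(-n+\mu+1,n+\mu+2;\mu+2;\frac{1\pm x}{2}\big)$ (i.e.\ the weighted GGF-F with $\lambda=\mu+3/2$, $\nu=n-\mu-1$) and then apply the uniform weighted bound from \cite{Liu19MC-Optimal}. The only cosmetic difference is that you dispose of $(I_{-1+}^{\mu+1}P_n)$ by the parity relation while the paper writes out both cases, and your fallback Sturm--Liouville argument is unnecessary since the cited bound is already in the required $(1-x^2)^{\lambda-1/2}$-weighted form.
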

\begin{proof} According to Szeg\"o \cite[p.\! 62]{Szego1975Book},  the generalised Jacobi polynomials  with real parameters $\alpha,\beta$ are defined by the hypergeometric functions as
\begin{equation}\label{BoundIntLeg-1}
P_n^{(\alpha,\beta)}(x) =\frac{(\alpha+1)_{n}}{n !}\, {}_2F_1\Big(\!\!-n, n+\alpha+\beta+1;\alpha+1;\frac{1-x} 2\Big),\quad x\in (-1,1),
\end{equation}
or alternatively,
\begin{equation}\label{BoundIntLeg-2}
P_{n}^{(\alpha, \beta)}(x)=(-1)^{n} \frac{(\beta+1)_{n}}{n !}{ }_{2} F_{1}\Big(\!-n, n+\alpha+\beta+1 ; \beta+1 ; \frac{1+x}{2}\Big),\quad x\in (-1,1).
\end{equation}
Recall  the Euler transform identity (cf. \cite[p.\! 95]{Andrews1999Book}): for $a,b,c \in {\mathbb R}$ and $-c\not \in {\mathbb N}_0,$
\begin{align}\label{Euler}
{}_2F_1(a,b;c;z) =(1-z)^{c-a-b}{}_2F_1(c-a,c-b;c;z),\quad |z|<1.
\end{align}
Taking $a=-n, b=n+\alpha+\beta+1, c=\alpha+1$ and $z=(1-x)/2$ in \eqref{Euler},  we obtain
 \begin{equation}\label{BoundIntLeg-3}
 \begin{split}
{}_2F_1\Big(\!\!-n, n+\alpha+\beta+1;\alpha+1;\frac{1-x} 2\Big)& =\Big(\frac {1+x} 2\Big)^{-\beta}\!
{}_2F_1\Big(\!\!\,n+\alpha+1, -n-\beta;\alpha+1;\frac{1-x} 2\Big)\\
&=\Big(\frac {1+x} 2\Big)^{-\beta}\!
{}_2F_1\Big(\!\! -n-\beta, n+\alpha+1;\alpha+1;\frac{1-x} 2\Big).
   \end{split}
\end{equation}
From \eqref{twoGGFs}, \eqref{BoundIntLeg-1}, \eqref{BoundIntLeg-3} and  ${}_2F_1(a,b;c;0)=1,$ we get
\begin{equation}\label{BoundIntLeg-4}
\begin{split}
(I_{1-}^{\mu+1} P_n)(x)&=
\frac{(1-x)^{\mu+1}}{\Gamma(\mu+2)} \frac{P_{n}^{(\mu+1, -\mu-1)}(x)}{P_{n}^{(\mu+1,-\mu-1)}(1)}=\frac{(1-x)^{\mu+1}}{\Gamma(\mu+2)}\, {}_2F_1\Big(\!\!-n, n+1;\mu+2;\frac{1-x} 2\Big)\\
&=\frac{(1-x^2)^{\mu+1}}{2^{\mu+1}\Gamma(\mu+2)}
{}_2F_1\Big(\!\! -n+\mu+1, n+\mu+2;\mu+2;\frac{1-x} 2\Big).
\end{split}
\end{equation}
Similarly, we can show that 
\begin{equation}\label{BoundIntLeg-5}
\begin{split}
(I_{-1+}^{\mu+1} P_n)(x)
&=\frac{(-1)^n (1-x^2)^{\mu+1}}{2^{\mu+1}\Gamma(\mu+2)}
{}_2F_1\Big(\!\! -n+\mu+1, n+\mu+2;\mu+2;\frac{1+x} 2\Big).
\end{split}
\end{equation}
From Liu et al. \cite[Definition 2.1 \& (4.30)]{Liu19MC-Optimal}, we find that for   $\lambda \ge 1$ and $\nu\ge 0,$
	\begin{equation}\label{BoundIntLeg-6}
	\max_{|x|\le 1}\Big\{	(1-x^2)^{\lambda-\frac 12}\Big |\, {}_2F_1\Big(\!\!-\nu, \nu+2\lambda;\lambda+\frac 1 2;\frac{1\pm x} 2\Big) \Big| \Big\}\le
	 \frac{\Gamma(\lambda+ 1/2)}{\sqrt \pi}
	\frac{ \Gamma( ({\nu}+1)/ 2)}{\Gamma( ({\nu}+1)/2+\lambda)}.
	\end{equation}
Thus, taking $\nu\to n-\mu-1$ and  $\lambda\to \mu+3/2$ in \eqref{BoundIntLeg-6}, leads to
	\begin{equation}\label{BoundIntLeg-7}
	\max_{|x|\le 1}\Big\{	(1-x^2)^{\mu+1}\Big | {}_2F_1\Big(\!\!-n+\mu+1, n+\mu+2;\mu+2;\frac{1\pm x} 2\Big) \Big| \Big\}\le
	 \frac{\Gamma(\mu+2)}{\sqrt \pi}
	\frac{ \Gamma( (n-\mu)/ 2)}{\Gamma( (n+\mu+3)/2)}.
	\end{equation}
Finally the bound \eqref{BoundIntLeg-0} follows from  \eqref{BoundIntLeg-4}, \eqref{BoundIntLeg-5} and \eqref{BoundIntLeg-7}.
\end{proof}

\subsection{$L^\infty$-estimates of Legendre orthogonal projections} With the above preparations, we are now ready to analyse  the  $L^\infty$-error estimate of the $L^2$-orthogonal projection:
\begin{equation}\label{Cbexp}
u(x)=\sum_{n=0}^{\infty}\hat u_n^L\, P_n(x),\quad\;\;   (\pi_N^L  u)(x)= \sum_{n=0}^N \hat u_n^L\, P_n(x).
\end{equation}
%
Below, we present the approximation results on the  $L^\infty$-estimate and the weighted $L^\infty$-estimate.  We shall illustrate  that  the former is suboptimal for functions with interior singularity, but optimal for the endpoint singularity, while the latter is optimal in both cases. Such convergence behaviours  were numerically  observed in    \cite{Wang18AML-new,Wang20arXiv-How}, but lack of  theoretical justifications.
\begin{thm}\label{TruncLeg}     Let $u,u',\ldots,u^{(k-1)}\in {\rm AC}([-1,1])$ with $k \in \mathbb{N}.$
\begin{itemize}
\item[(i)] For  $\mu \in (k-1, k)$ and $\theta\in(-1,1),$ if  ${ }^{C}\! D_{\theta+}^{\mu} u \in  {\rm BV}([\theta, 1])$ and
  ${ }^{C}\! D_{\theta-}^{\mu} u \in  {\rm BV}([-1,\theta]),$ then   for  all $N\ge \mu>1/2,$
		\begin{equation}\label{FracLinfA00}
  \begin{split}
  \big\|u-\pi_N^L  u\big\|_{L^\infty (\Omega)}\le \frac{1}{2^{\mu-1} (\mu-1/2)\sqrt{\pi}}\frac{ \Gamma( ({N-\mu+1})/ 2)}{\Gamma( ({N+\mu})/2)} \, U_\theta^{(\mu)},
  \end{split}
  \end{equation}
 and for all $N\ge \mu,$
  \begin{equation}\label{FracLinfB}
  \begin{split}
  \big\|(1-x^2)^{\frac 1 4 }(u-\pi_N^L  u)\big\|_{L^\infty (\Omega)}\le \frac {1 }{2^{\mu-1} \mu \pi}\frac{ \Gamma( ({N-\mu+1})/2)}{\Gamma( ({N+\mu+1})/2)}\, \, U_\theta^{(\mu)},
  \end{split}
  \end{equation}
		where we denoted
\begin{equation}\label{seminormF0}
U_\theta^{(\mu)}:= V_{[-1,\theta]}[{ }^{C}\! D_{\theta-}^{\mu} u]  +V_{[\theta,1]}[{ }^{C}\!D_{\theta+}^{\mu} u]+ |{ }^{C}\! D_{\theta-}^{\mu} u(\theta-)| + |{ }^{C}\! D_{\theta+}^{\mu} u(\theta+)|.
\end{equation}
\item[(ii)]  If $u^{(k)}\in {\rm BV}(\bar \Omega)$ with $\Omega=(-1,1),$ then the estimates  \eqref{FracLinfA00}-\eqref{FracLinfB} with $\mu=k$ hold,  but the total variation $V_{\bar\Omega}[u^{(k)}]$ is in place of $U_\theta^{(\mu)}.$
\end{itemize}
\end{thm}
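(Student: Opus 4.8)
The plan is to derive the $L^\infty$-bounds directly from the fractional formula for the Legendre coefficient in Theorem \ref{IdentityForUn} together with the uniform bound on the fractional integrals of $P_n$ in Lemma \ref{BoundIntLeg}. First I would write, for $N\ge\mu$,
\begin{equation*}
u(x)-(\pi_N^L u)(x)=\sum_{n=N+1}^\infty \hat u_n^L P_n(x),
\end{equation*}
so that, using $|P_n(x)|\le 1$ on $[-1,1]$,
\begin{equation*}
\|u-\pi_N^L u\|_{L^\infty(\Omega)}\le \sum_{n=N+1}^\infty |\hat u_n^L|.
\end{equation*}
Then I would estimate a single $|\hat u_n^L|$ from \eqref{HatUnCaseC}: apply the triangle inequality, bound the two Riemann--Stieltjes integrals using the first inequality of \eqref{integralProp} (with $\|\,\cdot\,\|_\infty$ replaced by the sup of $|I_{1-}^{\mu+1}P_n|$ resp. $|I_{-1+}^{\mu+1}P_n|$ over the relevant subinterval, and the total variation of ${ }^{C}\!D_{\theta\pm}^{\mu}u$), and bound the two boundary terms by $|{ }^{C}\!D_{\theta\pm}^{\mu}u(\theta\pm)|$ times the same sup. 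Invoking Lemma \ref{BoundIntLeg} and collecting all four pieces into $U_\theta^{(\mu)}$ from \eqref{seminormF0} gives
\begin{equation*}
|\hat u_n^L|\le \frac{2n+1}{2}\cdot\frac{1}{2^{\mu+1}\sqrt\pi}\,\frac{\Gamma((n-\mu)/2)}{\Gamma((n+\mu+3)/2)}\,U_\theta^{(\mu)},\qquad n\ge\mu+1.
\end{equation*}

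Next I would carry out the summation. Writing $a_n:=\frac{2n+1}{2}\frac{\Gamma((n-\mu)/2)}{\Gamma((n+\mu+3)/2)}$, the key observation is that the ratio $\Gamma((n-\mu)/2)/\Gamma((n+\mu+3)/2)$ decays like $n^{-(\mu+3/2)}$, so $a_n\sim c\,n^{-\mu-1/2}$ and the tail series converges for $\mu>1/2$. To get the clean closed-form constant in \eqref{FracLinfA00} I expect one can telescope: one looks for a sequence $b_n$ with $b_n-b_{n+1}=a_n$ (up to the constant $2^{\mu+1}\sqrt\pi$), and a natural candidate is $b_n = C\,\frac{\Gamma((n-\mu)/2)}{\Gamma((n+\mu+1)/2)}$ for a suitable constant $C$ depending on $\mu$; using the functional equation $\Gamma(z+1)=z\Gamma(z)$ one checks that $\frac{\Gamma((n-\mu)/2)}{\Gamma((n+\mu+1)/2)}-\frac{\Gamma((n-\mu+2)/2)}{\Gamma((n+\mu+3)/2)}$ reproduces $a_n$ after multiplying by $1/(\mu-1/2)$. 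Hence $\sum_{n=N+1}^\infty a_n$ telescopes to a single Gamma-ratio $\propto \Gamma((N-\mu+1)/2)/\Gamma((N+\mu)/2)$, which yields precisely \eqref{FracLinfA00}. For the weighted estimate \eqref{FracLinfB} I would instead use the sharper bound $(1-x^2)^{1/4}|P_n(x)|\le \sqrt{2/(\pi n)}$ (a standard pointwise bound for Legendre polynomials), so that $\|(1-x^2)^{1/4}(u-\pi_N^L u)\|_{L^\infty}\le \sqrt{2/\pi}\sum_{n=N+1}^\infty n^{-1/2}|\hat u_n^L|$, which contributes one more half power of $n$ in the denominator, shifts the telescoping and produces the Gamma-ratio $\Gamma((N-\mu+1)/2)/\Gamma((N+\mu+1)/2)$ with the constant $1/(2^{\mu-1}\mu\pi)$.

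For part (ii), the case $\mu=k$, I would simply observe that the hypotheses $u^{(k)}\in{\rm BV}(\bar\Omega)$ imply those of part (i) with $\mu=k$ on each subinterval (BV on $\bar\Omega$ restricts to BV on $[-1,\theta]$ and $[\theta,1]$, and the AC hypothesis on $u^{(k-1)}$ is the same), so Corollary \ref{sequal1} applies: the coefficient formula collapses to the single integral \eqref{HatUnCases1} over $[-1,1]$ against $(I_{1-}^{k+1}P_n)$, and \eqref{integralProp} bounds it by $\max_{|x|\le1}|(I_{1-}^{k+1}P_n)(x)|\cdot V_{\bar\Omega}[u^{(k)}]$. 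Lemma \ref{BoundIntLeg} with $\mu=k$ gives the same Gamma-ratio factor, so the very same summation goes through verbatim with $U_\theta^{(\mu)}$ replaced by $V_{\bar\Omega}[u^{(k)}]$; one should note that here there is no boundary term because the splitting rule in Lemma \ref{LmmCarter2000Book-1} has already absorbed the jump of $u^{(k)}$ at $\theta$ into the RS integral over $[-1,1]$.

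The main obstacle I anticipate is identifying the exact telescoping identity for the Gamma-ratio sums so that the constants come out in the stated closed form rather than as an unspecified $O(1)$ constant; this is where the precise use of $\Gamma(z+1)=z\Gamma(z)$ and a careful bookkeeping of the shift $n\mapsto n+2$ (parity in the $(n\pm\mu)/2$ arguments) is essential. A secondary technical point is justifying the interchange of summation and $L^\infty$-norm (i.e. that the Legendre series of $u$ converges uniformly, or at least that $\sum|\hat u_n^L|<\infty$), which follows a posteriori from the very coefficient bound just established since $\mu>1/2$.
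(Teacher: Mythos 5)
Your proposal follows essentially the same route as the paper: the coefficient bound from Theorem \ref{IdentityForUn} combined with \eqref{integralProp} and Lemma \ref{BoundIntLeg}, the crude bound $|P_n|\le 1$ for \eqref{FracLinfA00} versus the Bernstein inequality for \eqref{FracLinfB}, a step-two telescoping of Gamma ratios for the tail sum, and Corollary \ref{sequal1} for part (ii). The only adjustment needed is in your telescoping candidate: the correct antiderivative is $b_n=\Gamma((n-\mu)/2)/\Gamma((n+\mu-1)/2)$ (after first absorbing the factor $(2n+1)/2$ into the Gamma ratio via $\Gamma((n+\mu+3)/2)=\tfrac{n+\mu+1}{2}\Gamma((n+\mu+1)/2)$), which gives $b_n-b_{n+2}=(\mu-\tfrac12)\Gamma((n-\mu)/2)/\Gamma((n+\mu+1)/2)$ and, together with the monotonicity bound $b_{N+2}\le b_{N+1}$, the stated constant.
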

\begin{proof}
Using the identity in Theorem \ref{IdentityForUn} and the   bound in Lemma \ref{BoundIntLeg}, we obtain from \eqref{integralProp}  that 
\begin{equation}\label{HatUnCaseC0}
\begin{split}
|\hat u_n^L| &= \frac{2n+1}{2}
\max_{x\in \bar\Omega} \big\{\big|(I_{1-}^{\mu+1} P_n)(x)\big|, \big|(I_{-1+}^{\mu+1} P_n)(x)\big|\big\}\,
U_\theta^{(\mu)}\\
& \le \frac{(2n+1)\Gamma( ({n-\mu})/ 2)}{ 2^{\mu+2} \sqrt{\pi}\,\Gamma( ({n+\mu}+3)/2)} \,U_\theta^{(\mu)}.
\end{split}
\end{equation}
%
%
%
%

We first prove the error bound \eqref{FracLinfA00}.  For simplicity,  we   denote
	\begin{equation}\label{Snsigma}
	{\mathcal S}_n^\mu:=\frac{ \Gamma( ({n-\mu})/ 2)}{\Gamma( ({n+\mu+1})/2)},\quad
	{\mathcal T}_n^\mu:=\frac{ \Gamma( ({n-\mu})/ 2)}{\Gamma( ({n+\mu}-1)/2)}.
	\end{equation}
Using  the identity $z\Gamma(z)=\Gamma(z+1)$,  we find readily that
	\begin{equation}\begin{split}\label{FracLinfA-1}
	{\mathcal T}_n^\mu-{\mathcal T}_{n+2}^\mu&=
	\frac{{n+\mu}-1} 2\frac{ \Gamma( ({n-\mu})/ 2)}{\Gamma( ({n+\mu+1})/2)}-\frac{{n-\mu}} 2\frac{ \Gamma( ({n-\mu})/ 2)}{\Gamma( ({n+\mu+1})/2)}\\
	&=(\mu-1/2)\frac{ \Gamma( ({n-\mu})/ 2)}{\Gamma( ({n+\mu+1})/2)}= (\mu-1/2)\,{\mathcal S}_n^\mu.
	\end{split}\end{equation}
  As $|P_n(x)|\le 1,$
	we derive from  \eqref{HatUnCaseC0} that
	\begin{equation}\begin{split}\label{FracLinfA-2}
	\big|(u -&\pi_N^L  u)(x)\big|\le \sum_{n=N+1}^{\infty}|\hat u_n^{L}| \le
\frac{U_\theta^{(\mu)}}{2^{\mu+2} \sqrt{\pi}}\sum_{n=N+1}^{\infty}
	 \frac{ (2n+1)\Gamma( ({n-\mu})/ 2)}{\Gamma( ({n+\mu+3})/2)}\\
&\le \frac{U_\theta^{(\mu)}}{2^{\mu} \sqrt{\pi}}\sum_{n=N+1}^{\infty}
	 \frac{ \Gamma( ({n-\mu})/ 2)}{\Gamma( ({n+\mu+1})/2)}
= \frac{U_\theta^{(\mu)}}{2^{\mu} (\mu-1/2)\sqrt{\pi}}  \sum_{n=N+1}^{\infty}\big\{{\mathcal T}_n^\mu-{\mathcal T}_{n+2}^\mu\big\}
\\&=\frac{U_\theta^{(\mu)}}{2^{(\mu)} (\mu-1/2)\sqrt{\pi}} \big\{{\mathcal T}_{N+1}^\mu+{\mathcal T}_{N+2}^\mu\big\}.
	\end{split}\end{equation}
	Since $\mu>1/2,$ we obtain from \eqref{gammratio} immediately that
	\begin{equation}\label{Tsigmabnd}
	{\mathcal T}_{N+2}^\mu={\mathcal R}_{\mu-1/2}^0(1+(N-\mu)/2) \le {\mathcal R}_{\mu-1/2}^0(1+(N-\mu-1)/2)={\mathcal T}_{N+1}^\mu.
	\end{equation}
Therefore, we have from the above that
\begin{equation*}\begin{split}
	\big|(u -&\pi_N^L  u)(x)\big|\le  \frac{ 2  {\mathcal T}_{N+1}^\mu\, U_\theta^{(\mu)}}{2^{\mu} (\mu-1/2)\sqrt{\pi}}=
	\frac{U_\theta^{(\mu)}}{2^{\mu-1} (\mu-1/2)\sqrt{\pi}}\frac{ \Gamma( ({N-\mu}+1)/ 2)}{\Gamma( ({N+\mu})/2)}.
\end{split}\end{equation*}	
This leads to  the error bound  \eqref{FracLinfA00}.
\medskip

We now turn to the proof of  \eqref{FracLinfB}. 
Recall the Bernstein inequality (cf.\! \cite{Antonov1981VLUM-An}):
		\begin{equation}\label{IneqtyLeg-A}
	\max_{|x|\le 1}\big\{(1-x^2)^{\frac 14} |P_n(x)|\big\}\le \sqrt{ \frac{2}{ \pi}}\Big(n+\frac{1}{2}\Big)^{-\frac 12},\quad n\ge 0.
	\end{equation} 
Thus we infer from \eqref{HatUnCaseC0}  and \eqref{IneqtyLeg-A} that
		\begin{equation}\begin{split}\label{FracLinfB-1}
e_N(x)& := \big|(1-x^2)^{\frac 14}  (u -\pi_N^L  u)(x)\big| \le \sum_{n=N+1}^{\infty}  \max_{|x|\le 1}\big\{(1-x^2)^{\frac 14} |P_n(x)|\big\}\, |\hat u_n^L |
	 \\
&\le
\frac{U_\theta^{(\mu)}}{2^{\mu} \pi}\sum_{n=N+1}^{\infty}
	\sqrt{\frac n2+\frac 1 4}\, \frac{ \Gamma( ({n-\mu})/ 2)}{\Gamma( ({n+\mu+3})/2)}.
	\end{split}\end{equation}
Considering   \eqref{IneqtyLeg-1} with $z=z_n=(n+\mu+1)/2$ and  $c=1/4-(\mu+1)/2\, (\le 1/4)$, we find  from  its monotonicity    that
$\widehat {\mathcal R}_c(z_n)\ge  \widehat {\mathcal R}_c(\infty)=1$   (cf. \eqref{NRatioGammaV2-1}).  This immediately implies
	\begin{equation}\begin{split}\label{FracLinfB-2}
\frac{\sqrt{n/2+1/4}}{\Gamma( ({n+\mu+3})/2)}&\le
	 \frac{1}{\Gamma( ({n+\mu})/2+1)},
	\end{split}\end{equation}
so we can bound the summation in 	\eqref{FracLinfB-1} by
		\begin{equation}\begin{split}\label{FracLinfB-3}
	\sum_{n=N+1}^{\infty}
	 \sqrt{\frac n2+\frac 14}\,\frac{ \Gamma( ({n-\mu})/ 2)}{\Gamma( ({n+\mu+3})/2+1)}&\le\sum_{n=N+1}^{\infty}
	 \frac{ \Gamma( ({n-\mu})/ 2)}{\Gamma( ({n+\mu})/2+1)}.
	\end{split}\end{equation}
	Similarly,  denoting
\begin{equation}\label{Snsigma-2}
\widehat{\mathcal S}_n^\mu:=\frac{ \Gamma( ({n-\mu})/ 2)}{\Gamma( ({n+\mu})/2+1)},\quad
\widehat{\mathcal T}_n^\mu:=\frac{ \Gamma( ({n-\mu})/ 2)}{\Gamma( ({n+\mu})/2)} ,
\end{equation}
we  find readily that
\begin{equation}\begin{split}\label{FracLinfB-4}
\widehat{\mathcal T}_n^\mu-\widehat{\mathcal T}_{n+2}^\mu& =
\frac{{n+\mu}} 2\frac{ \Gamma( ({n-\mu})/ 2)}{\Gamma( ({n+\mu})/2+1)}-\frac{{n-\mu}} 2\frac{ \Gamma( ({n-\mu})/ 2)}{\Gamma( ({n+\mu})/2+1)}
= \mu\,\widehat{\mathcal S}_n^\mu.
\end{split}\end{equation}
Following the same lines as in the derivation of \eqref{FracLinfA-2},  we can get 
\begin{equation}\begin{split}\label{FracLinfB-5}
e_N(x) & \le
\frac{U_\theta^{(\mu)}}{2^{\mu}  \mu\pi} \big\{\widehat{\mathcal T}_{N+1}^\mu+\widehat{
\mathcal T}_{N+2}^\mu\big\}
\le \frac{ 2 \widehat{\mathcal T}_{N+1}^\mu}{2^{\mu}  (\mu-1)\pi}  \, U_\theta^{(\mu)},
	\end{split}\end{equation}
where we used the property derived from \eqref{gammratio} with $\mu>1,$ that is,
	\begin{equation*}\label{Tsigmabnd-2}
\widehat{\mathcal T}_{N+2}^\mu={\mathcal R}_{\mu-1}^0(1+(N-\mu)/2) \le {\mathcal R}_{\mu-1}^0(1+(N-\mu-1)/2)=\widehat{\mathcal T}_{N+1}^\mu.
	\end{equation*}
Then  the estimate \eqref{FracLinfB}  follows from \eqref{Snsigma-2} and \eqref{FracLinfB-5} straightforwardly.

\smallskip
For $\mu=k,$  using the identity in Corollary \ref{sequal1} to derive the bound in \eqref{HatUnCaseC0}, and then following the same lines, we can obtain the estimates in (ii).
\end{proof}

 Under the regularity assumption in Theorem \ref{TruncLeg},  we infer from \eqref{NRatioGammaV2-1} and the estimates \eqref{FracLinfA00}-\eqref{FracLinfB} the convergence behaviour:
\begin{equation}\label{converorg}
\|u-\pi_N^L  u\|_{L^\infty (\Omega)}=O(N^{-\mu+1/2}),\quad  \|(1-x^2)^{\frac 1 4 }(u-\pi_N^L  u)\|_{L^\infty (\Omega)}=O(N^{-\mu}),
\end{equation}
which exhibit  a half-order convergence difference. Moreover,  the estimate \eqref{FracLinfB-1} implies
\begin{equation}\label{converorg2}
 \big| (u -\pi_N^L  u)(x)\big|= (1-x^2)^{-\frac 14}\, O(N^{-\mu}),\quad\forall\, x\in [-a,a]\subset (-1,1).
\end{equation}
For a  function with an interior singularity  in $[-a,a]$ with $|a|<1$,   one expects the optimal order  $O(N^{-\mu}).$
Note  from  \eqref{FracLinfA-2} and \eqref{FracLinfB-1} that the bounds essentially depend on the maximum of $|P_n(x)|$
and $(1-x^2)^{1/4}|P_n(x)|,$ which behave very differently near the endpoints  as
shown in Figure \ref{PnWithWeight}.  In fact, $|P_n(x)|=O(n^{-1/2})$ for $x\in [-a,a],$ but it is overestimated by the bound $1$ at $x=\pm 1.$ However, from \eqref{IneqtyLeg-A}, we have $(1-x^2)^{1/4}|P_n(x)|\le Cn^{-1/2}$ for all $x\in [-1,1].$ This is actually the cause of  the lost order  in  the (non-weighted) $L^\infty$-estimate in \eqref{converorg}.
\begin{figure}[!ht]
	\begin{center}
		{~}\hspace*{-20pt}	
\includegraphics[width=0.42\textwidth]{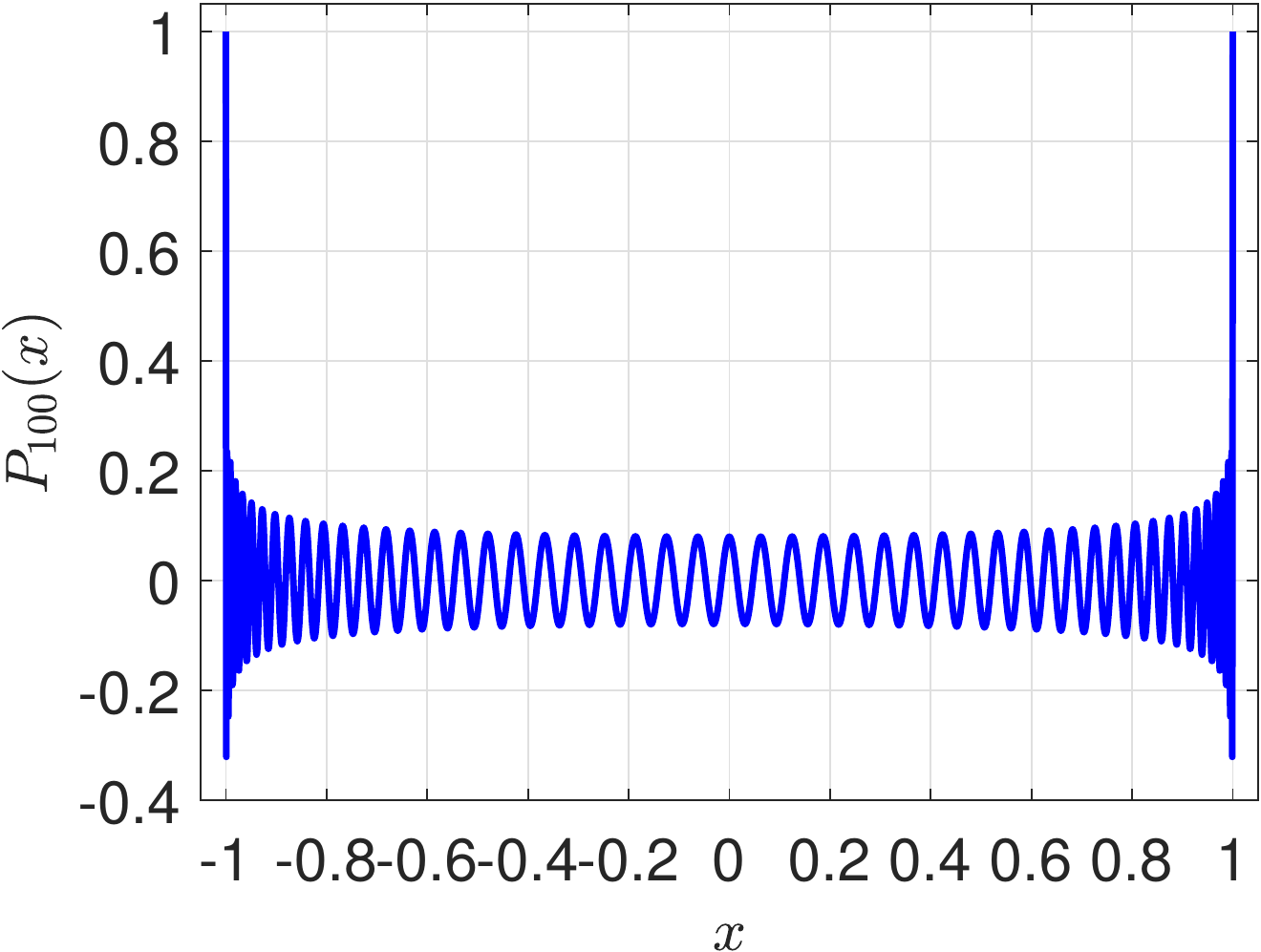}\qquad\quad
		\includegraphics[width=0.42\textwidth]{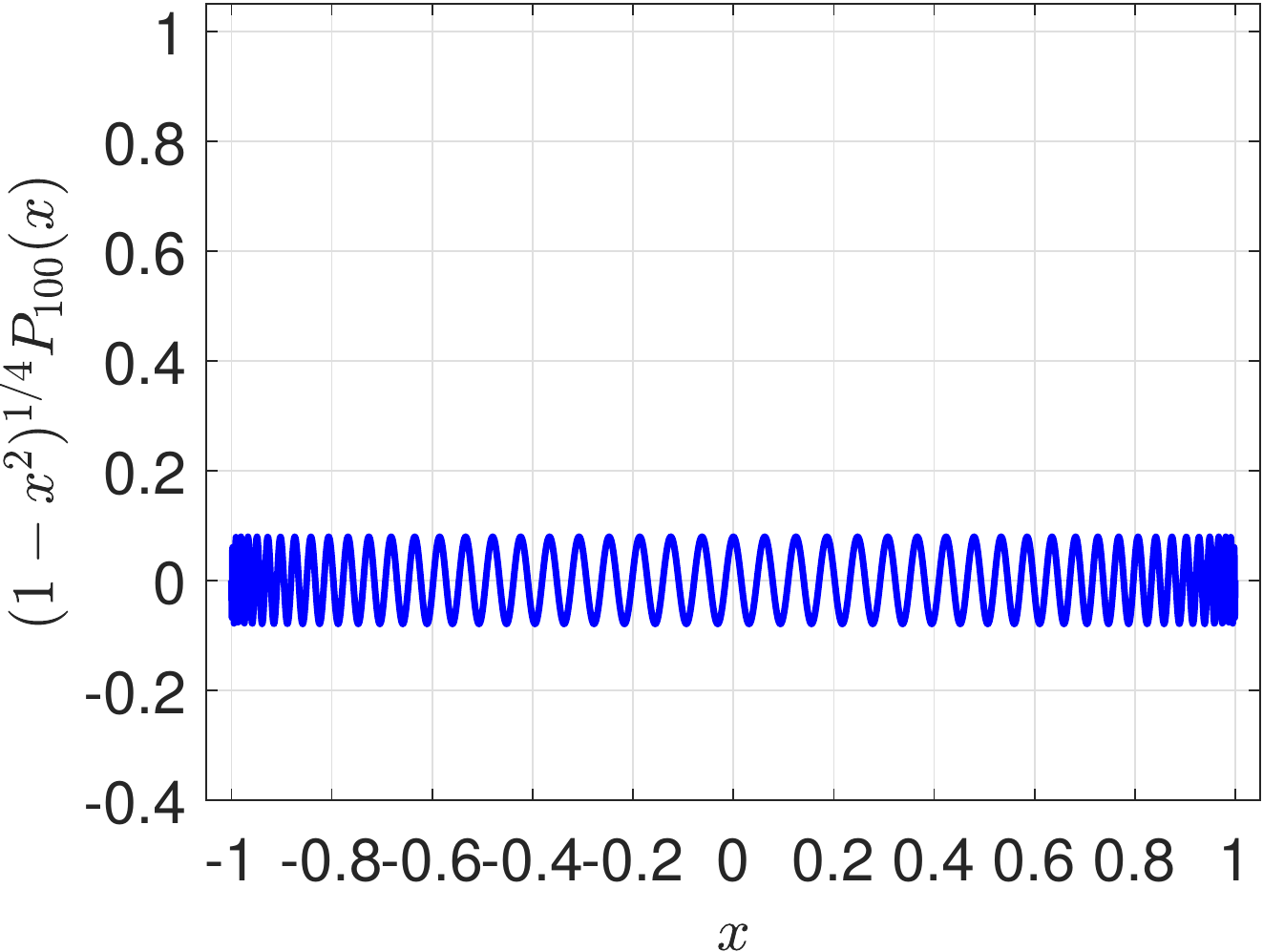}
		\caption{$P_n(x)$ and $(1-x^2)^{\frac 14}P_n(x)$ with $x\in [-1,1]$ and $n=100.$}
		\label{PnWithWeight}
	\end{center}
\end{figure}

With these analysis tools  at hand,  we  further examine $u(x)=|x|$ (as a motivative example in Trefethen  \cite{Trefethen2008SIREV}), for which
 Wang \cite{Wang18AML-new,Wang20arXiv-How} observed the order $O(N^{-1})$ numerically,  but  the order is $O(N^{-1/2})$ based on the error estimate of Legendre approximation in $L^\infty$-norm.    From the pointwise error plots in Figure \ref{PoWisePosi} (left),  we see the largest error occurs at the singular point $x=0.$ Indeed, we have the following estimates (with the proof given in Appendix  \ref{AppendixC}), which are sharp as shown in Figure \ref{PoWisePosi} (right).  
  \begin{theorem}\label{01estimate} Consider $u(x)=|x|$ for  $x\in [-1,1].$  Then for $N>2,$ we have
 \begin{equation}\label{un|x|-1+1}
\begin{split}
\big|(u -\pi_N^L  u)(0)\big|\le \frac{2}{\pi(N-1)}\,; \; \quad \big|(u-\pi_N^L  u)(\pm 1)\big|&\le
\frac{1 }{2\sqrt{\pi}}\frac{\Gamma( N/2-1)} {\Gamma( N/2+1/2)}.
\end{split}
\end{equation}
 \end{theorem}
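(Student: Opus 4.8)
The plan is to carry out the argument promised in Appendix~\ref{AppendixC} by specialising the coefficient identity of Corollary~\ref{sequal1} to $u(x)=|x|$ (so $\theta=0$, $k=1$), turning the resulting exact formula for $\hat u_n^L$ into an expression in central binomial coefficients, and then collapsing the two tail sums (at $x=0$ and at $x=\pm1$) by \emph{telescoping}. First I would compute the coefficients exactly. Since $u\in{\rm AC}([-1,1])$ while $u'(x)={\rm sgn}(x)\in{\rm BV}([-1,1])$ with its entire variation in a single jump of height $2$ at the origin, the splitting rule of Lemma~\ref{LmmCarter2000Book-1} identifies the Stieltjes differential $\mathrm d\{u'\}$ with $2\delta_0$, so Corollary~\ref{sequal1} with $k=1$ gives $\hat u_n^L=(2n+1)\,(I_{1-}^{2}P_n)(0)$ for $n\ge2$. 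Evaluating $(I_{1-}^{2}P_n)(0)$ through \eqref{Pnmumu4} needs the value at $0$ of the ultraspherical Jacobi polynomial $P_{n-2}^{(2,2)}$, obtained from its hypergeometric representation (Szeg\"o \cite[p.\! 64]{Szego1975Book}) and the classical summation ${}_2F_1\big(a,b;\tfrac{a+b+1}{2};\tfrac12\big)=\Gamma(\tfrac12)\Gamma(\tfrac{a+b+1}{2})\big/\big[\Gamma(\tfrac{a+1}{2})\Gamma(\tfrac{b+1}{2})\big]$. This yields $\hat u_{2m+1}^L=0$ (also immediate from the evenness of $|x|$) and, with $c_m:=\binom{2m}{m}4^{-m}$,
\[
\hat u_{2m}^L=(-1)^{m-1}\frac{(4m+1)(2m-2)!}{4^{m}(m-1)!(m+1)!}=(-1)^{m-1}\frac{(4m+1)c_{m-1}}{4m(m+1)},\qquad m\ge1 .
\]

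For the estimate at $x=0$, I would write $(u-\pi_N^Lu)(0)=\sum_{n>N}\hat u_n^L P_n(0)$ (only even $n$ survive) and insert the special value $P_{2m}(0)=(-1)^m c_m$. From the Wallis recursion $c_m=\tfrac{2m-1}{2m}c_{m-1}$ one checks the elementary identity $c_{m-1}-c_{m+1}=\tfrac{4m+1}{4m(m+1)}c_{m-1}$, whence $\hat u_{2m}^L P_{2m}(0)=-(c_{m-1}c_m-c_mc_{m+1})$. Hence, with $m_0:=\lfloor N/2\rfloor+1$ (the least $m$ with $2m>N$), the series telescopes and $\big|(u-\pi_N^Lu)(0)\big|=c_{m_0-1}c_{m_0}$. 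Bounding each factor by the classical $c_j<1/\sqrt{\pi j}$ and splitting on the parity of $N$ (so that $2\sqrt{(m_0-1)m_0}\ge N-1$) produces the claimed $2/[\pi(N-1)]$; the hypothesis $N>2$ keeps $N-1$ and $m_0-1$ positive.

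For the estimate at $x=\pm1$, evenness gives $(u-\pi_N^Lu)(\pm1)=\sum_{n>N}\hat u_n^L$ (only even $n$, $P_{2m}(\pm1)=1$), an alternating series with terms $b_m:=|\hat u_{2m}^L|=\tfrac{(4m+1)c_{m-1}}{4m(m+1)}$ decreasing to $0$. The naive alternating-series bound $|\sum_{n>N}\hat u_n^L|\le b_{m_0}$ is too lossy here (it overshoots the target by a factor tending to $2$ as $N\to\infty$), so instead I would pair consecutive terms: $\hat u_{2m}^L+\hat u_{2m+2}^L=(-1)^{m-1}(b_m-b_{m+1})$, and verify, again from $c_m=\tfrac{2m-1}{2m}c_{m-1}$, the equally elementary identity $b_m-b_{m+1}=d_m-d_{m+2}$ with $d_m:=c_{m-1}/(2m)$. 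Since all paired blocks carry the common sign $(-1)^{m_0-1}$, the sum telescopes to $\big|(u-\pi_N^Lu)(\pm1)\big|=d_{m_0}=c_{m_0-1}/(2m_0)$. Rewriting the right-hand side via $\Gamma(j+\tfrac12)=j!\,c_j\sqrt\pi$ and using $c_{m_0-1}<1/\sqrt{\pi(m_0-1)}$ together with the monotonicity $p\,c_p<(p+1)c_{p-1}$ — once more with a parity split on $N$ — gives $d_{m_0}\le\tfrac1{2\sqrt\pi}\,\Gamma(N/2-1)/\Gamma(N/2+1/2)$.

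The routine ingredients are the Jacobi special value in the first step and the Wallis inequalities at the end. The crux is spotting the two telescoping identities, and especially the pairing identity $b_m-b_{m+1}=d_m-d_{m+2}$ of the third step: it is precisely what upgrades the lossy ``first term of an alternating series'' estimate to the sharp value $d_{m_0}$ (which is $\approx b_{m_0}/2$), and this is what makes the stated $\Gamma$-ratio attainable — a plain Bernstein/Lemma~\ref{BoundIntLeg}-type bound on $(I_{1-}^2P_n)(0)$ would only give the $O(N^{-1/2})$ of \eqref{converorg}. A secondary care point is the bookkeeping around $m_0$ (parity of $N$) and the correct handling of the Stieltjes integral at the jump of $u'$.
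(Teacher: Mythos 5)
Your proposal is correct and takes essentially the same route as the paper's proof in Appendix \ref{AppendixC}: the exact coefficient formula obtained from Corollary \ref{sequal1} with $k=1$ (your $\hat u_{2m}^L$ agrees with the paper's \eqref{un|x|}), the value of $P_{2m}(0)$, and a telescoping of the tail, with the alternating tail at $x=\pm 1$ first grouped into consecutive pairs exactly as the paper does. The only difference is sharper bookkeeping rather than a new idea: writing everything in terms of $c_m=\Gamma(m+1/2)/(\sqrt{\pi}\,m!)$ you telescope \emph{exactly} (your identities $c_{m-1}-c_{m+1}=\tfrac{4m+1}{4m(m+1)}c_{m-1}$ and $b_m-b_{m+1}=d_m-d_{m+2}$ do check out, giving the closed values $c_{m_0-1}c_{m_0}$ and $c_{m_0-1}/(2m_0)$ before the final Wallis/Gamma-ratio bounds), whereas the paper first majorizes each term, respectively each pair, by a Gamma-ratio difference and then telescopes — both arguments land on the same stated bounds.
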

 \begin{figure}[!ht]
	\begin{center}
		{~}\hspace*{-20pt}	
\includegraphics[width=0.42\textwidth]{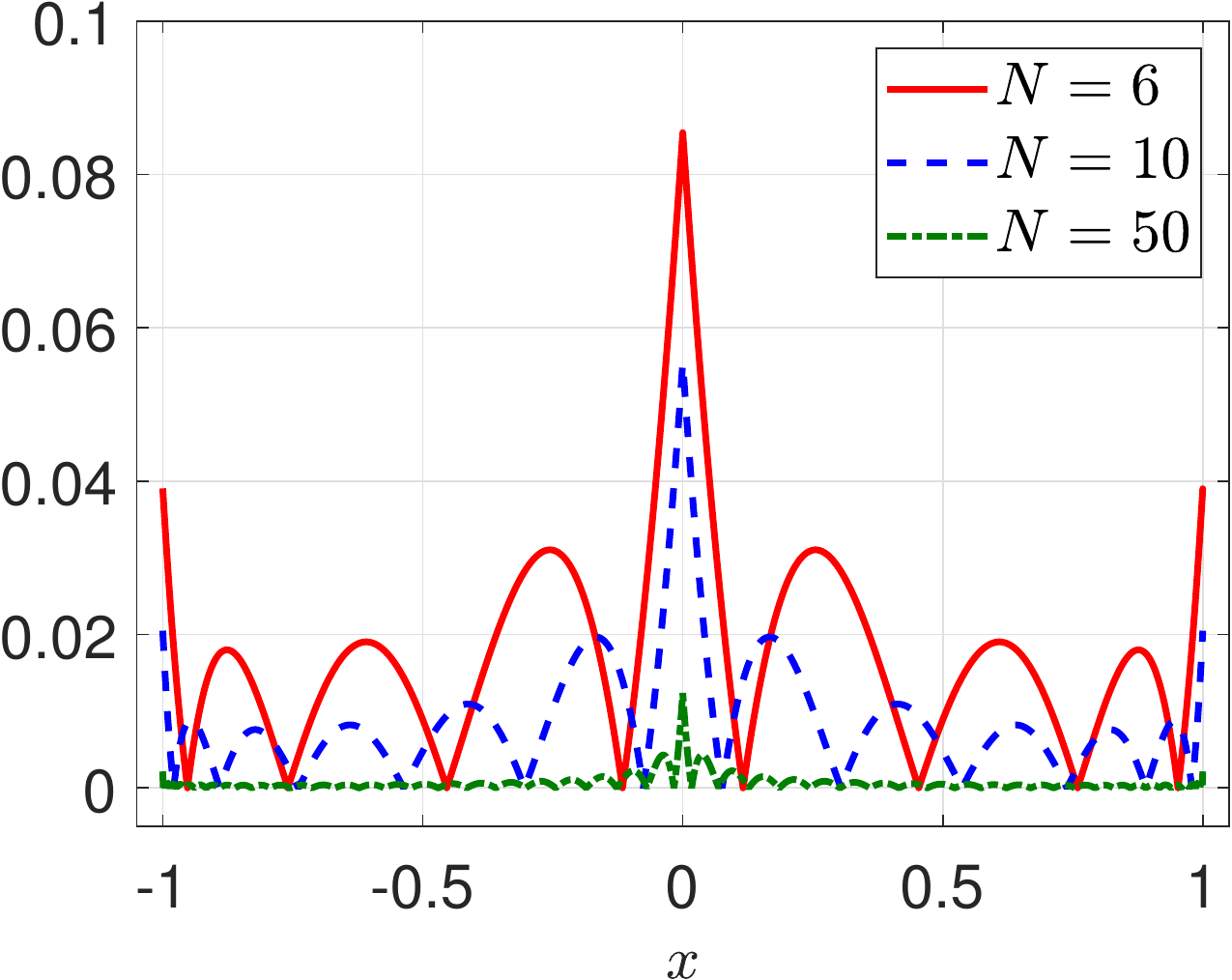}\qquad\quad
		\includegraphics[width=0.42\textwidth]{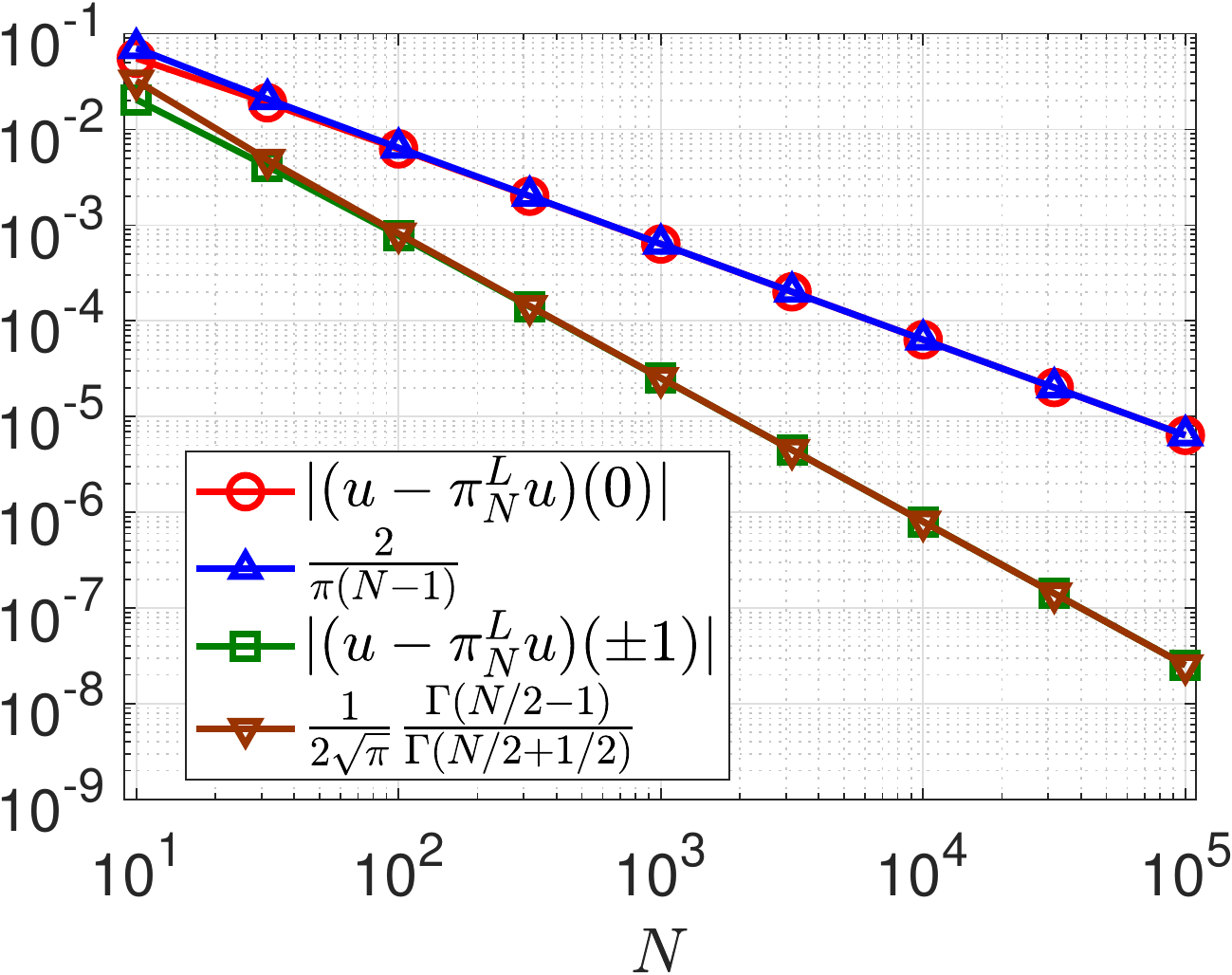}
		\caption{Left: $|(u -\pi_N^L  u)(x)|$  with $u(x)=|x|$ and different $N.$ Right: Tightness of the bounds in Theorem \ref{01estimate}. }
		\label{PoWisePosi}
	\end{center}
\end{figure}

Finally, we apply the main results to the example $u(x)=|x|^\mu$ with $\mu \in(k-1,k).$  As shown in \cite[Thm. 4.3]{Liu19MC-Optimal},
 $u,u',\ldots,u^{(k-1)}\in {\rm AC}(\bar\Omega),$ ${ }^{C}\! D_{0+}^{\mu} u \in  {\rm BV}([0, 1]),$
 and ${ }^{C}\! D_{0-}^{\mu} u \in  {\rm BV}([-1,0]).$ Thus, we infer from \eqref{converorg} that the expected convergence orders are
 $O(N^{-\mu+1/2})$ in $L^\infty$-norm and  $O(N^{-\mu})$ in $L^\infty_{\varpi}$-norm with $\varpi=(1-x^2)^{1/4}.$ Observe from
 the numerical results in Table \ref{Tab-1}  that the latter is optimal, but the former loses half order.
%
%
\begin{table}[!htbp]
 \centering
 \caption{Convergence order of Legendre expansion for $|x|^\mu$.} 
\small
 \begin{tabular}{|c|c|c|c|c|c|c|c|c|}
  \hline
  \multicolumn{1}{ |c  }{\multirow{2}{*}{$N$} } &
  \multicolumn{4}{ |c| }{Errors in $L^\infty$-norm} & \multicolumn{4}{ |c| }{Errors in $L^\infty_\varpi$-norm}      \\ \cline{2-9}
  \multicolumn{1}{ |c  }{}                        &
  \multicolumn{1}{ |c| }{$\mu=1.7$}  & order  & $\mu=2.6$  & order  &$ \mu=1.7$  & order & $\mu=2.6$  & order \\ \hline
$ 2^3 $ & 5.81e-03 &--& 2.35e-03 &-- &5.81e-03 &--& 2.22e-03 &--  \\
$ 2^4 $ & 2.03e-03 & 1.52 & 4.38e-04 & 2.42 & 2.03e-03 & 1.52 & 4.38e-04 & 2.34 \\
$ 2^5 $ & 6.72e-04 & 1.60 & 8.01e-05 & 2.45  & 6.72e-04 & 1.60 & 8.01e-05 & 2.45 \\
$ 2^6 $ & 2.15e-04 & 1.65 & 1.40e-05 & 2.52 & 2.15e-04 & 1.65 & 1.40e-05 & 2.52  \\
$ 2^7 $ & 6.74e-05 & 1.67 & 2.37e-06 & 2.56  & 6.74e-05 & 1.67 & 2.37e-06 & 2.56\\
$ 2^8 $ & 2.09e-05 & 1.69 & 3.97e-07 & 2.58 & 2.09e-05 & 1.69 & 3.97e-07 & 2.58\\
  \hline
 \end{tabular} \label{Tab-1}
\end{table}
\subsection{$L^2$-estimates}
As pointed out in \cite[Chap.\!\! 3]{Shen2011Book}, the estimate of the $L^2$-orthogonal projection is the starting point  to derive many other approximation results that  provide fundamental tools for error analysis of spectral and $hp$ methods  (see, e.g., \cite{Bernardi1997Book,Canuto2006Book,Guo1998Book,Hesthaven2007Book,Schwab1998Book,Shen2011Book}).  Most    estimates  therein are for functions in Sobolev or Besov spaces. Here, we consider functions with
 AC-BV regularity, thereby enriching the approximation theory.
%
%
%
%
%


We first highlight the fundamental importance of estimating $L^2$-orthogonal projection in \eqref{Cbexp}.
For $u\in H^1(\Omega),$ we define
\begin{equation}\label{H1proj}
(\pi_{N}^1 u)(x)= u(-1)+\int^x_{-1}\pi_{N-1}^L u'(t)\, \rd t\in {\mathcal P}_{\!N},
\end{equation}
where ${\mathcal P}_{\!N}$ denotes the set of  polynomials of degree at most $N.$ Note that for $N\ge 2,$ we have from the orthogonality of Legendre polynomials that
$$(\pi_{N}^1 u)(1)= u(-1)+\int^1_{-1}\pi_{N-1}^L u'(t)\,\rd t=  u(-1)+ \int^1_{-1} u'(t)\,\rd t=u(1).$$
Thus we have $(\pi_{N}^1 u)(\pm 1)=u(\pm 1).$ Moreover, one verifies readily that
$$\int^1_{-1}\big( \pi_{N}^1 u-u\big)'(x) \,v'(x)\,\rd x =0, \;\;\; \forall v\in \mathcal{P}_{\!N}^0:=\{v\in {\mathcal P}_{\!N} : v(\pm 1)=0 \}.$$
Therefore, \eqref{H1proj} defines the $H^{1}_0$-orthogonal projection. Note that
\begin{equation}\label{H1proj2}
\|(u-\pi_{N}^1 u)'\|_{L^{2}(\Omega)}^{2}=\|u'-\pi_{N-1}^Lu' \|_{L^{2}({\Omega})}^{2},
\end{equation}
so the $H^1$-estimate boils down to the estimate of the $L^2$-orthogonal projection. The high-order $H_0^m$-orthogonal projection is treated similarly in a recursive manner (see, e.g.,  \cite{Bernardi1997Book}). On the other hand, the analysis of Gauss-type interpolation and quadrature errors is also based upon the Legendre expansion (see \cite{Shen2011Book}).

With tools in Subsection \ref{sub31}, we can also derive  the following optimal  $L^2$-error bound under  the  AC-BV regularity of $u$, from which we can further establish many other approximation results indispensable for analysis of spectral and $hp$ methods for PDEs. Here, we omit such extensions.
\begin{thm}\label{newL2est} Assume the conditions in  Theorem {\rm \ref{TruncLeg}} hold.
\begin{itemize}
\item[(i)]
 For $\mu\in(k-1,k)$ and $ -1/2<\mu< N,$
		\begin{equation}\label{FracL2B}
		\begin{split}
		\|u-\pi_N^L  u\|_{L^2(\Omega)}\le \sqrt{\frac{2}{(2\mu+1) \pi}
		\frac{ \Gamma( {N-\mu})}{\Gamma( {N+\mu+1})}} \, U_\theta^{(\mu)}.
		\end{split}
		\end{equation}	
\item[(ii)] For $\mu=k,$
 the estimates  \eqref{FracL2B} hold,  with the total variation $V_{\bar\Omega}[u^{(k)}]$ is in place of $U_\theta^{(\mu)}.$
\end{itemize}
\end{thm}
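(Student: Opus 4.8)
The plan is to pass from the pointwise coefficient bound already established in the proof of Theorem~\ref{TruncLeg} to an $L^2$ tail estimate via Parseval's identity, and then to evaluate the resulting series of Gamma quotients \emph{exactly} by a telescoping trick, which is what yields the sharp constant. Since $\|P_n\|_{L^2(\Omega)}^2=2/(2n+1)$, orthogonality of the Legendre polynomials gives $\|u-\pi_N^L u\|_{L^2(\Omega)}^2=\sum_{n=N+1}^{\infty}\frac{2}{2n+1}\,|\hat u_n^L|^2$. Inserting the bound \eqref{HatUnCaseC0}, namely $|\hat u_n^L|\le \frac{(2n+1)\,\Gamma((n-\mu)/2)}{2^{\mu+2}\sqrt\pi\,\Gamma((n+\mu+3)/2)}\,U_\theta^{(\mu)}$, reduces the task to bounding the series $\sum_{n=N+1}^{\infty}(2n+1)\big(\Gamma((n-\mu)/2)/\Gamma((n+\mu+3)/2)\big)^2$. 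For part~(ii) one uses instead the coefficient identity of Corollary~\ref{sequal1}, so that $V_{\bar\Omega}[u^{(k)}]$ plays the role of $U_\theta^{(\mu)}$, and the rest of the argument is unchanged.

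To sum that series I would set $b_n:=\big(\Gamma((n-\mu)/2)/\Gamma((n+\mu+1)/2)\big)^2$ and use $z\Gamma(z)=\Gamma(z+1)$ together with the elementary identity $\big((n+\mu+1)/2\big)^2-\big((n-\mu)/2\big)^2=(2n+1)(2\mu+1)/4$ to obtain the telescoping relation
$$\frac{(2n+1)\,\Gamma((n-\mu)/2)^2}{\Gamma((n+\mu+3)/2)^2}=\frac{4}{2\mu+1}\,(b_n-b_{n+2}).$$
Summing over $n\ge N+1$ collapses the series to $\frac{4}{2\mu+1}(b_{N+1}+b_{N+2})$, and since $\Gamma(z)/\Gamma(z+\mu+1/2)$ is decreasing in $z$ whenever $\mu>-1/2$ (this is where that hypothesis enters, via the Gamma-ratio monotonicity already used in Theorem~\ref{TruncLeg}, cf.~\eqref{gammratio}), one has $b_{N+2}\le b_{N+1}$, so the tail is at most $\frac{8}{2\mu+1}\,b_{N+1}$ with $b_{N+1}=\Gamma((N+1-\mu)/2)^2/\Gamma((N+\mu+2)/2)^2$.

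It then remains to recast $b_{N+1}$ into the closed form appearing in \eqref{FracL2B}. Applying the Gamma duplication formula $\Gamma(2z)=\frac{2^{2z-1}}{\sqrt\pi}\Gamma(z)\Gamma(z+1/2)$ to $\Gamma(N-\mu)$ and $\Gamma(N+\mu+1)$, followed by one more use of the decreasing-ratio property, gives
$$2\,\frac{\Gamma(N-\mu)}{\Gamma(N+\mu+1)}=\frac{1}{2^{2\mu}}\,\frac{\Gamma((N-\mu)/2)\,\Gamma((N+1-\mu)/2)}{\Gamma((N+\mu+1)/2)\,\Gamma((N+\mu+2)/2)}\ \ge\ \frac{1}{2^{2\mu}}\,\frac{\Gamma((N+1-\mu)/2)^2}{\Gamma((N+\mu+2)/2)^2}=\frac{b_{N+1}}{2^{2\mu}}.$$
Collecting the constants ($2^{2\mu+4}\pi$ from squaring the coefficient bound, $8/(2\mu+1)$ from the telescoped sum, and $2^{2\mu}$ from the duplication step) produces exactly $\|u-\pi_N^L u\|_{L^2(\Omega)}^2\le \frac{2}{(2\mu+1)\pi}\,\frac{\Gamma(N-\mu)}{\Gamma(N+\mu+1)}\,(U_\theta^{(\mu)})^2$, and taking square roots yields \eqref{FracL2B}. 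The main obstacle is precisely this pair of Gamma-function manipulations — the exact telescoping of the series and its conversion through the duplication formula into the stated closed form; everything else is reuse of the coefficient identity of Theorem~\ref{IdentityForUn} (resp.\ Corollary~\ref{sequal1}) and of the monotonicity lemmas for Gamma ratios already in hand, so part~(ii) follows by repeating the argument verbatim.
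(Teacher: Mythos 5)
Your proposal is correct and follows essentially the same route as the paper: Parseval's identity plus the coefficient bound \eqref{HatUnCaseC0}, an exact telescoping evaluation of the resulting Gamma-quotient series, and the duplication formula \eqref{DuplicationFormula} together with the Gamma-ratio monotonicity \eqref{gammratio} to arrive at the stated closed form with the same constant, with part (ii) handled via Corollary \ref{sequal1} exactly as in the paper. The only difference is bookkeeping: the paper converts each term to whole-argument Gamma ratios (monotonicity plus duplication, cf. \eqref{FracL2-4}) before a one-step telescoping in \eqref{FracL2-5}, whereas you telescope the squared half-argument ratios $b_n$ exactly in steps of two and apply duplication and monotonicity only once to the boundary term $b_{N+1}$; both computations yield $\frac{2}{(2\mu+1)\pi}\frac{\Gamma(N-\mu)}{\Gamma(N+\mu+1)}$.
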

\begin{proof} Similar to   \eqref{Tsigmabnd}, we can use \eqref{gammratio} to show that
$$\frac{ \Gamma( ({n-\mu})/ 2)}{\Gamma( ({n+\mu+3})/2)}\le
	\frac{ \Gamma( ({n-\mu-1})/ 2)}{\Gamma( ({n+\mu})/2+1)}.$$
Then from \eqref{DuplicationFormula},	we derive
	\begin{equation}\begin{split}\label{FracL2-4}
	& \frac{ (n/2+1/4)\Gamma^2( ({n-\mu})/ 2)}{\Gamma^2( ({n+\mu+3})/2)} \le \frac{ \Gamma^2( ({n-\mu})/ 2)}{\Gamma( ({n+\mu+3})/2)\Gamma( ({n+\mu+1})/2)}\\
	&\qquad\qquad  \le \frac{ \Gamma( ({n-\mu})/ 2)\Gamma( ({n-\mu-1})/ 2)}{\Gamma( ({n+\mu})/2+1)\Gamma( ({n+\mu+1})/2)}=2^{2(\mu+1)}\frac{ \Gamma( {n-\mu-1})}{\Gamma( {n+\mu+1})}\\
&\qquad\qquad  =\frac{2^{2(\mu+1)}}{2\mu+1}
\bigg(\frac{ \Gamma( {n-\mu-1})}{\Gamma( {n+\mu})}-\frac{ \Gamma( {n-\mu})}{\Gamma( {n+\mu+1})}\bigg).
	\end{split}\end{equation}
Then, by the orthogonality of Legendre polynomials, we derive from
\eqref{HatUnCaseC0} and  \eqref{FracL2-4} that for $\mu> -1/2$,
	\begin{equation}\label{FracL2-5}
	\begin{split}
	\big\|u-\pi_N^{L} u\big\|_{L^2(\Omega)}^2=&\sum_{n=N+1}^{\infty}\frac{2}{2n+1}\big|\hat u_n^{L}\big|^2
	\le   \frac{(U_\theta^{(\mu)})^2}{2^{2\mu+3} \pi}\sum_{n=N+1}^{\infty}
	 \frac{ (2n+1)\Gamma^2( ({n-\mu})/ 2)}{\Gamma^2( ({n+\mu+3})/2)}\\
\le &   \frac{2(U_\theta^{(\mu)})^2}{(2\mu+1) \pi}\frac{ \Gamma( {N-\mu})}{\Gamma( {N+\mu+1})}.
	\end{split}
	\end{equation}

For $\mu=k,$  using the identity in Corollary \ref{sequal1} to derive the bound in \eqref{HatUnCaseC0}, and then following the same lines, we can obtain the estimates in (ii).
		\end{proof}

\section{Legendre expansion of functions with endpoint singularities}\label{sect4main}
\setcounter{equation}{0}
\setcounter{lmm}{0}
\setcounter{thm}{0}
\setcounter{cor}{0}


The aforementioned AC-BV framework and  main results can be extended to the study of  the end-point singularities, which typically occur in underlying solutions  of  PDEs  in various situations, for instance, irregular domains, singular coefficients and mismatch of boundary conditions among others. It is known that  the  Legendre expansion of a function with an endpoint singularity  has  a much higher convergence rate than that with an interior singularity of the same type.  We illustrate this through an example which also motivates the seemingly complicated extension. To fix the idea, we focus on the left endpoint singularity but the results can be extended to the right endpoint setting straightforwardly.
\subsection{An illustrative example} We consider $u(x)=(1+x)^\mu g(x)$ with $\mu\in (k-1,k), k\in\mathbb N$ and
 and a sufficiently smooth $g(x)$ on $\Omega.$ Then we can  write
\begin{equation}\label{newA}
u(x)=(1+x)^\mu g(x)=\sum_{m=0}^\infty\frac{g^{(m)}(-1)}{m!} (1+x)^{\mu+m}.
\end{equation}
Then by  \eqref{intformu},
\begin{equation}\label{Dcregul}
\begin{split}
({ }^{C}\! D_{-1+}^{\mu} u)(x)=\big(I_{-1+}^{k-\mu} u^{(k)}\big)(x) & = \sum_{m=0}^\infty\frac{\{\mu+m\}_{k}}{m!} g^{(m)}(-1) \,
I_{-1+}^{k-\mu}\big\{ (1+x)^{m+\mu-k}\big\}\\
& = \sum_{m=0}^\infty\frac{\{\mu+m\}_{k}\, \Gamma(m+\mu-k+1)}{(m!)^2} g^{(m)}(-1) \,
 (1+x)^{m},
\end{split}
\end{equation}
where  $\{a\}_k=a(a-1)\cdots (a-k+1)$ stands for the falling factorial. This implies  $({ }^{C}\! D_{-1+}^{\mu} u)(x)$ is sufficiently smooth. In particular, if $g=1,$ then $({ }^{C}\! D_{-1+}^{\mu} u)(x)$ is equal to a constant.

We deduce from  Theorem  \ref{IdentityForUn} with $\theta\to-1^+$ that  for   $u, u',\cdots, u^{(k-1)}\in {\rm AC}(\bar \Omega),$ and
$  { }^{C}\! D_{-1+}^{\mu}u  \in  {\rm BV}(\bar\Omega)$ with $\mu \in (k-1, k],$ we have
\begin{equation}\label{HatUnCaseC-3Old-1}
\begin{split}
\hat u_n^L = \frac{2n+1}{2} \Big\{
  ({{ }^{C}\! D_{-1+}^{\mu} u)(-1+)} (I_{1-}^{\mu+1} P_n)(-1)
+\int_{-1}^{1} (I_{1-}^{\mu+1} P_n)(x) \,  \rd \big\{{ }^{C}\! D_{-1+}^{\mu} u(x)\big\}\Big\}.
\end{split}
\end{equation}
In fact, we can  show   that $(I_{1-}^{\mu+1} P_n)(-1)\sim n^{-2(\mu+1)},$ so the first term decays like $O(n^{-2\mu-1})$ (which gives  the optimal convergence order for \eqref{newA} (see Table \ref{UnV2}),  and  doubles $O(n^{-\mu-1/2})$ for the interior singularity, e.g., of $|x|^\mu g(x)$).
\begin{lemma}\label{Pnfyi} For $n\ge \mu+1>0,$ we have
\begin{equation}\label{HatUnCaseC-3Old-1+1}
\begin{split}
(I_{1-}^{\mu+1} P_n)(-1)&=  \frac{(-1)^n2^{\mu+1}\Gamma(\mu+1)\sin((\mu+1)\pi)}{\pi} \frac{\Gamma(n-\mu)}{\Gamma(n+\mu+2)}.
\end{split}
\end{equation}
\end{lemma}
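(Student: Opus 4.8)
The plan is to evaluate the closed-form expression for $(I_{1-}^{\mu+1}P_n)(x)$ given in \eqref{twoGGFs} at the left endpoint $x=-1$. Recall from \eqref{twoGGFs} that
\[
(I_{1-}^{\mu+1} P_n)(x)= \frac{(1-x)^{\mu+1}}{\Gamma(\mu+2)} \frac{P_{n}^{(\mu+1, -\mu-1)}(x)}{P_{n}^{(\mu+1,-\mu-1)}(1)},
\]
so that at $x=-1$ the prefactor $(1-x)^{\mu+1}$ becomes $2^{\mu+1}$, and it remains only to compute the two values $P_{n}^{(\mu+1, -\mu-1)}(-1)$ and $P_{n}^{(\mu+1,-\mu-1)}(1)$.

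For the denominator I would use \eqref{Pnmumu}, namely $P_{n}^{(\mu+1,-\mu-1)}(1)=\Gamma(n+\mu+2)/(n!\,\Gamma(\mu+2))$. For the numerator I would invoke the hypergeometric representation \eqref{BoundIntLeg-2} of the generalised Jacobi polynomial with $\alpha=\mu+1$, $\beta=-\mu-1$: since the argument $(1+x)/2$ vanishes at $x=-1$ and ${}_2F_1(a,b;c;0)=1$, this gives $P_{n}^{(\mu+1,-\mu-1)}(-1)=(-1)^n (-\mu)_n/n!$, where $(-\mu)_n=\Gamma(n-\mu)/\Gamma(-\mu)$ is the (rising) Pochhammer symbol. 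Substituting both values and cancelling the common factors $n!$ and $\Gamma(\mu+2)$ yields
\[
(I_{1-}^{\mu+1} P_n)(-1)=(-1)^n\,2^{\mu+1}\,\frac{\Gamma(n-\mu)}{\Gamma(-\mu)\,\Gamma(n+\mu+2)}.
\]

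Finally I would rewrite $1/\Gamma(-\mu)$ by Euler's reflection formula $\Gamma(z)\Gamma(1-z)=\pi/\sin(\pi z)$ with $z=-\mu$, which gives $1/\Gamma(-\mu)=-\Gamma(\mu+1)\sin(\pi\mu)/\pi$, and then use $\sin((\mu+1)\pi)=-\sin(\pi\mu)$ to recast this as $\Gamma(\mu+1)\sin((\mu+1)\pi)/\pi$; inserting it into the displayed identity produces exactly \eqref{HatUnCaseC-3Old-1+1}. There is no genuine obstacle here: the calculation is entirely routine, and the only points deserving a word of care are that \eqref{BoundIntLeg-2} is being applied with the non-classical parameter $\beta=-\mu-1$ (legitimate under the generalised Jacobi polynomial convention of Szeg\"o used throughout, where these polynomials are defined by the terminating hypergeometric series for all real parameters), and the bookkeeping of signs arising from $(-1)^n$ and from the reflection formula.
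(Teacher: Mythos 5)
Your proof is correct and follows essentially the same route as the paper: evaluate \eqref{twoGGFs} at $x=-1$, use \eqref{Pnmumu} for the denominator, and finish with Euler's reflection formula \eqref{nonitA}. The only cosmetic difference is that you read off $P_n^{(\mu+1,-\mu-1)}(-1)$ directly from the hypergeometric representation \eqref{BoundIntLeg-2} at argument zero, whereas the paper first applies the parity relation $P_n^{(\alpha,\beta)}(-x)=(-1)^nP_n^{(\beta,\alpha)}(x)$ and then evaluates $P_n^{(-\mu-1,\mu+1)}(1)$; these two computations are equivalent.
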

\begin{proof}  By \eqref{twoGGFs}, we have
\begin{equation}\label{twoGGFs0}
\begin{split}
&(I_{1-}^{\mu+1} P_n)(-1)=
\frac{2^{\mu+1}}{\Gamma(\mu+2)} \frac{P_{n}^{(\mu+1, -\mu-1)}(-1)}{P_{n}^{(\mu+1,-\mu-1)}(1)}
=
\frac{(-1)^n 2^{\mu+1}}{\Gamma(\mu+2)} \frac{P_{n}^{(-\mu-1, \mu+1)}(1)}{P_{n}^{(\mu+1,-\mu-1)}(1)},
\end{split}
\end{equation}
where we used the parity $P_n^{(\alpha,\beta)}(-x)=(-1)^n P_{n}^{(\beta,\alpha)}(x)$ valid for all real parameters $\alpha,\beta$
(cf.\!   Szeg\"o  \cite[p.\! 64]{Szego1975Book}).  Then we derive \eqref{HatUnCaseC-3Old-1+1} from \eqref{Pnmumu} and \eqref{nonitA} immediately.
\end{proof}
%
%

We find from Lemma \ref{BoundIntLeg} that the second term in \eqref{HatUnCaseC-3Old-1} decays at a rate
$O(n^{-\mu-1/2}),$ if one naively works this out with this formula.  However, in view of \eqref{Dcregul},  we can continue to carry out integration by parts upon \eqref{HatUnCaseC-3Old-1} as many as times we want, until the first  boundary term in \eqref{HatUnCaseC-3Old-1} dominates the error.
 This produces the optimal order $O(n^{-2\mu-1})$ (see Table \ref{UnV2} for numerical illustrations).
\begin{table}[!htbp]
	\centering
	\caption{Decay rate of $|\hat u_n^{L}|$ with  $u= (x+1)^\mu \sin x$.}
\small
	\begin{tabular}{|c|c|c|c|c|c|c|}
		\hline
	$n$&{$\mu=0.1$}  & order
		&$ \mu=1.2$  & order  & $\mu=2.6$  & order
		\\
\hline
$2^3$&1.26e-02&--& 6.86e-04&--& 1.42e-04&--\\
$2^4$&5.59e-03&1.17& 6.59e-05&3.38& 1.35e-06&6.72\\
$2^5$&2.47e-03&1.18& 6.41e-06&3.36& 1.86e-08&6.18\\
$2^6$&1.08e-03&1.19& 6.20e-07&3.37& 2.60e-10&6.16\\
$2^7$&4.73e-04&1.19& 5.94e-08&3.38& 3.49e-12&6.22\\
\hline		
	\end{tabular} \label{UnV2}
\end{table}

\subsection{Approximation results for functions with endpoint singularities}
With the above understanding, we are now ready to present the identity on
 the Legendre expansion coefficient from \eqref{HatUnCaseC-3Old-1} and integration by parts. Given that  the function has more regularity in this case,  we make the following assumption.
\begin{defn}[{\bf Regularity Assumption}]\label{functionA-1} For $\mu \in (k-1, k]$ with $k\in \mathbb N,$  assume
 $u, \cdots, u^{(k-1)}\in {\rm AC}(\bar \Omega)$ and
$ v_\mu(x):={ }^{C}\! D_{-1+}^{\mu}u  \in  {\rm BV}(\bar\Omega).$  We further assume that $v_\mu, \cdots,v^{(m-1)}_\mu\in  {\rm AC}(\bar \Omega)$ and  $v^{(m)}_\mu\in {\rm BV}(\bar \Omega).$   Accordingly, we denote
 \begin{equation}\label{seminormF0-1}
  \begin{split}
  U^{(\mu,m)}_-:= V_{\bar\Omega}[v^{(m)}_\mu]  + \,|\sin(\mu\pi)| \sum_{l=0}^{m}\big| v^{(l)}_\mu (-1+) \big|.
  \end{split}
  \end{equation}
For simplicity, {\em we say $u$ is  of  {\rm AC-BV$_{\!\!\mu,m}$-regularity}.} \qed
\end{defn}

 Note that for the example \eqref{newA}, $v_\mu$ is sufficiently smooth so we have $m=\infty.$ Under this assumption, we can update
the formula  \eqref{HatUnCaseC-3Old-1} as follows.
\begin{thm}\label{Thm:52} Assume that $u$ is  of  {\rm AC-BV$_{\!\!\mu,m}$-regularity}.
Then  for $n\ge \mu+m+1,$ we have
		\begin{equation}\label{Unend-0}
		\begin{split}
		\hat u_n^{L}& = \frac{2n+1}{2} \bigg\{  \sum_{l=1}^{m}(I_{1-}^{\mu+l+1} P_n)(-1)v^{(l)}_\mu(-1+)+\int_{-1}^{1}(I_{1-}^{\mu+m+1} P_n)(x) \, {\rm d}\{ v^{(m)}_\mu(x)\}\bigg\},
		\end{split}
		\end{equation}
where $(I_{1-}^{\mu+l+1} P_n)(-1)$ has the explicit value given by \eqref{HatUnCaseC-3Old-1+1}.
	\end{thm}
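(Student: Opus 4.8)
The plan is to take the identity \eqref{HatUnCaseC-3Old-1} as the point of departure — this is Theorem \ref{IdentityForUn} specialised to $\theta\to -1^+$, where the integral over $(-1,\theta)$ and the boundary term involving $I_{-1+}^{\mu+1}P_n$ vanish in the limit because $(I_{-1+}^{\rho}P_n)(-1)=0$ for $\rho>0$ — and then to iterate integration by parts $m$ times on the remaining Riemann--Stieltjes term $\int_{-1}^{1}(I_{1-}^{\mu+1}P_n)(x)\,\rd\{v_\mu(x)\}$. Each step raises the order of the fractional integral of $P_n$ by one and lowers the order of the derivative of $v_\mu$ by one, using the successive regularities $v_\mu,\dots,v_\mu^{(m-1)}\in{\rm AC}(\bar\Omega)$ and, in the last step only, $v_\mu^{(m)}\in{\rm BV}(\bar\Omega)$.

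The engine of the recursion is the elementary differentiation rule
\[
\frac{\rd}{\rd x}(I_{1-}^{\rho+1} P_n)(x)=-(I_{1-}^{\rho} P_n)(x),\qquad \rho>0,
\]
obtained by differentiating the definition \eqref{leftintRL} under the integral sign, the contribution of the endpoint $y=x$ dropping because of the factor $(y-x)^{\rho}$; together with the fact that $(I_{1-}^{\rho}P_n)(1)=0$ for $\rho>0$, read off the factor $(1-x)^{\rho}$ in \eqref{twoGGFs}. With $v_\mu^{(0)}:=v_\mu$ and for $0\le l\le m-1$, since $v_\mu^{(l)}\in{\rm AC}(\bar\Omega)$ one has $\rd\{v_\mu^{(l)}\}=v_\mu^{(l+1)}\,\rd x$; writing $I_{1-}^{\mu+l+1}P_n=-\tfrac{\rd}{\rd x}(I_{1-}^{\mu+l+2}P_n)$ and applying the integration-by-parts rule \eqref{IPPW1122} then gives
\[
\int_{-1}^{1}(I_{1-}^{\mu+l+1} P_n)(x)\,\rd\{v_\mu^{(l)}(x)\}
= v_\mu^{(l+1)}(-1+)\,(I_{1-}^{\mu+l+2} P_n)(-1)
+\int_{-1}^{1}(I_{1-}^{\mu+l+2} P_n)(x)\,\rd\{v_\mu^{(l+1)}(x)\}.
\]
The boundary term at $x=1$ vanishes, but the one at $x=-1$ does not — this is the essential feature and the source of every ``endpoint'' contribution, in contrast with an ordinary integral where such a term would typically be killed. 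Telescoping this recursion from $l=0$ down to $l=m-1$ (the last step needing only $v_\mu^{(m)}\in{\rm BV}(\bar\Omega)$ for \eqref{IPPW1122} to apply), together with the $l=0$ boundary term already carried by \eqref{HatUnCaseC-3Old-1}, assembles the sum $\sum_{l=0}^{m}(I_{1-}^{\mu+l+1}P_n)(-1)\,v_\mu^{(l)}(-1+)$ plus the leftover $\int_{-1}^{1}(I_{1-}^{\mu+m+1}P_n)(x)\,\rd\{v_\mu^{(m)}(x)\}$; evaluating each $(I_{1-}^{\mu+l+1}P_n)(-1)$ by Lemma \ref{Pnfyi} with $\mu$ replaced by $\mu+l$ — licit precisely because $n\ge\mu+m+1$ makes the closed forms \eqref{twoGGFs}--\eqref{HatUnCaseC-3Old-1+1} valid for all $l\le m$ — yields the representation \eqref{Unend-0}.

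What needs care, though it is bookkeeping rather than a genuine obstacle, is the handling of one-sided limits at each Riemann--Stieltjes step: one must verify throughout that $I_{1-}^{\mu+l+1}P_n$, being the smooth power $(1-x)^{\mu+l+1}$ times a (generalised) Jacobi polynomial, lies in ${\rm BV}([-1,1])$ and vanishes at $x=1$, so that it is an admissible factor in \eqref{IPPW1122}; that the limits $v_\mu^{(l)}(-1+)$ exist ($v_\mu^{(l)}\in{\rm AC}(\bar\Omega)$ for $l\le m-1$, and $v_\mu^{(m)}\in{\rm BV}(\bar\Omega)$); and that passing from $\rd\{v_\mu^{(l)}\}=v_\mu^{(l+1)}\,\rd x$ to the Stieltjes form introduces no atom at $x=-1$. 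Finally, in the integer case $\mu=k$ all the coefficients $(I_{1-}^{k+l+1}P_n)(-1)$ vanish because $\sin((k+l+1)\pi)=0$ in \eqref{HatUnCaseC-3Old-1+1}, so the formula degenerates to the single Stieltjes integral, consistently with Corollary \ref{sequal1}.
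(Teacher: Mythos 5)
Your proposal follows essentially the same route as the paper's own proof: take \eqref{HatUnCaseC-3Old-1} as the starting point, use $v_\mu,\dots,v_\mu^{(m-1)}\in{\rm AC}(\bar\Omega)$ to integrate by parts $m$ times via $\frac{\rd}{\rd x}(I_{1-}^{\rho+1}P_n)=-(I_{1-}^{\rho}P_n)$ and the vanishing of $(I_{1-}^{\rho}P_n)$ at $x=1$ (read off \eqref{twoGGFs}), collect the boundary contributions at $x=-1$, and carry out the last step with the Riemann--Stieltjes formula \eqref{IPPW1122}, which needs only $v_\mu^{(m)}\in{\rm BV}(\bar\Omega)$. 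The paper's proof is exactly this telescoping, written more tersely; your explicit recursion, the verification that $I_{1-}^{\mu+l+1}P_n$ is an admissible factor in \eqref{IPPW1122}, and the integer-$\mu$ consistency check against Corollary \ref{sequal1} via \eqref{HatUnCaseC-3Old-1+1} are all correct.

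One discrepancy deserves note: your bookkeeping retains the $l=0$ boundary term $(I_{1-}^{\mu+1}P_n)(-1)\,v_\mu(-1+)$ inherited from \eqref{HatUnCaseC-3Old-1}, so you land on $\sum_{l=0}^{m}$ rather than the $\sum_{l=1}^{m}$ displayed in \eqref{Unend-0} (in the paper's proof this term silently disappears after the first line). Your version is the one the iteration actually produces, and it is the one consistent with the rest of the paper: the seminorm \eqref{seminormF0-1} and the estimates of Theorem \ref{TruncLegEnd} both involve $l=0,\dots,m$, and the example $u=(1+x)^\mu$ (i.e.\ $g\equiv 1$), for which $v_\mu\equiv\Gamma(\mu+1)$ is constant so that every term with $l\ge 1$ and the Stieltjes integral vanish, would give $\hat u_n^L=0$ without the $l=0$ term, contradicting the decay $O(n^{-2\mu-1})$ asserted right after \eqref{HatUnCaseC-3Old-1}; only the $l=0$ term reproduces it. So the mismatch is an indexing slip in the paper's statement, not a flaw in your argument.
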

	\begin{proof} Since  $v_\mu, \cdots,v^{(m-1)}_\mu\in {\rm AC}(\bar \Omega),$ we can conduct  integration by parts upon \eqref{HatUnCaseC-3Old-1}:
	\begin{equation*}\label{HatUnCaseC-3Old-10}
\begin{split}
\frac{2\,  \hat u_n^L} {2n+1} & =
 (I_{1-}^{\mu+1} P_n)(-1)\, v_\mu(-1)
+\int_{-1}^{1} (I_{1-}^{\mu+1} P_n)(x) \, v_\mu'(x)\,  \rd x\\
& =\cdots=\sum_{l=1}^{m-1}(I_{1-}^{\mu+l+1} P_n)(-1)v^{(l)}_\mu(-1)+\int_{-1}^{1}(I_{1-}^{\mu+m} P_n)(x) v^{(m)}_\mu(x)  \,  \rd x\\
& = \sum_{l=1}^{m}(I_{1-}^{\mu+l+1} P_n)(-1)v^{(l)}_\mu(-1+)+\int_{-1}^{1}(I_{1-}^{\mu+m+1} P_n)(x) \, {\rm d}\{ v^{(m)}_\mu(x)\},
\end{split}
\end{equation*}
where the boundary values at $x=1$ vanish in view of \eqref{twoGGFs}, and in the last step, we used the factor
$v^{(m)}_\mu\in {\rm BV}(\bar \Omega)$ and \eqref{IPPW1122}.
			\end{proof}

Comparing the formulas of $\hat u_n^L$ in Theorem \ref{IdentityForUn} (with $\theta\to -1^+$, i.e., \eqref{HatUnCaseC-3Old-1})  and Theorem \ref{Thm:52},
we find they  largely differ from the regularity index.  We can use Lemmas \ref{BoundIntLeg} and \ref{Pnfyi} to deal with the fractional integrals of the Legendre polynomial. Accordling,  we can follow the same lines as in the proofs of Theorems \ref{TruncLeg} and \ref{newL2est} to derive the following estimates. To avoid the repetition, we skill the proof, though there is  subtlety in some derivations.
\begin{thm}\label{TruncLegEnd} Assume that $u$ is  of   {\rm AC-BV$_{\!\!\mu,m}$-regularity}. Then  we have the following estimates.
%
	\begin{itemize}
\item[(i)] For $\mu>1/2$ and $N\ge \mu+m,$
		\begin{equation}\label{FracEndLinfA}
		\begin{split}
		\|u-\pi_N^L  u \|_{L^\infty (\Omega)}&\le \bigg\{\frac{1}{2^{\mu+m-1} (\mu+m-1/2)\sqrt{\pi}}\frac{ \Gamma( ({N-\mu-m+1})/ 2)}{\Gamma( ({N+\mu+m})/2)}\\
&\qquad  +\sum_{j=0}^{m}\frac{2^{\mu+j}\Gamma(\mu+j+1)}{\pi(\mu+j-1)}\frac{\Gamma(N-\mu-j+1)}{\Gamma(N+\mu+j+1)}
\bigg\}\, U^{(\mu,m)}_-.
		\end{split}
		\end{equation}
		\item[(ii)] For $ \mu>-1/2$ and $N> \mu+m,$
		\begin{equation}\label{FracEndL2B}
		\begin{split}
		\|u-\pi_N^L  u\|_{L^2(\Omega)}&\le\bigg\{ \frac{4}{(2\mu+2m+1) \pi}
		\frac{ \Gamma( {N-\mu-m})}{\Gamma( {N+\mu+m+1})}\\
&\qquad + \frac{2^{6\mu+8}\Gamma^2(\mu+1)}{\pi^2(4\mu+2)} \frac{(N+1)^2\,\Gamma(2N-2\mu+1)}{(2N+1)^2\,\Gamma(2N+2\mu+3)}
\bigg\}^{\frac 12}\, U^{(\mu,m)}_-.
		\end{split}
		\end{equation}
	\end{itemize}
\end{thm}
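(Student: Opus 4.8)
The plan is to transcribe the arguments of Theorems~\ref{TruncLeg} and~\ref{newL2est} to the endpoint setting, replacing the interior formula of Theorem~\ref{IdentityForUn} by the endpoint formula \eqref{Unend-0} of Theorem~\ref{Thm:52}. First I would bound $|\hat u_n^L|$ termwise: applying the triangle inequality to \eqref{Unend-0}, using Lemma~\ref{Pnfyi} (with the explicit value \eqref{HatUnCaseC-3Old-1+1} and the observation $|\sin((\mu+l+1)\pi)|=|\sin(\mu\pi)|$ for integer $l$) for the boundary terms $(I_{1-}^{\mu+l+1}P_n)(-1)\,v^{(l)}_\mu(-1+)$, and Lemma~\ref{BoundIntLeg} with $\mu\mapsto\mu+m$ together with the Riemann--Stieltjes bound \eqref{integralProp} for the remainder $\int_{-1}^1(I_{1-}^{\mu+m+1}P_n)(x)\,\mathrm d\{v^{(m)}_\mu(x)\}$. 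Since $U^{(\mu,m)}_-$ in \eqref{seminormF0-1} is exactly $V_{\bar\Omega}[v^{(m)}_\mu]+|\sin(\mu\pi)|\sum_l|v^{(l)}_\mu(-1+)|$, this produces the analogue of \eqref{HatUnCaseC0},
\[
|\hat u_n^L|\le\frac{2n+1}{2}\Big\{\frac{1}{2^{\mu+m+1}\sqrt\pi}\frac{\Gamma((n-\mu-m)/2)}{\Gamma((n+\mu+m+3)/2)}+\sum_{l}\frac{2^{\mu+l+1}\Gamma(\mu+l+1)}{\pi}\frac{\Gamma(n-\mu-l)}{\Gamma(n+\mu+l+2)}\Big\}U^{(\mu,m)}_-.
\]

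For part~(i) I would bound $\|u-\pi_N^L u\|_{L^\infty(\Omega)}\le\sum_{n>N}|\hat u_n^L|$ via $|P_n|\le 1$, and split the tail into the remainder contribution and the boundary contribution. The remainder contribution is handled verbatim as in the derivation of \eqref{FracLinfA00} around \eqref{FracLinfA-2}, with $\mu$ replaced by $\mu+m$: the telescoping through $\mathcal T_n^{\mu+m}$ and the Gautschi-type monotonicity \eqref{gammratio} yield precisely the first summand of \eqref{FracEndLinfA}. For the boundary contribution I would exploit the elementary identity
\[
\Big(n+\tfrac12\Big)\frac{\Gamma(n-\mu-l)}{\Gamma(n+\mu+l+2)}=\tfrac12\big(c_n^{(l)}+c_{n+1}^{(l)}\big),\qquad c_n^{(l)}:=\frac{\Gamma(n-\mu-l)}{\Gamma(n+\mu+l+1)},
\]
combined with $(2\mu+2l+1)\,\Gamma(n-\mu-l)/\Gamma(n+\mu+l+2)=c_n^{(l)}-c_{n+1}^{(l)}$ and one more telescoping of $\sum c_n^{(l)}$, so that $\sum_{n>N}(n+\tfrac12)\Gamma(n-\mu-l)/\Gamma(n+\mu+l+2)$ collapses (using \eqref{gammratio}) to a constant multiple of $\Gamma(N-\mu-l+1)/\Gamma(N+\mu+l+1)$; summing over $l$ and bounding $|\sin(\mu\pi)|\sum_l|v^{(l)}_\mu(-1+)|\le U^{(\mu,m)}_-$ yields the second summand of \eqref{FracEndLinfA}.

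For part~(ii) I would start from Parseval, $\|u-\pi_N^L u\|_{L^2(\Omega)}^2=\sum_{n>N}\tfrac{2}{2n+1}|\hat u_n^L|^2$, and use $(A+B)^2\le 2A^2+2B^2$ to separate the remainder and boundary squares (this factor $2$ is what turns the constant $2/((2\mu+1)\pi)$ of \eqref{FracL2-5} into the $4/((2\mu+2m+1)\pi)$ of \eqref{FracEndL2B}). The remainder square is the computation \eqref{FracL2-4}--\eqref{FracL2-5} with $\mu\mapsto\mu+m$, invoking the duplication formula \eqref{DuplicationFormula} and the telescoping there. For the boundary square I would dominate the $(m+1)$-term sum by its slowest-decaying term, convert $\Gamma(n-\mu-l)^2/\Gamma(n+\mu+l+2)^2$ into a ratio of single Gamma values through \eqref{DuplicationFormula}, and telescope once more to produce the $\Gamma(2N-2\mu+1)/\Gamma(2N+2\mu+3)$ factor in \eqref{FracEndL2B}. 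In both parts the case $\mu=k$ follows identically, starting from Corollary~\ref{sequal1} rather than Theorem~\ref{Thm:52}, exactly as in the proofs of Theorems~\ref{TruncLeg}--\ref{newL2est}.

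The routine part is the interior-singularity template with $\mu\rightsquigarrow\mu+m$. The delicate part—the ``subtlety'' the authors allude to—is the constant bookkeeping for the boundary sums: one must verify that the telescoping identities above collapse cleanly, that the monotonicity estimates \eqref{gammratio} and \eqref{NRatioGammaV2-1} apply for every admissible $n\ge\mu+m+1$ and every $0\le l\le m$, and, in the $L^2$ case, that the square of the $(m+1)$-term boundary sum is controlled uniformly in $n$, with the smallest index governing the $n$-decay while the largest index governs the $\Gamma(\mu+\cdot)$ prefactor. Once these are settled, the bounds \eqref{FracEndLinfA}--\eqref{FracEndL2B} follow.
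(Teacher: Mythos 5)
Your proposal follows essentially the same route as the paper's own (omitted) derivation: bound $|\hat u_n^L|$ from the endpoint identity \eqref{Unend-0} using Lemma \ref{Pnfyi} (with $|\sin((\mu+l+1)\pi)|=|\sin(\mu\pi)|$) for the boundary terms and Lemma \ref{BoundIntLeg} with $\mu\mapsto\mu+m$ plus \eqref{integralProp} for the Riemann--Stieltjes remainder, then telescope the tails with the Gamma-ratio monotonicity exactly as in Theorems \ref{TruncLeg} and \ref{newL2est}, and for the $L^2$ bound use Parseval, $(A+B)^2\le 2A^2+2B^2$, the duplication formula, and a uniform-in-$n$ control of the $(m+1)$-term boundary sum (which the paper makes explicit via a geometric-series bound of the ratio sum by $2(N+1)/(2N+1)$, the source of the $(N+1)^2/(2N+1)^2$ factor). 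This is correct and matches the paper's argument in all essentials.
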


In contrast to  the interior singularity with a half-order loss, the $L^\infty$-estimate in this case  is optimal. In fact, for the endpoint singularity, the largest error occurs near the boundary where $|P_n(x)|$ attend its maximum  at $x=\pm 1$ (see
Figure \ref{PnWithWeight} (left)), so  the direct summation in e.g., \eqref{FracLinfA-2} will not overestimate.
As an illustration, we consider $u(x)=(1+x)^\mu$ with $\mu\in (k-1,k), k\in \mathbb N$.  From  Theorem \ref{TruncLegEnd} and  \eqref{NRatioGammaV2-1},
we find $\|u-\pi_N^L u\|_{L^\infty (\Omega)}\le C N^{-2\mu}$ and $\|u-\pi_N^L u\|_{L^2 (\Omega)}\le C N^{-2\mu-1}.$
	We tabulate in Table \ref{LinfSetaV2} the errors and convergence order of  Legendre approximations to  $u(x)=(x+1)^\mu$ with various $\mu$, which indicate  the optimal convergence order as predicted.
\begin{table}[!htbp]
	\centering
	\caption{Convergence  of Legendre expansion of  $(1+x)^\mu$.} 
	\small
	\begin{tabular}{|c|c|c|c|c|c|c|c|c|}
		\hline
		\multicolumn{1}{ |c  }{\multirow{2}{*}{$N$} } &
		\multicolumn{4}{ |c| }{Errors in $L^\infty$-norm} & \multicolumn{4}{ |c| }{Errors in $L^2$-norm}      \\ \cline{2-9}
		\multicolumn{1}{ |c  }{}                        &
		\multicolumn{1}{ |c| }{$\mu=0.1$}  & order
		&$ \mu=1.2$  & order  &  $\mu=0.1$  & order
		&$ \mu=1.2$  & order \\ \hline
$ 2^3 $ &6.15e-01 &--& 2.27e-3 &--&8.82e-3 &-- & 2.32e-04 &--  \\
$ 2^4 $ &5.41e-01 & 0.18 & 4.87e-04 & 2.22  &4.11e-03 & 1.10 & 2.64e-05 & 3.14 \\
$ 2^5 $ &4.74e-01 & 0.19 & 9.87e-05 & 2.30  &1.85e-03 & 1.15 & 2.75e-06 & 3.26 \\
$ 2^6 $ &4.14e-01 & 0.20 & 1.94e-05 & 2.35  &8.22e-04 & 1.17 & 2.74e-07 & 3.33 \\
$ 2^7 $ &3.61e-01 & 0.20 & 3.74e-06 & 2.37  &3.61e-04 & 1.19 & 2.67e-08 & 3.36 \\
$ 2^8 $ &3.15e-01 & 0.20 & 7.15e-07 & 2.39  &1.58e-04 & 1.19 & 2.56e-09 & 3.38 \\
		\hline
	\end{tabular} \label{LinfSetaV2}
\end{table}

\subsection{Concluding remarks} We presented a new fractional Taylor formula for singular functions whose integer-order  derivatives
up to $k-1$ are absolutely continuous and Caputo fractional derivative of order $\mu\in (k-1,k]$  is of bounded variation. It could be viewed as an ``interpolation" between the  usual Taylor formulas of two consecutive integer orders.   We derived from this remarkable tool a similar fractional representation of the Legendre expansion of this type of functions, which became the cornerstone of the optimal  error estimates for  the Legendre orthogonal projection. The set of results under the fractional AC-BV framework greatly enriched the approximation theory for spectral and $hp$ methods. It set a good example to show how the fractional calculus could impact this classic field, and seamlessly bridge  between the results  valid only for integer cases.  Here we merely discussed the approximation results, but this will pave the way for the analysis of and applications to  singular problems, which  will be a topic  worthy of future deep investigation.

\begin{appendix}

\section{Useful properties of Gamma function}\label{Gmprop}
\renewcommand{\theequation}{A.\arabic{equation}}
\renewcommand{\thelmm}{A.\arabic{lmm}}
\setcounter{equation}{0}
\setcounter{lmm}{0}
%


Recall the Euler's reflection formula   (cf.\! \cite[Ch.2]{Andrews1999Book}):
\begin{equation}\label{nonitA}
\Gamma(1-a)\Gamma(a)=\frac{\pi} {\sin (\pi a)},\quad a\not=\pm 1,\pm 2,\cdots,
\end{equation}
and the Legendre duplication formula (cf.\! \cite[(5.5.5)]{Olver2010Book}):
	\begin{equation}\begin{split}\label{DuplicationFormula}
	\Gamma(2z)=\pi^{-1/2}2^{2z-1}\Gamma(z)\Gamma(z+1/2).
	\end{split}\end{equation}
%
%
From
	\cite[(1.1) and Thm. 10]{Alzer1997MC}, we have  that for $0\le a\le b$, the ratio
	\begin{equation}\label{gammratio}
	{\mathcal R}_b^a(z):=\frac{\Gamma(z + a)}{\Gamma(z + b)},\quad z\ge 0,
	\end{equation}
	is decreasing with respect to $z.$
On the other hand,   the ratio
	\begin{equation}\label{IneqtyLeg-1}
\widehat {\mathcal R}_c(z):=\frac{1}{\sqrt {z+c}}\frac{\Gamma(z+1)}{\Gamma(z+1/2)},
\end{equation}
	is increasing (resp. decreasing) on $[-1/2,\infty)$ (resp.  $(-c,\infty)$),  if $c\ge 1/2$ (resp. $c\le 1/4$),
based on   \cite[Corollary 2]{Bustoz1986MC}.

In the error bounds, the ratio of two Gamma functions appears very often, so the following inequality is useful.
\begin{lemma}\label{Gammalm} Let $b\in (a+m, a+m+1)$ for some  integer $m\ge 0,$ and
set  $b=a+m+\mu$ with $\mu\in (0,1).$
 Then for $z+a>0$ and $z+b>1, $ we have
\begin{equation}\label{NRatioGammaV2-1}
\frac 1 {(z+a)_{m}}\Big(z+b-\frac{3}{2}+\Big(\frac 5 4-\mu\Big)^{1 / 2}\Big)^{-\mu}< \frac{\Gamma(z+a)}{\Gamma(z+b)}< \frac 1 {(z+a)_{m}} \Big(z+b-\frac{\mu+1}{2}\Big)^{-\mu},
\end{equation}
where the Pochhammer symbol: $(c)_{m}=c(c+1)\cdots (c+m-1).$
\end{lemma}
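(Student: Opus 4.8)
The plan is to peel off the integer part $m$ by the Pochhammer identity and then invoke a Kershaw-type refinement of Gautschi's inequality for the ratio of two Gamma functions whose arguments differ by less than one. First, since $(z+a)_m\,\Gamma(z+a)=\Gamma(z+a+m)$, one has
\[
\frac{\Gamma(z+a)}{\Gamma(z+b)}=\frac{1}{(z+a)_m}\,\frac{\Gamma(w)}{\Gamma(w+\mu)},\qquad w:=z+a+m=z+b-\mu .
\]
The hypotheses $z+a>0$ and $z+b>1$ guarantee that every factor of $(z+a)_m$ is positive, that $w>0$, and that $w+\mu=z+b>1$. Moreover $w+\mu-\tfrac32+(\tfrac54-\mu)^{1/2}=z+b-\tfrac32+(\tfrac54-\mu)^{1/2}$ and $w+\tfrac{\mu-1}{2}=z+b-\tfrac{\mu+1}{2}$, so \eqref{NRatioGammaV2-1} is equivalent to the single inequality
\[
\Big(w+\mu-\tfrac32+\big(\tfrac54-\mu\big)^{1/2}\Big)^{-\mu}<\frac{\Gamma(w)}{\Gamma(w+\mu)}<\Big(w+\tfrac{\mu-1}{2}\Big)^{-\mu},\qquad w>0,\ w+\mu>1 .
\]

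Next I would apply the Kershaw-type double inequality (see, e.g., \cite{Bustoz1986MC,Alzer1997MC}): for $x>0$ and $s\in(0,1)$,
\[
\Big(x+\tfrac s2\Big)^{1-s}<\frac{\Gamma(x+1)}{\Gamma(x+s)}<\Big(x-\tfrac12+\big(s+\tfrac14\big)^{1/2}\Big)^{1-s}.
\]
Choosing $s:=1-\mu\in(0,1)$ and $x:=w+\mu-1=z+b-1>0$ gives $x+1=w+\mu$, $x+s=w$, $1-s=\mu$, and a direct simplification of the two endpoints yields
\[
\Big(w+\tfrac{\mu-1}{2}\Big)^{\mu}<\frac{\Gamma(w+\mu)}{\Gamma(w)}<\Big(w+\mu-\tfrac32+\big(\tfrac54-\mu\big)^{1/2}\Big)^{\mu}.
\]
Since all three quantities are positive (using $w+\mu>1$ and $\mu<1$ one checks $w+\tfrac{\mu-1}{2}>0$ and $w+\mu-\tfrac32+(\tfrac54-\mu)^{1/2}>0$), taking reciprocals gives exactly the equivalent inequality displayed above, hence \eqref{NRatioGammaV2-1}.

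The only point that needs care is the range of validity of the auxiliary inequality: several textbook statements of Kershaw's bounds assume $x\ge 1$, whereas here one only knows $x=z+b-1>0$. I would therefore either quote a version valid on the full half-line $x>0$, or supply a short self-contained argument in the spirit of \cite{Bustoz1986MC,Alzer1997MC}: set $\phi_c(x)=\log\Gamma(x+1)-\log\Gamma(x+s)-(1-s)\log(x+c)$, use the known monotonicity of such log-differences of the Gamma function to show that $\phi_c$ is monotone on $(0,\infty)$ with $\phi_c(x)\to0$ as $x\to\infty$, and then read off the sign of $\phi_c$ for the two admissible constants $c=s/2$ and $c=-\tfrac12+(s+\tfrac14)^{1/2}$. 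This monotonicity step, together with verifying that the chosen $c$ keep $x+c>0$ on the relevant range, is the main (and essentially the only) technical obstacle; the rest is the elementary bookkeeping above.
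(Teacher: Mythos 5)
Your proof is correct and follows essentially the same route as the paper: peel off the integer part via $(z+a)_m\Gamma(z+a)=\Gamma(z+a+m)=\Gamma(z+b-\mu)$ and then apply Kershaw's refinement of Gautschi's inequality with exactly the substitution $x=z+b-1$, $s=1-\mu$ (the paper uses the reciprocal form $\Gamma(x+\nu)/\Gamma(x+1)$ with $\nu=1-\mu$). Your worry about the range is unnecessary here, since Kershaw's inequality (1.3), as quoted in the paper, is valid for all $x>0$ and $s\in(0,1)$, which is precisely what the hypothesis $z+b>1$ provides.
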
	
\begin{proof} In fact,  \eqref{NRatioGammaV2-1} can be derived from the bounds in  \cite[(1.3)]{Kershaw83MC-Some}:
\begin{equation}\label{NRatioGamma-0}
\Big(x-\frac{1}{2}+\Big(\nu+\frac{1}{4}\Big)^{1 / 2}\Big)^{\nu-1}<\frac{\Gamma(x+\nu)}{\Gamma(x+1)}<\Big(x+\frac{\nu}{2}\Big)^{\nu-1},
\quad x> 0, \;\;  \nu \in (0,1).
\end{equation}
Indeed,  using the property  $\Gamma(z+1)=z\Gamma(z),$ we can write
\begin{equation*}
\frac{\Gamma(z+a)}{\Gamma(z+b)}= \frac 1 {(z+a)_{m}}\frac{\Gamma(z+a+m)}{\Gamma(z+b)}
= \frac 1 {(z+a)_{m}}\frac{\Gamma(z+b-\mu)}{\Gamma(z+b)}.
\end{equation*}
Then by  \eqref{NRatioGamma-0} with $x=z+b-1$ and $\nu=1-\mu,$  we obtain \eqref{NRatioGammaV2-1} immediately.
\end{proof}

\section{Proof of Lemma  \ref{FracIntPart-0} }\label{AppendixA}
\renewcommand{\theequation}{B.\arabic{equation}}

For $f\in L^1(\Omega)$ and $g\in {\rm AC}(\bar \Omega) $, changing the order of integration by the Fubini's Theorem, we derive from  \eqref{leftintRL}  that
\begin{equation*}\label{FracIntPart-3}
\begin{split}
& \int^{b}_{a}  f(x)  I_{a+}^{\rho} g'(x)\,{\rm d}x=\frac{1}{\Gamma(\rho)}\int_{a}^b  \bigg\{\int _{a}^x \frac{g'(y)}{(x-y)^{1-\rho}} {\rm d}y\bigg\}  f(x) \, {\rm d}x\\
&=\frac{1}{\Gamma(\rho)}\int_{a}^b  \bigg\{\int _{y}^b \frac{f(x)}{(x-y)^{1-\rho}} {\rm d}x\bigg\}  g'(y) \, {\rm d}y=\frac{1}{\Gamma(\rho)}\int_{a}^b  \bigg\{\int _{x}^b \frac{f(y)}{(y-x)^{1-\rho}} {\rm d}y\bigg\}  g'(x) \, {\rm d}x
\\
& =\int^{b}_{a} g'(x)\, I_{b-}^{\rho} f(x)\,{\rm d}x.
\end{split}
\end{equation*}
If $ I_{b-}^{\rho} f(x)\in {\rm BV}(\bar\Omega),$ we  derive from \eqref{IPPW1122} that
\begin{equation*}\label{FracIntPart-4}
\begin{split}
\int^{b}_{a}   f(x) & \,I_{a+}^{\rho} g'(x)\,{\rm d}x  =\int^{b}_{a} g'(x)\,  I_{b-}^{\rho} f(x)\,{\rm d}x = \big\{g(x)\,  I_{b-}^{\rho} f(x)\big\}\big|_{a^+}^{b^-} - \int^{b}_{a} g(x) 
\,{{\rm d}\big\{I_{b-}^{\rho} f(x)\big\}}. 
\end{split}
\end{equation*}
This yields \eqref{FracIntPart-1}.

We can derive  \eqref{FracIntPart-2} in a similar fashion.

\section{Proof of Proposition  \ref{boundaryvalue} }\label{AppendixA0}
\renewcommand{\theequation}{C.\arabic{equation}}

 Recall the first mean value theorem for the integral (cf.  \cite[p. 354]{Vladimir2016book}):  Let $f,g$ be Riemman integrable on $[c,d]$, $m=\inf\limits_{x\in [c,d]} f(x),$ and  $M=\sup\limits_{x\in [c,d]} f(x)$. If $g$ is nonnegative (or nonpositive) on $[c,d],$ then
	\begin{equation}\label{fgmeanv}
	\int_c^d f(x)g(x){\rm d}x=\kappa \int_c^d g(x){\rm d}x,\quad \kappa\in [m,M].
	\end{equation}
	Recall that (cf. \cite{Samko1993Book}):   for  $\alpha>-1$ and $\mu\in{\mathbb R}^+,$
	\begin{equation}\label{Lintformu}
	I_{a+}^\mu \, (x-a)^\alpha=  \dfrac{\Gamma(\alpha+1)}{\Gamma(\alpha+\mu+1)} (x-a)^{\alpha+\mu}.
	\end{equation}
	For any $x\in [a,a+\delta], $ we derive from \eqref{leftintRL}, \eqref{fgmeanv} and \eqref{Lintformu} that
	\begin{equation}\label{integbB}
	\begin{split}
	I_{a+}^\mu\, u (x) & =\frac 1 {\Gamma(\mu)}\int_{a}^x \frac{u(y)}{(x-y)^{1-\mu}} {\rm d}y= \frac{\kappa(x)}{\Gamma(\mu)} \int_{a}^x \frac{(y-a)^\alpha}{(x-y)^{1-\mu}} {\rm d}y\\
	&= \kappa(x){}_a I_{x}^\mu \, (x-a)^\alpha =\frac{  \Gamma(\alpha+1)\kappa(x)(x-a)^{\mu+\alpha}}{\Gamma(\alpha+\mu+1)} ,
	\end{split}\end{equation}
	where $\kappa(x)\in [m(x),M(x)]$, $m(x)=\inf\limits_{y\in [a,x]} v(y)$, $M(x)=\sup\limits_{y\in [a,x]} v(x)$. We know that
	\begin{equation}\label{integbC}
	\lim\limits_{x\rightarrow a^+}m(x)=v(a),\quad \lim\limits_{x\rightarrow a^+}M(x)=v(a)\Rightarrow \lim\limits_{x\rightarrow a^+}\kappa(x)=v(a).
	\end{equation}
	From \eqref{integbB} and \eqref{integbC}, we obtain \eqref{integbA}.
	This completes the proof.


\section{Proof of Theorem  \ref{01estimate} }\label{AppendixC}
\renewcommand{\theequation}{D.\arabic{equation}}
We start with the exact formula for the Legendre expansion coefficients  of $u(x)=|x|:$
\begin{equation}\label{un|x|}
\begin{split}
 \hat u_{2j}^L= \frac{(-1)^{j+1} (j+1/4) \Gamma(j-1/2)}{\sqrt{\pi}\, (j+1)!},\quad \hat u_{2j+1}^L=0,\quad j\ge 1,
\end{split}
\end{equation}
which can be derived from \eqref{HatUnCases1} with $k=1,$ i.e., 
\begin{equation*}
\begin{split}
\hat u_n^L&=\frac{2n+1}{2} \int^{1}_{-1}(I_{1-}^{2} P_n)(x)\, \rd \big\{ u^{(1)}(x)\big\}=\frac{2n+1}{2^{2}} (I_{1-}^{2} P_n)(0)\\
&
=\frac{2n+1}{2^{5}}
{}_2F_1\Big(\!\! -n+2, n+3;3;\frac{1} 2\Big),\quad n\ge 2,
\end{split}
\end{equation*}
and the value  at $z=1/2$ (cf. \cite[(15.4.28)]{Olver2010Book}): 
\begin{align*}\label{Fatzero}
{}_2F_1\Big(a,b;\frac{a+b+1}2;\frac 1 2\Big) =
\frac{\sqrt{\pi}\,\Gamma((a+b+1)/2)}{\Gamma((a+1)/2)\Gamma((b+1)/2)}.
\end{align*}
Then we obtain from \eqref{un|x|} that
\begin{equation}\label{un|x|-4}
\begin{split}
(u -\pi_N^L  u)(0)&=\sum_{j=\lceil \frac {N+1}2 \rceil}^\infty \hat u_{2j}^{L}P_{2j}(0)=- \frac{1 }{\pi}
\sum_{j=\lceil \frac{N+1} 2 \rceil}^\infty  \frac{(j+1/4)\Gamma(j-1/2)\Gamma(j+1/2)} {\Gamma(j+1)\Gamma(j+2)},
\end{split}
\end{equation}
where $\lceil \frac{N+1} 2 \rceil$ is the  smallest integer   $\ge \frac{N+1} 2,$ and we used the known value (cf.
\cite{Szego1975Book}):
\begin{equation*}\label{P2kvalue}
P_{2j}(0)={}_2F_1\Big(\!\! -2j, 2j+1;1;\frac{1} 2\Big)=(-1)^j\frac{\Gamma( j+1/2)}{\sqrt{\pi}\,j!}.
\end{equation*}
From \eqref{gammratio}, we have
\begin{equation}\label{un|x|-5}
\frac{\Gamma(j+1/2)}{\Gamma(j+1)}\le \frac{\Gamma(j)}{\Gamma(j+1/2)}.
\end{equation}
Thus, using \eqref{un|x|-5} and $\Gamma(z+1)=z\Gamma(z),$ we obtain
\begin{equation*}\label{un|x|-6}
\begin{split}
\frac{(j+1/4)\Gamma(j-1/2)\Gamma(j+1/2)} {\Gamma(j+1)\Gamma(j+2)}&=\frac{j+1/4}{j+1} \frac{\Gamma(j-1/2)}{\Gamma(j+1)}\frac{\Gamma(j+1/2)} {\Gamma(j+1)} \le \frac{\Gamma(j-1/2)\Gamma(j)} {\Gamma(j+1/2)\Gamma(j+1)}\\
&=\frac{1}{(j-1/2)j}\le \frac{1}{(j-1)j}=\frac{1}{j-1}-\frac{1}{j} .
\end{split}
\end{equation*}
Then
\begin{equation*}\label{un|x|-7}
\begin{split}
&\sum_{j=\lceil \frac{N+1} 2 \rceil}^\infty  \frac{(j+1/4)\Gamma(j-1/2)\Gamma(j+1/2)} {\Gamma(j+1)\Gamma(j+2)}
\le \sum_{j=\lceil \frac {N+1}2 \rceil}^\infty \Big(\frac{1}{j-1}-\frac{1}{j}\Big) =\frac{1}{\lceil \frac{N+1} 2 \rceil-1}\le \frac{2}{N-1}.
\end{split}
\end{equation*}
From \eqref{un|x|-4} and the above, we get the first result in \eqref{un|x|-1+1}.

We  now prove the second estimate in \eqref{un|x|-1+1}.
As $P_n(\pm1)=(\pm1)^n,$ we derive from \eqref{un|x|} that
\begin{equation}\label{un|x|-0}
\begin{split}
(u -\pi_N^L  u)(\pm 1)&=\sum_{j=\lceil \frac {N+1} 2 \rceil}^\infty \hat u_{2j}^{L}\,P_{2j}(\pm1)=
\sum_{j=\lceil \frac {N+1} 2 \rceil}^\infty  \frac{(-1)^{j+1}}{\sqrt{\pi}}\frac{(j+1/4)\Gamma(j-1/2)} {\Gamma(j+2)}.
\end{split}
\end{equation}
Denoting
\begin{equation*}
\begin{split}
{\mathcal S}_j:=  \frac{(-1)^{j+1}}{\sqrt{\pi}}\frac{(j+1/4)\Gamma(j-1/2)} {\Gamma(j+2)},\quad {\mathcal T}_j:=
\frac{1 }{2\sqrt{\pi}}\frac{\Gamma(j-3/2)} {\Gamma(j)},
\end{split}
\end{equation*}
we have
\begin{equation}\label{un|x|V2-1}
\begin{split}
{\mathcal S}_j+{\mathcal S}_{j+1}&=(-1)^{j+1}\frac{3 }{2\sqrt{\pi}}\frac{(j+3/4)\Gamma(j-1/2)} {\Gamma(j+3)} \le \frac{3 }{2\sqrt{\pi}}\frac{\Gamma(j-1/2)} {\Gamma(j+2)}\\
&\le  \frac{3 }{4\sqrt{\pi}}\Big(\frac{\Gamma(j-3/2)} {\Gamma(j+1)}+\frac{\Gamma(j-1/2)} {\Gamma(j+2)}\Big)= \big({\mathcal T}_j-{\mathcal T}_{j+1}\big)+
\big({\mathcal T}_{j+1}-{\mathcal T}_{j+2}\big),
\end{split}
\end{equation}
where we noted
\begin{equation*}
\begin{split}
 \frac{\Gamma(j-1/2)} {\Gamma(j+2)}\le  \frac{\Gamma(j-3/2)} {\Gamma(j+1)},
\end{split}
\end{equation*}
and
\begin{equation*}
\begin{split}
 \frac{3 }{4\sqrt{\pi}}\frac{\Gamma(j-3/2)} {\Gamma(j+1)}= \frac{1 }{2\sqrt{\pi}}\bigg(j \frac{\Gamma(j-3/2)} {\Gamma(j+1)}-(j-3/2)\frac{\Gamma(j-3/2)} {\Gamma(j+1)}\bigg)={\mathcal T}_j-{\mathcal T}_{j+1}.
\end{split}
\end{equation*}
Thus from \eqref{gammratio} and \eqref{un|x|-0}-\eqref{un|x|V2-1}, we obtain
\begin{equation*}\label{un|x|-2}
\begin{split}
\big|(u -\pi_N^L  u)(\pm 1)\big|&= \big(|{\mathcal S}_{\lceil\frac{N+1} 2 \rceil}+{\mathcal S}_{\lceil\frac{N+1} 2 \rceil+1}|\big)+\cdots
+\big(|{\mathcal S}_{\lceil\frac{N+1} 2 \rceil+2i}+{\mathcal S}_{\lceil\frac{N+1} 2 \rceil+2i+1}|\big)+\cdots\\
 &\le \big\{\big({\mathcal T}_{{\lceil\frac{N+1} 2 \rceil}}-{\mathcal T}_{{\lceil\frac{N+1} 2 \rceil}+1}\big)+
\big({\mathcal T}_{{\lceil\frac{N+1} 2 \rceil}+1}-{\mathcal T}_{{\lceil\frac{N+1} 2 \rceil}+2}\big)\big\}+\cdots\\
&\quad + \big\{\big({\mathcal T}_{{\lceil\frac{N+1} 2\rceil}+2i }-{\mathcal T}_{{\lceil\frac{N+1} 2 \rceil}+2i+1}\big)+
\big({\mathcal T}_{{\lceil\frac{N+1} 2 \rceil}+2i+1}-{\mathcal T}_{{\lceil\frac{N+1} 2 \rceil}+2i+2}\big)\big\}+\cdots\\
 &= \sum_{j=\lceil \frac{N+1} 2 \rceil}^\infty\big({\mathcal T}_j-{\mathcal T}_{j+1}\big)=\frac{1 }{2\sqrt{\pi}}\frac{\Gamma(\lceil \frac {N+1}2 \rceil-3/2)} {\Gamma(\lceil \frac{N+1}2 \rceil)}\le
\frac{1 }{2\sqrt{\pi}}\frac{\Gamma( N/2-1)} {\Gamma( N/2+1/2)}.
\end{split}
\end{equation*}
This ends the proof.

\end{appendix}


\begin{thebibliography}{}

\end{thebibliography}


\begin{thebibliography}{10}

\bibitem{Adams1975Book}
{R.A.} Adams.
\newblock {\em {Sobolev Spaces}}.
\newblock Academic Press, New York, 1975.

\bibitem{Alzer1997MC}
H.~Alzer.
\newblock On some inequalities for the {G}amma and {P}si functions.
\newblock {\em Math. Comput.}, 66(217):373--389, 1997.

\bibitem{Anastassiou2009Chaos-On}
G.A. Anastassiou.
\newblock On right fractional calculus.
\newblock {\em Chaos Solitons Fractals}, 42(1):365--376, 2009.

\bibitem{Andrews1999Book}
{G.E.} Andrews, R.~Askey, and R.~Roy.
\newblock {\em {Special Functions, Encyclopedia of Mathematics and its
  Applications, Vol. 71}}.
\newblock Cambridge University Press, Cambridge, 1999.

\bibitem{Antonov1981VLUM-An}
V.A. Antonov and K.V. Holsevnikov.
\newblock An estimate of the remainder in the expansion of the generating
  function for the {L}egendre polynomials ({G}eneralization and improvement of
  {B}ernstein's inequality).
\newblock {\em Vestn. Leningr. Univ. Math.}, 13:163--166, 1981.

\bibitem{Appell2014Book}
{J.} Appell, {J.} Bana\'{s}, and {N.} Merentes.
\newblock {\em {Bounded Variation and Around}}.
\newblock Walter de Gruyter, Berlin, 2014.

\bibitem{Babuska2000NM}
I.~Babu{\v{s}}ka and B.Q. Guo.
\newblock Optimal estimates for lower and upper bounds of approximation errors
  in the $p$-version of the finite element method in two dimensions.
\newblock {\em Numer. Math.}, 85(2):219--255, 2000.

\bibitem{Babuska2001SINUM}
I.~Babu\v{s}ka and {B.Q.} Guo.
\newblock {Direct and inverse approximation theorems for the $p$-version of the
  finite element method in the framework of weighted Besov spaces I:
  Approximability of functions in the weighted Besov spaces}.
\newblock {\em SIAM J. Numer. Anal.}, 39(5):1512--1538, 2001.

\bibitem{Babuska2002MMMAS}
I.~Babu\v{s}ka and {B.Q.} Guo.
\newblock {Direct and inverse approximation theorems for the {$p$}-version of
  the finite element method in the framework of weighted {B}esov spaces, {Part
  II}: {O}ptimal rate of convergence of the {$p$}-version finite element
  solutions}.
\newblock {\em Math. Models Methods Appl. Sci.}, 12(5):689--719, 2002.

\bibitem{Babuska19CMAME-Pointwise}
I.~Babu\v{s}ka and H.~Hakula.
\newblock Pointwise error estimate of the {L}egendre expansion: the known and
  unknown features.
\newblock {\em Comput. Methods Appl. Mech. Engrg.}, 345:748--773, 2019.

\bibitem{Bernardi1997Book}
{C.} Bernardi and {Y.} Maday.
\newblock Spectral {M}ethods.
\newblock In {P.G. Ciarlet} and {J.L. Lions}, editors, {\em Handbook of
  Numerical Analysis}, pages 209--486. Elsevier, Amsterdam, 1997.

\bibitem{Bourdin2015ADE}
L.~Bourdin and D.~Idczak.
\newblock A fractional fundamental lemma and a fractional integration by parts
  formula-applications to critical points of {Bolza} functionals and to linear
  boundary value problems.
\newblock {\em Adv. Differential Equations}, 20(3--4):213--232, 2015.

\bibitem{Brezis2011Book}
H.~Brezis.
\newblock {\em {F}unctional {A}nalysis, {S}obolev {S}paces and {P}artial
  {D}ifferential {E}quations}.
\newblock Springer, New York, 2011.

\bibitem{Bustoz1986MC}
J.~Bustoz and {M.E.H.} Ismail.
\newblock {On Gamma function inequalities}.
\newblock {\em Math. Comput.}, 47(176):659--667, 1986.

\bibitem{Buttazzo1998Book}
G.~Buttazzo, M.~Giaquinta, and S.~Hildebrandt.
\newblock {\em One-{Dimensional} {Variational} {Problems}: {An}
  {Introduction}}.
\newblock Oxford University Press, New York, 1998.

\bibitem{Canuto2006Book}
{C.} Canuto, {M.Y.} Hussaini, {A.} Quarteroni, and {T.A.} Zang.
\newblock {\em Spectral Methods: Fundamentals in Single Domains}.
\newblock Springer, Berlin, 2006.

\bibitem{Carter2000Book}
{M.} Carter and {B. van} Brunt.
\newblock {\em The {Lebesgue}-{Stieltjes} {Integral}: A Practical
  Introduction}.
\newblock Springer, New York, 2000.

\bibitem{Funaro1992Book}
D.~Funaro.
\newblock {\em Polynomial Approxiamtions of Differential Equations}.
\newblock Springer-Verlag, 1992.

\bibitem{Gui1986NM-Part1}
W.~Gui and I.~Babu\v{s}ka.
\newblock The {$h,\;p$} and {$h$}-{$p$} versions of the finite element method
  in {$1$} dimension. {I}. {T}he error analysis of the {$p$}-version.
\newblock {\em Numer. Math.}, 49(6):577--612, 1986.

\bibitem{Gui1986NM-Part2}
W.~Gui and I.~Babu\v{s}ka.
\newblock The {$h,\;p$} and {$h$}-{$p$} versions of the finite element method
  in {$1$} dimension. {II}. {T}he error analysis of the {$h$}- and {$h$}-{$p$}
  versions.
\newblock {\em Numer. Math.}, 49(6):613--657, 1986.

\bibitem{Gui1986NM-Part3}
W.~Gui and I.~Babu\v{s}ka.
\newblock The {$h,\;p$} and {$h$}-{$p$} versions of the finite element method
  in {$1$} dimension. {III}. {T}he adaptive {$h$}-{$p$} version.
\newblock {\em Numer. Math.}, 49(6):659--683, 1986.

\bibitem{Guo2013JAT-Direct}
{B.Q.} Guo and I.~Babu\v{s}ka.
\newblock Direct and inverse approximation theorems for the {$p$}-version of
  the finite element method in the framework of weighted {B}esov spaces, part
  {III}: {I}nverse approximation theorems.
\newblock {\em J. Approx. Theory}, 173:122--157, 2013.

\bibitem{Guo1998Book}
{B.Y.} Guo.
\newblock {\em Spectral Methods and Their Applications}.
\newblock World Scientific, Singapore, 1998.

\bibitem{Guo06JSC-Optimal}
{B.Y.} Guo, {J.} Shen, and {L.-L.} Wang.
\newblock Optimal spectral-{G}alerkin methods using generalized {J}acobi
  polynomials.
\newblock {\em J. Sci. Comput.}, 27:305--322, 2006.

\bibitem{Guo2004JAT}
B.Y. Guo and {L.-L.} Wang.
\newblock {Jacobi approximations in non-uniformly Jacobi-weighted Sobolev
  spaces}.
\newblock {\em J. Approx. Theory}, 128(1):1--41, 2004.

\bibitem{Hesthaven2007Book}
{J.} Hesthaven, {S.} Gottlieb, and {D.} Gottlieb.
\newblock {\em Spectral Methods for Time-Dependent Problems}.
\newblock Cambridge University Press, Cambridge, 2007.

\bibitem{Kershaw83MC-Some}
D.~Kershaw.
\newblock Some extensions of {W}. {G}autschi's inequalities for the {Gamma}
  function.
\newblock {\em Math. Comp.}, 41(164):607--611, 1983.

\bibitem{Klebaner2005Book}
{F.C.} Klebaner.
\newblock {\em {Introduction to Stochastic Calculus with Applications, 3nd
  Ed}}.
\newblock Imperial College Press, London, 2012.

\bibitem{Kolwankar1998PRL-Local}
K.M. Kolwankar and A.D. Gangal.
\newblock Local fractional {F}okker-{P}lanck equation.
\newblock {\em Phys. Rev. Lett.}, 80(2):214--217, 1998.

\bibitem{Lang1993Book}
{S.} Lang.
\newblock {\em {Real and Functional Analysis, 3rd Ed}}.
\newblock Springer, New York, 1993.

\bibitem{Liu20JAT-Asymptotics}
{W.J.} Liu and {L.-L.} Wang.
\newblock Asymptotics of the generalized {Gegenbauer} functions of fractional
  degree.
\newblock {\em J. Approx. Theory}, 253:105378, 2020.

\bibitem{Liu19MC-Optimal}
{W.J.} Liu, {L.-L.} Wang, and {H.Y.} Li.
\newblock Optimal error estimates for {C}hebyshev approximations of functions
  with limited regularity in fractional {S}obolev-type spaces.
\newblock {\em Math. Comp.}, 88(320):2857--2895, 2019.

\bibitem{Majidian2017ANM}
H.~Majidian.
\newblock {On the decay rate of Chebyshev coefficients}.
\newblock {\em Appl. Numer. Math.}, 113:44--53, 2017.

\bibitem{Olver2010Book}
{F.W.J.} Olver, {D.W.} Lozier, {R.F.} Boisvert, and {C.W.} Clark.
\newblock {\em {NIST Handbook of Mathematical Functions}}.
\newblock Cambridge University Press, New York, 2010.

\bibitem{Ponnusamy2012Book}
{S.} Ponnusamy.
\newblock {\em {Foundations of Mathematical Analysis}}.
\newblock Springer, New York, 2012.

\bibitem{Samko1993Book}
{S.G.} Samko, {A.A.} Kilbas, and {O.I.} Marichev.
\newblock {\em {Fractional Integrals and Derivatives, Theory and
  Applications}}.
\newblock Gordan and Breach Science Publisher, New York, 1993.

\bibitem{Schwab1998Book}
{C.} Schwab.
\newblock {\em {$p$- and $hp$-FEM. Theory and Application to Solid and Fluid
  Mechanics}}.
\newblock Oxford University Press, New York, 1998.

\bibitem{Shen2011Book}
{J.} Shen, {T.} Tang, and {L.-L.} Wang.
\newblock {\em Spectral Methods: Algorithms, Analysis and Applications}.
\newblock Springer-Verlag, New York, 2011.

\bibitem{Szego1975Book}
G.~Szeg\"o.
\newblock {\em Orthogonal Polynomials, 4th Ed}.
\newblock Amer. Math. Soc., Providence, RI, 1975.

\bibitem{Trefethen2008SIREV}
{L.N.} Trefethen.
\newblock Is {G}auss quadrature better than {C}lenshaw-{C}urtis?
\newblock {\em SIAM Rev.}, 50(1):67--87, 2008.

\bibitem{Trefethen2013Book}
{L.N.} Trefethen.
\newblock {\em {Approximation Theory and Approximation Practice}}.
\newblock SIAM, Philadelphia, 2013.

\bibitem{Wang18AML-new}
{H.Y.} Wang.
\newblock A new and sharper bound for {L}egendre expansion of differentiable
  functions.
\newblock {\em Appl. Math. Lett.}, 85:95--102, 2018.

\bibitem{Wang18JCAM-convergence}
{H.Y.} Wang.
\newblock On the convergence rate of {C}lenshaw-{C}urtis quadrature for
  integrals with algebraic endpoint singularities.
\newblock {\em J. Comput. Appl. Math.}, 333:87--98, 2018.

\bibitem{Wang20arXiv-How}
{H.Y.} Wang.
\newblock How fast does the best polynomial approximation converge than
  {Legendre} projection?
\newblock {\em arXiv:2001.01985v2}, 2020.

\bibitem{Xiang2012SINUM}
S.H. Xiang and F.~Bornemann.
\newblock {On the convergence rates of Gauss and Clenshaw-Curtis quadrature for
  functions of limited regularity}.
\newblock {\em SIAM J. Numer. Anal.}, 50(5):2581--2587, 2012.

\bibitem{Xiang20NM-Optimal}
{S.H.} Xiang and {G.D.} Liu.
\newblock Optimal decay rates on the asymptotics of orthogonal polynomial
  expansions for functions of limited regularities.
\newblock {\em Numer. Math.}, 145:117--148, 2020.

\bibitem{Vladimir2016book}
{V.A.} Zorich.
\newblock {\em {Mathematical Analysis I, 2nd Ed}}.
\newblock Springer-Verlag, Berlin, 2016.

\end{thebibliography}
 \end{document}